\numberwithin{equation}{section}
\newcommand{\BDC}{{\mathbf{D}}^{\mathrm{b}}}
\newcommand{\Mod}{\mathrm{Mod}}
\newcommand{\shom}{{\mathcal{H}}om}
\newcommand{\CC}{\mathbb{C}}
\newcommand{\RR}{\mathbb{R}}
\newcommand{\ZZ}{\mathbb{Z}}
\newcommand{\D}{\mathcal{D}}
\newcommand{\F}{\mathcal{F}}
\newcommand{\M}{\mathcal{M}}
\renewcommand{\(}{\left(}
\renewcommand{\)}{\right)}
\newcommand{\Image}{\operatorname{Im}}
\newcommand{\an}{{\rm an}}
\newcommand{\supp}{{\rm supp}}
\newcommand{\Perv}{{\rm Perv}}
\newcommand{\Sol}{{\rm Sol}}
\newcommand{\sub}{{\rm sub}}
\newcommand{\tl}[1]{\widetilde{#1}}
\newcommand{\simto}{\overset{\sim}{\longrightarrow}}
\newcommand{\op}{{\mbox{\scriptsize op}}}
\newcommand{\SO}{\mathcal{O}}
\newcommand{\SL}{\mathcal{L}}
\newcommand{\SF}{\mathcal{F}}
\newcommand{\SH}{\mathcal{H}}
\newcommand{\Modhol}{\mathrm{Mod}_{\mbox{\rm \scriptsize hol}}}
\newcommand{\Modrh}{\mathrm{Mod}_{\mbox{\rm \scriptsize rh}}}
\newcommand{\BDChol}{{\mathbf{D}}^{\mathrm{b}}_{\mbox{\rm \scriptsize hol}}}
\newcommand{\BDCrh}{{\mathbf{D}}^{\mathrm{b}}_{\mbox{\rm \scriptsize rh}}}
\newcommand{\DChol}{\mathbf{D}_{\mbox{\rm \scriptsize hol}}}
\newcommand{\rhom}{{\bfR}{\mathcal{H}}om}
\newcommand{\I}{{\rm I}}
\newcommand{\var}[1]{\overline{#1}}
\newcommand{\BEC}{{\mathbf{E}}^{\mathrm{b}}}
\newcommand{\ZEC}{{\mathbf{E}}^{\mathrm{0}}}
\newcommand{\T}{{\mathsf{T}}}
\newcommand{\bfR}{\mathbf{R}}
\newcommand{\bfD}{\mathbf{D}}
\newcommand{\rmE}{{\rm E}}
\newcommand{\rmD}{{\rm D}}
\newcommand{\bfE}{\mathbf{E}}
\newcommand{\sh}{{\rm sh}}
\newcommand{\Ish}{{\rm Ish}}
\newtheorem{theorem}{Theorem}[section]
\newtheorem{corollary}[theorem]{Corollary}
\newtheorem{lemma}[theorem]{Lemma}
\newtheorem{proposition}[theorem]{Proposition}
\newtheorem{notation}[theorem]{Notation}
\theoremstyle{definition}
\newtheorem{definition}[theorem]{Definition}
\theoremstyle{remark}
\newtheorem{remark}[theorem]{\sc Remark}
\title{Enhanced Perverse Subanalytic Sheaves\footnote{{\bf 2020 Mathematics 
Subject Classification: }18F10, 32C38, 35Q15, 32S60}
}
\author{ Yohei ITO
\footnote{Department of Mathematics, Faculty of Science Division II,
Tokyo University of Science, 1-3, Kagurazaka, Shinjuku-ku, Tokyo, 162-8601, Japan. E-mail: yitoh@rs.tus.ac.jp}}
\date{}
\begin{document}
\maketitle

\begin{flushright}
\textit{Dedicated to Professor Toshiyuki Kobayashi,\\
with gratitude and respect}
\end{flushright}


\begin{abstract}
In \cite{Ito24a}, the author explained a relation between enhanced ind-sheaves and enhanced subanalytic sheaves.
In particular, a relation between \cite[Thm.\:9.5.3]{DK16} and \cite[Thm.\:6.3]{Kas16} had been explained.
Moreover, in \cite{Ito24b},
the author defined $\CC$-constructibility for enhanced subanalytic sheaves
and proved that there exists an equivalence of categories
between the triangulated category of holonomic $\D$-modules and
that of $\CC$-constructible enhanced subanalytic sheaves.
In this paper, we will show that there exists a t-structure
on the triangulated category of $\CC$-constructible enhanced subanalytic sheaves 
whose heart is equivalent to the abelian category of holonomic $\D$-modules.
Furthermore, we shall consider simple objects of its heart and minimal extensions of objects of its heart.
\end{abstract}

\section{Introduction}
\subsection{Riemann--Hilbert Problem}
The original Riemann--Hilbert problem asks for the existence of a linear ordinary differential equation with regular singularities and a given monodromy on a curve.

In \cite{Del}, P.\:Deligne formulated it as a correspondence between
meromorphic connections on a complex manifold $X$ with regular singularities along a hypersurface $Y$
and local systems on $X\setminus Y$.

Moreover, M.\:Kashiwara extended Deligne's correspondence as a correspondence
between regular holonomic $\D_X$-modules and $\CC$-constructible sheaves on $X$ as below.

\subsection{Regular Riemann--Hilbert Correspondence}
Let $X$ be a complex manifold and $\D_X$ the sheaf of rings of holomorphic differential operators on $X$.

Let us denote by $\SO_X$ the sheaf of rings of holomorphic functions on $X$
and denote by $\rhom_{\D_X}(\cdot, \cdot)$ the right derived functor
of the hom-functor $\shom_{\D_X}(\cdot, \cdot)$.
Moreover, we denote by $\BDCrh(\D_X)$ the triangulated category of regular holonomic $\D_X$-modules
and by $\BDC_{\CC\mbox{\scriptsize -}c}(\CC_X)$ that of $\CC$-constructible sheaves on $X$.
Then we have an equivalence of categories which is called
the regular Riemann--Hilbert correspondence or Riemann--Hilbert correspondence for regular holonomic $\D$-modules.
\begin{theorem}[{\cite[Main Theorem]{Kas84}}]\label{thm_regRH}
There exists an equivalence of categories:
$$\Sol_X\colon \BDCrh(\D_X)^\op\simto \BDC_{\CC\mbox{\scriptsize -}c}(\CC_X),\
\M\mapsto 
\rhom_{\D_X}(\M, \SO_X).$$
\end{theorem}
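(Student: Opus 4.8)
The plan is to follow the standard three-step strategy. \emph{First}, one checks that $\Sol_X$ is well defined, i.e.\ that it sends $\BDCrh(\D_X)$ into $\BDC_{\CC\mbox{\scriptsize -}c}(\CC_X)$. The key input is the microlocal estimate $\ms{\Sol_X(\M)}\subseteq\mathrm{Char}(\M)$, valid for any coherent $\D_X$-module $\M$: when $\M$ is holonomic the characteristic variety is a conic Lagrangian, and a complex whose micro-support lies in such a set has $\CC$-constructible cohomology. (Regularity plays no role here.) It is convenient to record, for later use, the compatibility of $\Sol_X$ with proper direct images and with non-characteristic inverse images, and the relation $\Sol_X(\M)\simeq\DD(\DR_X(\M))[\dim X]$ on holonomic modules, where $\DR_X$ is the de Rham functor and $\DD$ Verdier duality.

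\emph{Second}, one constructs a quasi-inverse by induction on the dimension $d$ of the support, the engine being Deligne's Riemann--Hilbert correspondence for regular meromorphic connections (see the discussion above) together with Hironaka's resolution of singularities. Given $F\in\BDC_{\CC\mbox{\scriptsize -}c}(\CC_X)$, choose a stratification adapted to $F$ with top stratum $U$ smooth of dimension $d$; the restriction $F|_U$ is then a complex with locally constant cohomology, each cohomology sheaf corresponding by Deligne to a regular integrable connection on $U$; after resolving the boundary divisor one forms the Deligne--Malgrange extension and then the minimal extension, a regular holonomic module near $\overline U$, and glues it with the module supplied for the lower-dimensional part by the recollement for regular holonomic $\D$-modules, matched against the gluing triangle on the constructible side. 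At each stage one must check that the operations used — proper pushforward, non-characteristic pullback, minimal extension, gluing — preserve regular holonomicity; this invokes the stability properties of $\BDCrh(\D_X)$ and the curve-testing criterion for regularity (a holonomic module is regular iff its derived pullback to every curve is).

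\emph{Third}, full faithfulness amounts to the reconstruction isomorphism $\rHom_{\D_X}(\M,\N)\simeq\rHom_{\CC_X}(\Sol_X(\N),\Sol_X(\M))$ for $\M,\N\in\BDCrh(\D_X)$. Using the functorialities recorded in the first step (via $X\hookrightarrow X\times\CC$ and the behaviour of $\Sol_X$ under the standard operations) this reduces to the case $\N=\SO_X$, and then, through the Verdier-duality relation above, to the full faithfulness of $\DR_X$, whose base case — regular integrable connections on a smooth variety — is the Grothendieck--Deligne comparison theorem between algebraic and analytic de Rham cohomology. Essential surjectivity follows because every object of $\BDC_{\CC\mbox{\scriptsize -}c}(\CC_X)$ is built by finitely many distinguished triangles from shifted constant sheaves on smooth locally closed subvarieties, each lying in the image by the second step. (One should also note that $\Sol_X$, while not t-exact for the standard t-structures, interchanges the natural t-structure on $\BDCrh(\D_X)$ with the perverse t-structure on $\BDC_{\CC\mbox{\scriptsize -}c}(\CC_X)$, contravariantly and up to shift, so that simple holonomic modules correspond to intersection-cohomology-type complexes — the form most relevant to the present paper.)

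The hard part is the second step: there is no formal construction of the quasi-inverse, because regularity must be propagated through resolution of singularities and through all the gluing data, and one must simultaneously control t-exactness so that minimal extensions match on the two sides. In the normal crossing case this is exactly Deligne's theory; reducing the general case to it by the above induction, together with the constructibility of the first step and the comparison theorem of the third, is what completes the proof.
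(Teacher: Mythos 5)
The paper offers no proof of this statement at all: it is quoted as background from Kashiwara's 1984 paper, so the only meaningful comparison is with the argument in the cited literature. Your outline is a faithful sketch of one of the two classical routes, namely the one that reduces to Deligne's correspondence for regular meromorphic connections by Hironaka resolution and induction on the dimension of the support (essentially the proof in Bernstein's notes and in Hotta--Takeuchi--Tanisaki, and close in spirit to Mebkhout's). Kashiwara's own argument, the one this paper cites, is genuinely different: instead of building a quasi-inverse stratum by stratum, he constructs it globally and explicitly as the temperate cohomology functor $F\mapsto \mathrm{THom}(F,\SO_X)$ and proves directly that it is quasi-inverse to $\Sol_X$; the constructibility of $\Sol_X(\M)$ for holonomic $\M$ via $\ms{\Sol_X(\M)}\subset\mathrm{Char}(\M)$ (your first step, where you are right that regularity is irrelevant) is common to both. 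What your route buys is conceptual transparency and a clean reduction to the normal-crossing case; what Kashiwara's buys is a functorial inverse, which is precisely the prototype of the temperate/subanalytic constructions ($\mathrm{THom}$, ind-sheaves, enhanced subanalytic sheaves) on which the rest of the present paper is built.

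Two small corrections if you write the sketch out, plus the obvious caveat. With the usual normalization of Verdier duality the relation is $\Sol_X(\M)\simeq \DD\bigl(\DR_X(\M)\bigr)[-d_X]$ rather than $[+\dim X]$ (the sign depends on where you put the dualizing shift, so fix conventions explicitly). The reduction of full faithfulness to the case $\N=\SO_X$ is carried out with the diagonal embedding $X\hookrightarrow X\times X$ together with external tensor products (the graph trick), not with $X\hookrightarrow X\times\CC$. Finally, as you yourself acknowledge, your second step is not a proof but a program: propagating regularity through resolution, canonical extensions, minimal extensions and the recollement, and matching t-exactness on both sides, is exactly the technical content supplied by Kashiwara's and Mebkhout's published arguments, so as it stands the proposal is an accurate roadmap rather than a complete proof.
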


After the appearance of the regular Riemann--Hilbert correspondence,
A.\:Beilinson and J.\:Bernstein developed systematically
a theory of regular holonomic $\D$-modules on smooth algebraic varieties
over the complex number field $\CC$ and
obtained an algebraic version of the regular Riemann--Hilbert correspondence
which is called the algebraic regular Riemann--Hilbert correspondence.
See \cite{Be, Bor} and also \cite{Sai} for the details.

\subsection{Perverse Sheaves}
On the other hand, motivated by applications for arithmetic geometry,
A.\:A.\:Beilinson, J.\:Bernstein and P.\:Deligne introduced the notions
of perverse t-structures and perverse sheaves \cite{BBD}.
Note that the triangulated category $\BDC_{\CC\mbox{\scriptsize -}c}(\CC_X)$ has the perverse t-structure
$\big({}^p\bfD^{\leq0}_{\CC\mbox{\scriptsize -}c}(\CC_X),
{}^p\bfD^{\geq0}_{\CC\mbox{\scriptsize -}c}(\CC_X)\big).$
Let us denote by $$\Perv(\CC_X) :=
{}^p\bfD^{\leq0}_{\CC\mbox{\scriptsize -}c}(\CC_X)\cap {}^p\bfD^{\geq0}_{\CC\mbox{\scriptsize -}c}(\CC_X)$$ its heart
and call an object of $\Perv(\CC_X)$ a perverse sheaf.
Surprisingly, the functor $\Sol_X$ induces an equivalence of categories between
the abelian category $\Modrh(\D_X)$ of regular holonomic $\D_X$-modules
and $\Perv(\CC_X)$:
$$\Sol_X(\cdot)[d_X]\colon \Modrh(\D_X)^\op\simto \Perv(\CC_X).$$
Here, $d_X$ is the complex dimension of $X$.
See \cite[Thm.\:4.1]{Kas75} (also \cite[Thm.\:3.5.1]{Bjo93}) for the details of the analytic case,
\cite[Thm.\:22.4]{Bor} for the details of the algebraic case.

By using this equivalence of categories,
Brylinski--Kashiwara\cite{BK} and Beilinson--Bernstein\cite{BB} independently solved 
the Kazhdan--Lusztig conjecture,
which was a great breakthrough in representation theory. 

\subsection{Irregular Riemann--Hilbert Correspondence and Enhanced Ind-Sheaves}
The problem of extending
the Riemann--Hilbert correspondence to cover
the case of holonomic $\D$-modules with irregular singularities
had been open for 30 years.
After a groundbreaking development in the theory of irregular meromorphic connections by 
K.\:S.\:Kedlaya \cite{Ked10, Ked11} and T.\:Mochizuki \cite{Mochi09, Mochi11},
A.\:D'Agnolo and M.\:Kashiwara established the Riemann--Hilbert correspondence
for irregular holonomic $\D$-modules in \cite{DK16} as follows.

For this purpose, they introduced enhanced ind-sheaves
extending the notion of ind-sheaves introduced by M.\:Kashiwara and P.\:Schapira in \cite{KS01}.
Let us denote by $\BDChol(\D_{X})$ the triangulated category of
holonomic $\D_X$-modules and by $\BEC_{\RR\mbox{\scriptsize -}c}(\I\CC_X)$
that of $\RR$-constructible enhanced ind-sheaves on $X$.
See \cite[Def.\:4.9.2]{DK16} for the definition of $\RR$-constructible enhanced ind-sheaves.
Then we have:

\begin{theorem}[{\cite[Thm.\:9.5.3, Prop.\:9.5.4]{DK16}}]\label{irregRH_DK}
There exists a fully faithful functor:
\[\Sol_X^{\rmE} \colon \BDChol(\D_X)^{\op}\hookrightarrow\BEC_{\RR\mbox{\scriptsize -}c}(\I\CC_X)\]
and the following diagram is commutative:
\[\xymatrix@R=25pt@C=60pt@M=7pt{
\BDChol(\D_X)^{\op}\ar@{^{(}->}[r]^-{\Sol_X^{\rmE}}
 & \BEC_{\RR\mbox{\scriptsize -}c}(\I\CC_X)\\
\BDCrh(\D_X)^{\op}\ar@{->}[r]_-{\Sol_X}^-{\sim}\ar@{}[u]|-{\bigcup}
&\BDC_{\CC\mbox{\scriptsize -}c}(\CC_X).
\ar@{^{(}->}[u]_-{e_X\circ \iota_X}
}\]
\end{theorem}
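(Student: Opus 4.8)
The plan is to build the functor $\Sol_X^{\rmE}$ out of the enhanced ind-sheaf $\SO_X^{\rmE}$ of (tempered, exponentially twisted) holomorphic functions, setting $\Sol_X^{\rmE}(\M) := \rhom_{\D_X}(\M, \SO_X^{\rmE})$, and then to establish the two assertions — that this functor takes values in $\BEC_{\RR\mbox{\scriptsize -}c}(\I\CC_X)$, and that it is fully faithful — by a \emph{d\'evissage to a normal form}. The one deep geometric ingredient is the Mochizuki--Kedlaya theorem on the existence of good formal structures (resolution of turning points): after a ramified covering followed by a finite sequence of blow-ups $\pi \colon \tl X \to X$ along the singular locus, the pullback $\pi^{*}\M$ is, locally along the exceptional normal crossing divisor $D$, a finite direct sum $\bigoplus_i \bigl( \E^{\varphi_i} \otimes \R_i \bigr)$ with each $\R_i$ regular holonomic and each $\varphi_i$ a meromorphic function with poles contained in $D$. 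The real work is to propagate the two assertions from such a normal form back to $\M$ through this diagram.

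For $\RR$-constructibility I would argue locally, near the singular support. On a normal form one computes $\Sol_{\tl X}^{\rmE}$ explicitly: it is assembled from the ordinary solution complexes $\Sol_{\tl X}(\R_i)$ — which are $\CC$-constructible by Theorem~\ref{thm_regRH} — twisted by exponential enhanced ind-sheaves $\EE^{\Re\varphi_i}$ attached to the $\varphi_i$, all of which are $\RR$-constructible; hence $\Sol_{\tl X}^{\rmE}(\pi^{*}\M) \in \BEC_{\RR\mbox{\scriptsize -}c}(\I\CC_{\tl X})$. One then transports this down to $X$ along $\pi$ and the covering map, using the stability of $\RR$-constructibility under the (proper) direct and inverse image operations for enhanced ind-sheaves together with the compatibility of $\Sol^{\rmE}$ with the pushforward of $\D$-modules.

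Full faithfulness I would derive from a \emph{reconstruction theorem}: for holonomic $\M$ there should be a natural isomorphism recovering $\M$ from $\Sol_X^{\rmE}(\M)$ — roughly, $\M \simto$ a cohomology in the appropriate degree of $\rhom^{\rmE}\bigl( \Sol_X^{\rmE}(\M), \SO_X^{\rmE} \bigr)$, up to the duality functor and a temperization. Granting this, the full faithfulness of $\Sol_X^{\rmE}$ is formal: the natural map $\Hom_{\BDChol(\D_X)}(\M, \N) \to \Hom\bigl( \Sol_X^{\rmE}(\N), \Sol_X^{\rmE}(\M) \bigr)$ is inverted by applying the reconstruction functor and then the reconstruction isomorphisms for $\M$ and $\N$. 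The reconstruction theorem itself is again proved by the above d\'evissage: reduce to a normal form, where it follows from the regular reconstruction theorem (the reconstruction half of Theorem~\ref{thm_regRH}) once one checks that the functor $\rhom^{\rmE}(-, \SO_X^{\rmE})$ \emph{absorbs} the exponential twists $\EE^{\Re\varphi_i}$, i.e.\ that the enhanced solution complex of $\E^{\varphi} \otimes \R$ encodes exactly $\varphi$ together with $\Sol(\R)$ and nothing further.

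Finally, the commutativity of the square reduces to the observation that for a \emph{regular} holonomic $\M$ no exponential factor occurs, so that $\Sol_X^{\rmE}(\M) \cong (e_X \circ \iota_X)\bigl( \Sol_X(\M) \bigr)$; this rests in turn on the fact that the tempered holomorphic solutions of a regular holonomic $\D_X$-module coincide with its ordinary holomorphic solutions, together with the comparison between $\SO_X^{\rmE}$ and $\SO_X$ on the regular part. \textbf{The hardest part} is the interaction between the Mochizuki--Kedlaya resolution and the bookkeeping it forces: tracking how the enhanced operations behave under ramified coverings and iterated blow-ups, both in the $\RR$-constructibility step and — more delicately — in the reconstruction step, where one must also pin down the precise inverse functor (with its duality and temperization) so that the formal deduction of full faithfulness genuinely closes up.
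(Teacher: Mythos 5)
This statement is not proved in the paper at all: it is quoted as background directly from D'Agnolo--Kashiwara \cite[Thm.\:9.5.3, Prop.\:9.5.4]{DK16}, so there is no internal proof to compare your attempt against. Measured against the source it cites, your outline does reproduce the actual strategy of \cite{DK16}: $\Sol_X^{\rmE}(\M)=\rhom_{\D_X}(\M,\SO_X^{\rmE})$ with $\SO_X^{\rmE}$ the enhanced (tempered, exponentially twisted) holomorphic functions, d\'evissage via Kedlaya--Mochizuki to normal forms $\E^{\varphi}\otimes\R$ along a normal crossing divisor, stability of $\RR$-constructibility under the enhanced operations to descend along ramification and blow-ups, full faithfulness via the reconstruction isomorphism, and commutativity of the square from the coincidence of tempered and ordinary solutions for regular holonomic modules. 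One caveat worth flagging: the Kedlaya--Mochizuki theorem gives the decomposition $\bigoplus_i(\E^{\varphi_i}\otimes\R_i)$ only after formal completion along the divisor, not literally in a neighborhood; passing from this formal normal form to the sectorial/asymptotic statements actually used in computing $\Sol^{\rmE}$ (and in checking that the exponential twists are ``absorbed'' in the reconstruction step) is a genuinely substantive part of \cite{DK16} that your sketch compresses into a single sentence. Within this paper, though, the theorem functions purely as a cited input, so your proposal is best read as a faithful summary of the external proof rather than an alternative to anything done here.
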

Here, $e_X\circ \iota_{X}\colon \BDC(\CC_X) \to \BEC(\I\CC_X)$ is the natural embedding functor
from the derived category $\BDC(\CC_X)$ of sheaves of $\CC$-vector spaces on $X$ to
the triangulated category $\BEC(\I\CC_{X})$ of enhanced ind-sheaves on $X$.
See \cite[Prop.\:4.7.15]{DK16} for the details.
See also \cite[Def.\:9.1.1]{DK16} for the definition of $\Sol_X^\rmE$.

Furthermore, T.\:Mochizuki proved that
the essential image of $\Sol_X^{\rmE}$ can be characterized  by the curve test \cite{Mochi22}.
Remark that T.\:Kuwagaki \cite[Thm.\:8.6]{Kuwa21} introduced another approach
to the irregular Riemann--Hilbert correspondence
via irregular constructible sheaves which are defined by $\CC$-constructible sheaves
with coefficients in a finite version of the Novikov ring and special gradings.

\subsection{$\CC$-Constructible Enhanced Ind-Sheaves}
In \cite{Ito20}, the author defined $\CC$-constructibility for enhanced ind-sheaves on $X$
and proved that they are nothing but objects of the essential image of $\Sol_X^{\rmE}$.
Namely, we obtain an equivalence of categories as below.
We denote by $\BEC_{\CC\mbox{\scriptsize -}c}(\I\CC_X)$
the triangulated category of $\CC$-constructible enhanced ind-sheaves on $X$.
See \cite[Def\:3.19]{Ito20} for the definition of  $\CC$-constructible enhanced ind-sheaves.

\begin{theorem}[{\cite[Thm.\:3.26]{Ito20}, \cite[Prop.\:3.1]{Ito23}}]\label{irregRH_ana}
The functor $\Sol_X^\rmE$ induces an equivalence of categories:
\[\Sol_X^{\rmE} \colon \BDChol(\D_X)^{\op}\simto \BEC_{\CC\mbox{\scriptsize -}c}(\I\CC_X)\]
and the following diagram is commutative:
\[\xymatrix@R=25pt@C=60pt@M=7pt{
\BDChol(\D_X)^{\op}\ar@{->}[r]^\sim\ar@<1.0ex>@{}[r]^-{\Sol_X^{\rmE}}
 & \BEC_{\CC\mbox{\scriptsize -}c}(\I\CC_X)\\
\BDCrh(\D_X)^{\op}\ar@{->}[r]_-{\Sol_X}^-{\sim}\ar@{}[u]|-{\bigcup}
&\BDC_{\CC\mbox{\scriptsize -}c}(\CC_X).
\ar@{^{(}->}[u]_-{e_X\circ \iota_X}
}\]
\end{theorem}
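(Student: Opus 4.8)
The plan is to build on Theorem~\ref{irregRH_DK}: since $\Sol_X^\rmE$ is already known to be fully faithful with values in $\BEC_{\RR\text{-}c}(\I\CC_X)$, establishing the equivalence amounts to identifying its essential image with the full subcategory $\BEC_{\CC\text{-}c}(\I\CC_X)$, after which the commutativity of the square costs essentially nothing new. I would therefore prove the two inclusions of essential images separately and then quote \cite[Prop.\:9.5.4]{DK16} for the diagram.

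For the inclusion $\Sol_X^\rmE\big(\BDChol(\D_X)^\op\big)\subseteq\BEC_{\CC\text{-}c}(\I\CC_X)$, fix $\M\in\BDChol(\D_X)$. Since $\Sol_X^\rmE$ is triangulated and the class of $\CC$-constructible enhanced ind-sheaves is stable under shifts, mapping cones and direct summands, I may reduce to the case of a single holonomic module $\M$. Choosing a complex analytic stratification $X=\bigsqcup_\alpha X_\alpha$ adapted to the characteristic variety of $\M$, the $\CC$-constructibility of $\Sol_X^\rmE(\M)$ in the sense of \cite[Def.\:3.19]{Ito20} reduces to showing that, locally along each stratum $X_\alpha$, the enhanced ind-sheaf $\Sol_X^\rmE(\M)$ has the prescribed normal form, that is, it is a finite direct sum of shifts of objects of the type $\CC_{X_\alpha}^\rmE\otimes\EE_{X_\alpha\vert X}^{\Re\varphi}$ for suitable meromorphic functions $\varphi$. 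This is exactly where the structure theory of holonomic $\D$-modules of Kedlaya \cite{Ked10,Ked11} and Mochizuki \cite{Mochi09,Mochi11} is needed: after finitely many complex blow-ups and ramified coverings, the formalization of $\M$ along the relevant divisor splits into a direct sum of modules, each an exponential module $\E^{\varphi}$ tensored with a regular holonomic one, and the compatibility of $\Sol_X^\rmE$ with proper direct images and finite inverse images proved in \cite{DK16} transports the resulting normal form back down to $X$; the regular summands contribute $\CC$-constructible pieces via Theorem~\ref{thm_regRH}. (Alternatively one could invoke Mochizuki's curve-test description \cite{Mochi22} of the essential image and compare it with \cite[Def.\:3.19]{Ito20}.)

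For the reverse inclusion I would proceed by d\'evissage. Using a complex analytic stratification and the recollement triangles attached to its open/closed decompositions, any $K\in\BEC_{\CC\text{-}c}(\I\CC_X)$ is built, up to shifts, mapping cones and direct summands, from finitely many elementary objects $E_Z^\varphi$, each supported on the closure of a single stratum $Z$ and locally of the form $\CC_Z^\rmE\otimes\EE_{\var Z\vert X}^{\Re\varphi}\otimes L$ with $L$ a local system on $Z$ and $\varphi$ meromorphic near $\var Z$. Each such $E_Z^\varphi$ is $\Sol_X^\rmE$ of an explicit holonomic $\D_X$-module, obtained as the exponential twist by $\varphi$ of the regular holonomic module attached to $(Z,L)$ through Theorem~\ref{thm_regRH} and extended from a neighborhood of $\var Z$; since $\Sol_X^\rmE$ is triangulated, its essential image hence contains all of $\BEC_{\CC\text{-}c}(\I\CC_X)$. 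Combined with the first inclusion this gives the equivalence, and the commutativity of the square is then \cite[Prop.\:9.5.4]{DK16}: a regular holonomic module has no exponential factors, so $\Sol_X^\rmE$ sends it into the image of $e_X\circ\iota_X$, where it coincides with $\Sol_X(\M)$.

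I expect the principal obstacle to be the first inclusion, and more precisely the bookkeeping required to descend the Kedlaya--Mochizuki normal form through the tower of blow-ups and ramified coverings while keeping track of $\CC$-constructibility stratum by stratum. By contrast the reverse inclusion is essentially formal once the elementary generators have been realized as enhanced solution complexes, and the commutativity of the square is a citation.
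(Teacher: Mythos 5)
This theorem is not proved in the paper you were given: it is quoted as background, with the proof deferred to \cite[Thm.\:3.26]{Ito20} and \cite[Prop.\:3.1]{Ito23}, so there is no in-paper argument to compare against line by line. Your plan nevertheless reconstructs essentially the strategy of the cited original: reduce to identifying the essential image of $\Sol_X^{\rmE}$ inside $\BEC_{\RR\mbox{\scriptsize -}c}(\I\CC_X)$ using the full faithfulness of Theorem \ref{irregRH_DK}, prove that enhanced solution complexes of holonomic modules acquire the normal form required by \cite[Def.\:3.19]{Ito20} via the Kedlaya--Mochizuki structure theory transported through the functoriality results of \cite{DK16}, obtain essential surjectivity by d\'evissage realizing the elementary generators as enhanced solutions of exponential twists of regular holonomic modules, and get the commutative square from \cite[Prop.\:9.5.4]{DK16}. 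The one point to make explicit is that the formal (Hukuhara--Turrittin type) splitting after blow-ups and ramified coverings is not by itself enough to give the normal form of $\Sol_X^{\rmE}$; you need the sectorial/asymptotic lifting encapsulated in the normal-form theorems of \cite{DK16}, which your appeal to that paper's results implicitly covers but should be cited as such rather than only the compatibility with proper push-forward and finite pull-back.
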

Moreover, the author showed that there exists a t-structure
on the triangulated category $\BEC_{\CC\mbox{\scriptsize -}c}(\I\CC_X)$
whose heart is equivalent to the abelian category $\Modhol(\D_X)$
of holonomic $\D_X$-modules as follows.
We set
\begin{align*}
{}^p\bfE^{\leq0}_{\CC\mbox{\scriptsize -}c}(\I\CC_X) &:=
\{K\in\BEC_{\CC\mbox{\scriptsize -}c}(\I\CC_X)\ |\ \sh_X(K)\in{}^p\bfD^{\leq0}_{\CC\mbox{\scriptsize -}c}(\CC_X)\},\\
{}^p\bfE^{\geq0}_{\CC\mbox{\scriptsize -}c}(\I\CC_X) &:=
\{K\in\BEC_{\CC\mbox{\scriptsize -}c}(\I\CC_X)\ |\
\sh_X(K)\in{}^p\bfD^{\geq0}_{\CC\mbox{\scriptsize -}c}(\CC_X)\}\\
&\:=
\{K\in\BEC_{\CC\mbox{\scriptsize -}c}(\I\CC_X)\ |\
\rmD_X^{\rmE}(K)\in{}^p\bfE^{\leq0}_{\CC\mbox{\scriptsize -}c}(\I\CC_X)\},
\end{align*}
where 
$\sh_X\colon \BEC(\I\CC_X) \to \BDC(\CC_X)$ is the sheafification functor and
$\rmD_X^{\rmE}\colon \BEC(\I\CC_X)^\op\to \BEC(\I\CC_X)$ is the duality functor for enhanced ind-sheaves.
See \cite[Def.\:3.4]{DK21} for the definition of the functor $\sh$,
\cite[Def.\:4.8.1]{DK16} for the definition of the functor $\rmD_X^\rmE$.
Then we have:
\begin{theorem}[{\cite[Thm.\:4.5]{Ito20}}]\label{irregRH_ito}
The pair $\big({}^p\bfE^{\leq0}_{\CC\mbox{\scriptsize -}c}(\I\CC_X),
{}^p\bfE^{\geq0}_{\CC\mbox{\scriptsize -}c}(\I\CC_X)\big)$
is a t-structure on $\BEC_{\CC\mbox{\scriptsize -}c}(\I\CC_X)$
and
its heart $$\Perv(\I\CC_X) :=
{}^p\bfE^{\leq0}_{\CC\mbox{\scriptsize -}c}(\I\CC_X)\cap
{}^p\bfE^{\geq0}_{\CC\mbox{\scriptsize -}c}(\I\CC_X)$$
 is equivalent to the abelian category $\Modhol(\D_X)$:
$$\Sol_X^\rmE(\cdot)[d_X]\colon \Modhol(\D_X)^\op\simto \Perv(\I\CC_X).$$
Moreover, the following diagram is commutative:
\[\xymatrix@R=25pt@C=60pt@M=7pt{
\Modhol(\D_X)^{\op}\ar@{->}[r]^\sim\ar@<1.0ex>@{}[r]^-{\Sol_X^{\rmE}(\cdot)[d_X]}
 & \Perv(\I\CC_X)\\
\Modrh(\D_X)^{\op}\ar@{->}[r]_-{\Sol_X(\cdot)[d_X]}^-{\sim}\ar@{}[u]|-{\bigcup}
&\Perv(\CC_X).
\ar@{^{(}->}[u]_-{e_X\circ \iota_X}
}\]
\end{theorem}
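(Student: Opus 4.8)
The plan is to obtain $\big({}^p\bfE^{\leq0}_{\CC\mbox{\scriptsize -}c}(\I\CC_X),{}^p\bfE^{\geq0}_{\CC\mbox{\scriptsize -}c}(\I\CC_X)\big)$ by transporting the standard t-structure of $\BDChol(\D_X)$ across the equivalence of Theorem~\ref{irregRH_ana}. Since $\Modhol(\D_X)$ is a thick abelian subcategory of $\Mod(\D_X)$, stable under subquotients and extensions, and $\BDChol(\D_X)$ consists precisely of the complexes all of whose cohomologies lie in $\Modhol(\D_X)$, the standard t-structure of $\BDC(\D_X)$ restricts to a t-structure on $\BDChol(\D_X)$ with heart $\Modhol(\D_X)$. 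Passing to the opposite category and shifting by $[d_X]$, the equivalence $\Sol^\rmE_X(\cdot)[d_X]\colon\BDChol(\D_X)^\op\simto\BEC_{\CC\mbox{\scriptsize -}c}(\I\CC_X)$ carries this to a t-structure on $\BEC_{\CC\mbox{\scriptsize -}c}(\I\CC_X)$ whose heart is automatically equivalent to $\Modhol(\D_X)^\op$ via $\Sol^\rmE_X(\cdot)[d_X]$. It remains only to check that the two subcategories produced this way agree with those in the statement; granting this, the heart equivalence $\Sol^\rmE_X(\cdot)[d_X]\colon\Modhol(\D_X)^\op\simto\Perv(\I\CC_X)$ is immediate.

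For the identification I would use the compatibility of sheafification with the solution functors, namely $\sh_X\circ\Sol^\rmE_X\cong\Sol_X$ on $\BDChol(\D_X)$ (which on regular holonomic modules specialises to $\sh_X\circ e_X\circ\iota_X\cong\id$, in accordance with Theorem~\ref{irregRH_DK}); I regard this as the technical heart of the argument and return to it below. Granting it, membership of $\Sol^\rmE_X(\M)$ in ${}^p\bfE^{\leq0}_{\CC\mbox{\scriptsize -}c}(\I\CC_X)$ becomes membership of $\Sol_X(\M)$ in ${}^p\bfD^{\leq0}_{\CC\mbox{\scriptsize -}c}(\CC_X)$. Now, by the perversity theorem for solution complexes of holonomic $\D$-modules, $\Sol_X(\cdot)[d_X]$ sends a holonomic module placed in degree $0$ to a perverse sheaf; since $\Sol_X$ is (contravariantly) triangulated and annihilates no nonzero holonomic module, a routine induction on cohomological amplitude shows that $\Sol_X(\cdot)[d_X]\colon\BDChol(\D_X)^\op\to\BDC_{\CC\mbox{\scriptsize -}c}(\CC_X)$ is t-exact for the shifted opposite standard t-structure and the perverse t-structure, and that the preimage of ${}^p\bfD^{\leq0}_{\CC\mbox{\scriptsize -}c}(\CC_X)$ (resp.\ ${}^p\bfD^{\geq0}_{\CC\mbox{\scriptsize -}c}(\CC_X)$) is exactly the corresponding truncation class. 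Hence the t-structure of the statement coincides with the transported one. The alternative description of ${}^p\bfE^{\geq0}_{\CC\mbox{\scriptsize -}c}(\I\CC_X)$ via $\rmD^\rmE_X$ then follows from the compatibility $\sh_X\circ\rmD^\rmE_X\cong\rmD_X\circ\sh_X$ (valid on $\RR$-constructible, hence on $\CC$-constructible, objects) together with the exchange property $\rmD_X\big({}^p\bfD^{\geq0}_{\CC\mbox{\scriptsize -}c}(\CC_X)\big)={}^p\bfD^{\leq0}_{\CC\mbox{\scriptsize -}c}(\CC_X)$ of the perverse t-structure.

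For the commutative diagram, restrict $\Sol^\rmE_X(\cdot)[d_X]$ to $\Modrh(\D_X)^\op\subset\Modhol(\D_X)^\op$. By the commutative diagram of Theorem~\ref{irregRH_ana} one has $\Sol^\rmE_X(\M)[d_X]\cong e_X\iota_X\big(\Sol_X(\M)[d_X]\big)$, and $\Sol_X(\M)[d_X]\in\Perv(\CC_X)$ by the classical regular Riemann--Hilbert correspondence (Theorem~\ref{thm_regRH} together with the perversity statement recalled in the introduction); moreover $e_X\iota_X\big(\Perv(\CC_X)\big)\subset\Perv(\I\CC_X)$, since $\sh_X\circ e_X\circ\iota_X\cong\id$ gives $\sh_X\big(e_X\iota_X F\big)\cong F\in\Perv(\CC_X)$ for $F\in\Perv(\CC_X)$. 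This yields the asserted compatibility with $e_X\circ\iota_X$ and $\Sol_X(\cdot)[d_X]$.

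The main obstacle is the compatibility $\sh_X\circ\Sol^\rmE_X\cong\Sol_X$ in full holonomic generality (for regular holonomic modules it is essentially Theorem~\ref{irregRH_DK}). I would establish it by d\'evissage, reducing --- via the curve test and the resolution of turning points of Kedlaya and Mochizuki --- to the case of a meromorphic connection admitting a good formal decomposition, and then further to rank-one exponential modules $\E^{\varphi}_{X|D}$, where $\Sol^\rmE_X$ is given explicitly as an exponentially twisted constant enhanced ind-sheaf supported, in the extra variable, on the region $\{t+\Re\varphi\geq0\}$; on such objects $\sh_X$ can be computed directly by letting $t\to+\infty$ and compared with the elementary computation of the ordinary solution complex $\Sol_X(\E^{\varphi}_{X|D})$. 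If instead this compatibility is quoted from the literature in the needed generality, the present theorem follows formally from it together with Theorem~\ref{irregRH_ana} and the perversity theorem for solution complexes.
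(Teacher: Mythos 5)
Note first that the paper does not prove Theorem \ref{irregRH_ito} here: it is recalled from \cite{Ito20}, and the present paper only uses it (Section 2 transports it to the subanalytic setting via Proposition \ref{prop2.2}). Your overall architecture --- transport the standard t-structure of $\BDChol(\D_X)$ through the equivalence $\Sol_X^{\rmE}(\cdot)[d_X]$ and then identify the transported truncation classes with the ones defined through $\sh_X$ --- is the right shape. But the step you yourself call the technical heart is false: there is no isomorphism $\sh_X\circ\Sol_X^{\rmE}\cong\Sol_X$ on $\BDChol(\D_X)$. The correct statement, which is the main theorem of \cite{DK21} (the very reference the paper cites for the functor $\sh_X$), is $\sh_X\big(\Sol_X^{\rmE}(\M)\big)\simeq\Sol_X(\M_{\reg})$, the solution complex of the \emph{regularization} of $\M$. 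Your proposed d\'evissage to rank-one exponentials would in fact expose the failure rather than prove the claim: for $\M=\E^{1/x}$ on $X=\CC$, letting $t\to+\infty$ in the exponential enhanced ind-sheaf kills the irregular contribution and gives $\sh_X\big(\Sol_X^{\rmE}(\E^{1/x})\big)\simeq\Sol_X\big(\SO_X(*0)\big)\simeq j_{!}\CC_{X\setminus\{0\}}$, whereas $\Sol_X(\E^{1/x})$ has an extra one-dimensional cohomology class along the divisor accounted for by the irregularity (local index formula). For the same reason your auxiliary claim $\sh_X\circ\rmD_X^{\rmE}\cong\rmD_X\circ\sh_X$ on constructible objects is also false (on this example the two sides differ by $j_{!}$ versus $\bfR j_{*}$); the equality of the two descriptions of ${}^p\bfE^{\geq0}_{\CC\mbox{\scriptsize -}c}(\I\CC_X)$ holds for a different reason, namely the compatibility of $\rmD_X^{\rmE}$ with $\Sol_X^{\rmE}$ and of D-module duality with regularization.

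The gap is repairable without changing your skeleton: replace the false isomorphism by the regularization theorem of \cite{DK21} and use that $\M\mapsto\M_{\reg}$ is exact and conservative on holonomic modules. Then $\M\in\bfD^{\geq0}_{\rm hol}(\D_X)$ iff $\M_{\reg}$ is concentrated in nonnegative degrees, iff $\Sol_X(\M_{\reg})[d_X]\in{}^p\bfD^{\leq0}_{\CC\mbox{\scriptsize -}c}(\CC_X)$ by Kashiwara's perversity theorem applied to the regular holonomic complex $\M_{\reg}$ (together with the standard d\'evissage and conservativity argument you sketch), i.e.\ iff $\Sol_X^{\rmE}(\M)[d_X]\in{}^p\bfE^{\leq0}_{\CC\mbox{\scriptsize -}c}(\I\CC_X)$ as defined via $\sh_X$. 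With this substitution your transport argument yields the t-structure and the heart equivalence, and your treatment of the commutative diagram is fine as written, since for regular holonomic $\M$ one does have $\Sol_X^{\rmE}(\M)\simeq e_X\iota_X\Sol_X(\M)$ and $\sh_X\circ e_X\circ\iota_X\cong\id$.
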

Remark that the author reproved the regular Riemann--Hilbert correspondence (Theorem \ref{thm_regRH})
by using the irregular Riemann--Hilbert correspondence (Theorem \ref{irregRH_ana})
in \cite{Ito23}.

\subsection{Algebraic Irregular Riemann--Hilbert Correspondence and Enhanced Ind-Sheaves}
Moreover, the author proved an algebraic version of Theorem \ref{irregRH_ana} in \cite{Ito21} as below.
Let $X$ be a smooth algebraic variety over $\CC$
and denote by $\tl{X}$ a smooth completion of $X$
(i.e., $\tl{X}$ is a smooth complete algebraic variety over $\CC$
which contains $X$ as an open subvariety and $\tl{X}\setminus X$ is a normal crossing divisor of $\tl{X}$).
Remark that thanks to Nagata's compactification theorem \cite[Thm.\:4.3]{Naga}
and Hironaka's desingularization theorem \cite{Hiro},
such an $\widetilde{X}$ exists
for any smooth algebraic variety $X$ over $\CC$.

We denote by $\BEC_{\CC\mbox{\scriptsize -}c}(\I\CC_{X_\infty})$
the triangulated category of algebraic $\CC$-constructible enhanced ind-sheaves
on a bordered space $X_\infty^\an = (X^\an, \tl{X}^\an)$.
Here, $X^\an$ (resp.\:$\tl{X}^\an$) is the underlying complex manifold of $X$ (resp.\:$\tl{X}$).
See \cite[Def.\:3.2.1]{DK16} for the definition of bordered spaces,
\cite[Defs.\:3.1, 3.10]{Ito21} for the definition of algebraic $\CC$-constructible enhanced ind-sheaves.

\begin{theorem}[{\cite[Thm.\:3.11, Prop.\:3.14]{Ito21}}]\label{irregRH_alg}
There exists an equivalence of categories
\[\Sol_{X_\infty}^{\rmE} \colon \BDChol(\D_X)^{\op}\simto \BEC_{\CC\mbox{\scriptsize -}c}(\I\CC_{X_\infty})\]
and the following diagram is commutative:
\[\xymatrix@R=25pt@C=60pt@M=7pt{
\BDChol(\D_X)^{\op}\ar@{->}[r]^\sim\ar@<1.0ex>@{}[r]^-{\Sol_{X_\infty}^{\rmE}}
 & \BEC_{\CC\mbox{\scriptsize -}c}(\I\CC_{X_\infty})\\
\BDCrh(\D_X)^{\op}\ar@{->}[r]_-{\Sol_X}^-{\sim}\ar@{}[u]|-{\bigcup}
&\BDC_{\CC\mbox{\scriptsize -}c}(\CC_X),
\ar@{^{(}->}[u]_-{e_{X_\infty^\an}\circ \iota_{X_\infty^\an}}
}\]
where the second horizontal arrow is the algebraic regular Riemann--Hilbert correspondence.
\end{theorem}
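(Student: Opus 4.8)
The plan is to deduce the statement from its analytic counterpart, Theorem~\ref{irregRH_ana}, applied on the compact complex manifold $\tl{X}^\an$, by matching both sides of the desired equivalence with the subcategories of ``objects localized along the boundary divisor'' $D:=\tl{X}\bs X$ on $\tl{X}^\an$.

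On the $\D$-module side, let $j\colon X\hookrightarrow\tl{X}$ be the open immersion and write $\BDChol(\D_{\tl{X}}(*D))$ for the full subcategory of $\BDChol(\D_{\tl{X}})$ of objects $\N$ with $\N\simto\N(*D)$. Since the algebraic direct image $\int_j$ preserves holonomicity and $\int_j\M\simto(\int_j\M)(*D)$ for every holonomic $\D_X$-module $\M$ (because $\mathrm{R}\Gamma_{[D]}\circ\int_j=0$), the restriction of $j^{*}$ is an equivalence $\BDChol(\D_{\tl{X}}(*D))\simto\BDChol(\D_X)$ with quasi-inverse $\int_j$. Next, as $\tl{X}$ is complete, analytification induces an equivalence
\[
\BDChol(\D_{\tl{X}}(*D))\simto\BDChol(\D_{\tl{X}^\an}(*D^\an)),
\]
a GAGA-type comparison for (possibly irregular) holonomic $\D$-modules with poles along the fixed divisor $D$; I would establish it by reducing, via the Kedlaya--Mochizuki structure theorem, to the classical comparison for regular holonomic modules together with the fact that over the complete variety $\tl{X}$ the exponential factors occurring in the formal decomposition are automatically algebraic. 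Composing, I obtain an equivalence $\BDChol(\D_X)\simto\BDChol(\D_{\tl{X}^\an}(*D^\an))$.

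On the topological side, the formalism of enhanced ind-sheaves on bordered spaces identifies $\BEC_{\CC\mbox{\scriptsize -}c}(\I\CC_{X_\infty})$ with the full subcategory of $\BEC_{\CC\mbox{\scriptsize -}c}(\I\CC_{\tl{X}^\an})$ of objects $K$ localized along $D^\an$ (i.e.\ with $\pi^{-1}\CC_{\tl{X}^\an\bs D^\an}\otimes K\simto K$), the equivalence being given by $\mathbf{E} j_{\infty !!}$, with quasi-inverse $\mathbf{E} j_\infty^{-1}$, for the semiproper morphism $X_\infty\to\tl{X}^\an$ of bordered spaces. Since $\Sol_{\tl{X}^\an}^{\rmE}$ interchanges the localization $(*D^\an)$ of holonomic $\D_{\tl{X}^\an}$-modules with the functor $\pi^{-1}\CC_{\tl{X}^\an\bs D^\an}\otimes(-)$, the equivalence of Theorem~\ref{irregRH_ana} restricts to an equivalence $\Sol_{\tl{X}^\an}^{\rmE}\colon\BDChol(\D_{\tl{X}^\an}(*D^\an))^\op\simto\{K\in\BEC_{\CC\mbox{\scriptsize -}c}(\I\CC_{\tl{X}^\an})\mid\pi^{-1}\CC_{\tl{X}^\an\bs D^\an}\otimes K\simto K\}$. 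Combining this with the $\D$-module equivalence above yields an equivalence $\BDChol(\D_X)^\op\simto\BEC_{\CC\mbox{\scriptsize -}c}(\I\CC_{X_\infty})$, and unwinding the definition of $\Sol_{X_\infty}^{\rmE}$ (the restriction to the bordered space of the enhanced solution complex of the analytified localized direct image) shows that this composite is $\Sol_{X_\infty}^{\rmE}$.

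Finally, for the commutativity of the square with the algebraic regular Riemann--Hilbert correspondence, I would transport all four corners to $\tl{X}^\an$ through the identifications above; the square then reduces to the commutative square of Theorem~\ref{irregRH_ana} restricted to regular objects, once one checks that $e_{X_\infty}\circ\iota_{X_\infty}$ is compatible with $\mathbf{E} j_{\infty !!}$ and the corresponding natural embedding on $\tl{X}^\an$, and that $\Sol_X$ agrees, after the localized analytification, with the analytic solution functor on $X^\an$ — which is precisely the classical algebraic regular Riemann--Hilbert correspondence combined with GAGA for regular holonomic modules. The main obstacle is the $\D$-module GAGA invoked above: establishing full faithfulness and essential surjectivity of analytification for irregular holonomic modules localized along $D$ on the complete variety $\tl{X}$; the remaining steps are a manipulation of adjunctions and of restriction functors for bordered spaces.
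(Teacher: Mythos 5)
You should first note that this paper does not actually prove Theorem \ref{irregRH_alg}: it is recalled from \cite[Thm.\:3.11, Prop.\:3.14]{Ito21}, so your argument has to be measured against that cited proof. Your starting point --- pass to a smooth completion $\tl{X}$, replace $\M$ by the localized direct image $\bfD j_{\ast}\M$ supported in $\BDChol(\D_{\tl{X}}(*D))$ with $D=\tl{X}\setminus X$, analytify, and invoke the analytic Theorem \ref{irregRH_ana} on $\tl{X}^\an$ --- is indeed the way $\Sol_{X_\infty}^{\rmE}$ is constructed in \cite{Ito21}, and the equivalence $\BDChol(\D_{\tl{X}}(*D))\simeq\BDChol(\D_X)$ via $j^{*}$ and $\bfD j_{\ast}$ is correct.

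The genuine gap lies in the two identifications that carry all the weight; both are asserted rather than proved, and together they are essentially equivalent to the theorem itself. First, the GAGA-type equivalence $\BDChol(\D_{\tl{X}}(*D))\simeq\BDChol(\D_{\tl{X}^\an}(*D^\an))$ for possibly irregular holonomic modules is a substantial statement not available in the literature cited here, and your one-line reduction does not work as stated: good formal structures in the sense of Kedlaya--Mochizuki exist only after blow-ups and ramified coverings, the exponential factors are a priori only multivalued analytic--meromorphic germs along $D^\an$ (so ``automatically algebraic over the complete variety'' needs a real argument), one must algebraize and descend along those modifications, one must treat holonomic objects that are not meromorphic connections (localization along $D$ constrains nothing in the interior of $X$, where irregular singularities may also occur), and even the full faithfulness of analytification requires a properness comparison of the relevant solution/Hom complexes. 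Second, the claim that $\BEC_{\CC\mbox{\scriptsize -}c}(\I\CC_{X_\infty})$ coincides, via $\bfE j_{!!}$, with the objects of $\BEC_{\CC\mbox{\scriptsize -}c}(\I\CC_{\tl{X}^\an})$ localized along $D^\an$ is not ``the formalism of bordered spaces'': $\bfE j_{!!}$ does identify $\BEC(\I\CC_{X_\infty})$ with the localized objects, but algebraic $\CC$-constructibility in \cite[Defs.\:3.1, 3.10]{Ito21} is defined through algebraic stratifications and algebraic normal-form data, so matching it with analytic $\CC$-constructibility of the extension is precisely the algebraic-versus-analytic comparison in question, not an input. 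The cited proof avoids any essential-surjectivity GAGA for $\D$-modules: it defines $\Sol_{X_\infty}^{\rmE}(\M)$ as the restriction to $X_\infty^\an$ of $\Sol_{\tl{X}^\an}^{\rmE}\bigl((\bfD j_{\ast}\M)^\an\bigr)$, gets full faithfulness from the analytic theorem on the complete $\tl{X}^\an$, and gets essential surjectivity from the algebraicity built into the definition of the target category, arguing along an algebraic stratification. Unless you supply proofs of the two identifications above, the proposal does not establish the theorem; the commutativity of the square is then a minor verification, as you say.
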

Here, $e_{X_\infty^\an}\circ \iota_{X_\infty^\an}\colon \BDC(\CC_X) \to \BEC(\I\CC_{X_\infty^\an})$
is the natural embedding functor
from the derived category $\BDC(\CC_X)$ of sheaves of $\CC$-vector spaces on $X$ to
the triangulated category $\BEC(\I\CC_{X_\infty^\an})$ of enhanced ind-sheaves on $X_\infty^\an$.
See \cite[Def.\:2.19]{KS16} and also \cite[The paragraph before Notation 2.3]{DK21} for the details.
See  \cite[\S 3.2]{Ito21} for the definition of $\Sol_{X_\infty}^\rmE$.

Moreover, the author showed that there exists a t-structure
on the triangulated category $\BEC_{\CC\mbox{\scriptsize -}c}(\I\CC_{X_\infty})$
whose heart is equivalent to the abelian category $\Modhol(\D_X)$
of algebraic holonomic $\D_X$-modules as follows.
Let us recall that the triangulated category $\BDC_{\CC\mbox{\scriptsize -}c}(\CC_X)$
of algebraic $\CC$-constructible sheaves on $X^\an$ has the perverse t-structure 
$\big({}^p\bfD^{\leq0}_{\CC\mbox{\scriptsize -}c}(\CC_X),
{}^p\bfD^{\geq0}_{\CC\mbox{\scriptsize -}c}(\CC_X)\big)$
and denote by $\Perv(\CC_X)$ the heart of its t-structure,
see \cite{BBD} (also \cite[Thm.\:8.1.27]{HTT}) for the details.
We set
\begin{align*}
{}^p\bfE^{\leq0}_{\CC\mbox{\scriptsize -}c}(\I\CC_{X_\infty}) &:=
\{K\in\BEC_{\CC\mbox{\scriptsize -}c}(\I\CC_{X_\infty})\ |\
\sh_{X^\an_\infty}(K)\in{}^p\bfD^{\leq0}_{\CC\mbox{\scriptsize -}c}(\CC_X)\},\\
{}^p\bfE^{\geq0}_{\CC\mbox{\scriptsize -}c}(\I\CC_{X_\infty}) &:=
\{K\in\BEC_{\CC\mbox{\scriptsize -}c}(\I\CC_{X_\infty})\ |\ 
\sh_{X^\an_\infty}(K)\in{}^p\bfD^{\geq0}_{\CC\mbox{\scriptsize -}c}(\CC_X)\}\\
&\ =
\{K\in\BEC_{\CC\mbox{\scriptsize -}c}(\I\CC_{X_\infty})\ |\
\rmD_{X^\an_\infty}^{\rmE}(K)\in{}^p\bfE^{\leq0}_{\CC\mbox{\scriptsize -}c}(\I\CC_{X_\infty})\}
\end{align*}
and we have:

\begin{theorem}[{\cite[Thm.\:3.20, Prop.\:4.8]{Ito21}}]
The pair $\big({}^p\bfE^{\leq0}_{\CC\mbox{\scriptsize -}c}(\I\CC_{X_\infty}),
{}^p\bfE^{\geq0}_{\CC\mbox{\scriptsize -}c}(\I\CC_{X_\infty})\big)$
is a t-structure on $\BEC_{\CC\mbox{\scriptsize -}c}(\I\CC_{X_\infty})$
and
its heart $$\Perv(\I\CC_{X_\infty}) :=
{}^p\bfE^{\leq0}_{\CC\mbox{\scriptsize -}c}(\I\CC_{X_\infty})\cap{}^p\bfE^{\geq0}_{\CC\mbox{\scriptsize -}c}(\I\CC_{X_\infty})$$
is equivalent to the abelian category $\Modhol(\D_X)$ of holonomic $\D_X$-modules:
\[\Sol_{X_\infty}^\rmE(\cdot)[d_X] \colon \Modhol(\D_X)^{\op}\simto\Perv(\I\CC_{X_\infty}).\]
Moreover, the following diagram is commutative:
\[\xymatrix@R=25pt@C=60pt@M=7pt{
\Modhol(\D_X)^{\op}\ar@{->}[r]^\sim\ar@<1.0ex>@{}[r]^-{\Sol_{X_\infty}^{\rmE}(\cdot)[d_X]}
 & \Perv(\I\CC_{X_\infty})\\
\Modrh(\D_X)^{\op}\ar@{->}[r]_-{\Sol_X(\cdot)[d_X]}^-{\sim}\ar@{}[u]|-{\bigcup}
&\Perv(\CC_X).
\ar@{^{(}->}[u]_-{e_{X_\infty^\an}\circ \iota_{X_\infty^\an}}
}\]
\end{theorem}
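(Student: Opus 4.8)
\medskip
\noindent\emph{Proof sketch.}
The plan is to pull back, along the equivalence $\Sol_{X_\infty}^\rmE$ of Theorem \ref{irregRH_alg}, the standard t-structure on $\BDChol(\D_X)$ (the one with heart $\Modhol(\D_X)$), suitably opposed and shifted by $d_X$, and then to recognise the resulting pair of subcategories as the one defined here through $\sh_{X^\an_\infty}$; granting this, the t-structure property and the description of the heart are immediate and the commutative diagram follows by passing to hearts. The first key point is a natural isomorphism $\sh_{X^\an_\infty}\circ\Sol_{X_\infty}^\rmE\cong\Sol_X$ of functors $\BDChol(\D_X)^\op\to\BDC_{\CC\mbox{\scriptsize -}c}(\CC_X)$, where $\Sol_X(\M)$ on the right is the solution complex of (the analytification of) $\M$ regarded as an algebraic $\CC$-constructible object on $X^\an$. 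For regular holonomic $\M$ this is contained in the commutative square of Theorem \ref{irregRH_alg} together with $\sh_{X^\an_\infty}\circ e_{X_\infty^\an}\circ\iota_{X_\infty^\an}\cong\id$; the general case reduces to it, as the exponential/Stokes growth data that distinguishes $\Sol^\rmE$ from $\Sol$ is killed by sheafification. One must also check that $\sh_{X^\an_\infty}(K)$ is algebraic $\CC$-constructible for $K\in\BEC_{\CC\mbox{\scriptsize -}c}(\I\CC_{X_\infty})$ and that its perversity can be tested over $X^\an$, so that the bordered-space definition matches the one on the complex manifold $X^\an$.

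The second ingredient is the classical fact that the ordinary solution functor is t-exact on holonomic modules: $\Sol_X(\cdot)[d_X]$ carries $\Modhol(\D_X)^\op$ into $\Perv(\CC_X)$ and, as a contravariant triangulated functor, it interchanges the standard t-structure on $\BDChol(\D_X)^\op$ with the perverse t-structure on $\BDC_{\CC\mbox{\scriptsize -}c}(\CC_X)$. This follows from Kashiwara's codimension estimate for the solution complex of a holonomic $\D$-module together with the compatibility of $\Sol_X$ with $\D$-module/Verdier duality, neither of which requires regularity. Since $\Sol_X$ is in addition conservative on $\BDChol(\D_X)$, pulling the perverse t-structure back along $\Sol_X(\cdot)[d_X]$ returns precisely the opposed, $d_X$-shifted standard t-structure. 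Combining this with the isomorphism $\sh_{X^\an_\infty}\circ\Sol_{X_\infty}^\rmE\cong\Sol_X$ and the equivalence $\Sol_{X_\infty}^\rmE$, the pair $\big({}^p\bfE^{\leq0}_{\CC\mbox{\scriptsize -}c}(\I\CC_{X_\infty}),{}^p\bfE^{\geq0}_{\CC\mbox{\scriptsize -}c}(\I\CC_{X_\infty})\big)$ is carried by $\Sol_{X_\infty}^\rmE(\cdot)[d_X]$ onto that t-structure; hence it is a t-structure, and its heart is the essential image of $\Modhol(\D_X)^\op$, giving $\Sol_{X_\infty}^\rmE(\cdot)[d_X]\colon\Modhol(\D_X)^\op\simto\Perv(\I\CC_{X_\infty})$.

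It remains to check the alternative description of ${}^p\bfE^{\geq0}_{\CC\mbox{\scriptsize -}c}(\I\CC_{X_\infty})$ and the commutative diagram. The former follows from the compatibility of the enhanced duality $\rmD_{X^\an_\infty}^\rmE$ with Verdier duality under $\sh_{X^\an_\infty}$, combined with the fact that Verdier duality exchanges ${}^p\bfD^{\leq0}_{\CC\mbox{\scriptsize -}c}(\CC_X)$ and ${}^p\bfD^{\geq0}_{\CC\mbox{\scriptsize -}c}(\CC_X)$. For the diagram, restrict $\Sol_{X_\infty}^\rmE(\cdot)[d_X]$ to $\Modrh(\D_X)^\op$: by Theorem \ref{irregRH_alg} it agrees there with $(e_{X_\infty^\an}\circ\iota_{X_\infty^\an})\circ\Sol_X(\cdot)[d_X]$, where $\Sol_X(\cdot)[d_X]\colon\Modrh(\D_X)^\op\simto\Perv(\CC_X)$ is the algebraic regular Riemann--Hilbert correspondence and $e_{X_\infty^\an}\circ\iota_{X_\infty^\an}$ sends $\Perv(\CC_X)$ into $\Perv(\I\CC_{X_\infty})$, again by $\sh_{X^\an_\infty}\circ e_{X_\infty^\an}\circ\iota_{X_\infty^\an}\cong\id$.

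I expect the main obstacle to be the comparison $\sh_{X^\an_\infty}\circ\Sol_{X_\infty}^\rmE\cong\Sol_X$ together with the bordered-space bookkeeping around the normal crossing boundary $\tl{X}\setminus X$, since this is exactly where the argument genuinely differs from the analytic Theorem \ref{irregRH_ito}; by contrast, the t-exactness and conservativity of $\Sol_X$ on $\BDChol(\D_X)$ are classical, and the remaining manipulations with t-structures are formal. (Alternatively, one could try to deduce the statement from its analytic counterpart Theorem \ref{irregRH_ito} applied to $X^\an$, but this still requires the same analysis of the boundary.)
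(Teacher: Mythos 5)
This statement is recalled from \cite{Ito21} (Thm.\ 3.20, Prop.\ 4.8) and the present paper contains no proof of it, so I am measuring your argument against the proof in the cited source. Your overall architecture --- transport the standard t-structure through the equivalence $\Sol^{\rmE}_{X_\infty}$ of Theorem \ref{irregRH_alg} and identify the transported pair with the one defined through $\sh_{X^\an_\infty}$ --- is the right shape, and the classical inputs you invoke (perversity of $\Sol_X(\M)[d_X]$ and duality compatibility for \emph{all} holonomic $\M$, conservativity, the formal t-structure manipulations, restriction to $\Modrh(\D_X)$ for the diagram) are fine. The genuine gap is exactly the step you flag as the ``first key point'': the asserted natural isomorphism $\sh_{X^\an_\infty}\circ\Sol^{\rmE}_{X_\infty}\cong\Sol_X$ on all of $\BDChol(\D_X)$ is false for irregular modules, and the heuristic that ``the exponential/Stokes growth data is killed by sheafification, so the general case reduces to the regular one'' points the wrong way. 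What sheafification actually produces is the solution complex of the \emph{regularization}: by D'Agnolo--Kashiwara \cite{DK21} one has $\sh\big(\Sol^{\rmE}(\M)\big)\simeq\Sol(\M_{\reg})$, and $\Sol(\M_{\reg})\not\simeq\Sol(\M)$ in general. Concretely, for $X=\CC$ and $\M=\D_X\,e^{1/x}\simeq\D_X/\D_X(x^2\partial+1)$ one has $\Sol_X(\M)\simeq\CC_{X\setminus\{0\}}\oplus\CC_{\{0\}}[-1]$ (the degree-one skyscraper has rank equal to the irregularity), whereas $\sh\big(\Sol^{\rmE}_X(\M)\big)\simeq\CC_{X\setminus\{0\}}$, which is $\Sol_X$ of the regular meromorphic connection $(\SO_X(*0),d)=\M_{\reg}$. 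So the comparison lemma on which your whole identification rests does not hold, and with it the proof collapses as written.

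The repair is essentially what \cite{Ito20} and \cite{Ito21} do: replace your lemma by the regularization theorem $\sh\circ\Sol^{\rmE}\simeq\Sol\circ(\cdot)_{\reg}$ of \cite{DK21}, and use that the regularization functor is exact and conservative on holonomic modules and commutes with duality; then $\M\in\Modhol(\D_X)\Longleftrightarrow\M_{\reg}\in\Modrh(\D_X)\Longleftrightarrow\Sol_X(\M_{\reg})[d_X]\in\Perv(\CC_X)$, and transporting through $\Sol^{\rmE}_{X_\infty}$ yields both the t-structure and the heart, after which the commutative diagram follows on hearts as you describe. Two further cautions: your derivation of the dual description of ${}^p\bfE^{\geq0}_{\CC\mbox{\scriptsize -}c}(\I\CC_{X_\infty})$ via ``compatibility of $\rmD^{\rmE}_{X^\an_\infty}$ with Verdier duality under $\sh$'' runs into the same obstruction, since $\sh$ does not commute with duality (that failure is again the regularization phenomenon); the correct route is through $\rmD^{\rmE}_{X^\an_\infty}\Sol^{\rmE}_{X_\infty}(\M)\simeq\Sol^{\rmE}_{X_\infty}(\DD_X\M)$ on the $\D$-module side. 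And the bordered/compactification bookkeeping you mention (the boundary $\tl{X}\setminus X$, extension by $\bfE j_{X^\an_\infty!!}$, algebraic versus analytic $\CC$-constructibility) does require an argument --- in \cite{Ito21} it is handled by passing to the smooth completion --- but it is secondary to the false comparison isomorphism.
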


\subsection{Irregular Riemann--Hilbert Correspondence and Enhanced Subanalytic Sheaves}
Let $X$ be a complex manifold again.
At the 16th Takagi lecture,
M.\:Kashiwara explained a similar result of Theorem\:\ref{irregRH_DK}
by using subanalytic sheaves instead of enhanced ind-sheaves as below.
We denote by $\BDC(\CC_{X\times\RR_\infty}^{\sub})$ the derived category of subanalytic sheaves
on an analytic bordered space $X\times\RR_\infty$. 
See \cite[\S\:3.4]{Kas16} and also \cite[\S\:3.1]{Ito24a}
for the definition of subanalytic sheaves on real analytic bordered spaces,
\cite[Def.\:5.4.1]{DK16} for the definition of real analytic bordered spaces.
See also \cite[\S\:6]{KS01} and \cite{Pre08}.
\begin{theorem}[{\cite[\S 5.4]{Kas16}}]
Then there exists a fully faithful functor: 
$$\Sol_X^{\T}\colon \BDChol(\D_X)^{\op}\hookrightarrow\BDC(\CC^\sub_{X\times \RR_\infty}).$$
\end{theorem}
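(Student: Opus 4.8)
The plan is to realise $\Sol_X^{\T}$ through a complex of enhanced tempered holomorphic functions on the bordered space $X\times\RR_\infty$ and then to transport the full faithfulness of D'Agnolo--Kashiwara's enhanced solution functor $\Sol_X^{\rmE}$ (Theorem \ref{irregRH_DK}) along the comparison between enhanced ind-sheaves and enhanced subanalytic sheaves established in \cite{Ito24a}, rather than redeveloping the proof of \cite{DK16} in the subanalytic setting from scratch.

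First I would set up the coefficient object: the \emph{enhanced tempered holomorphic complex} $\sho^{\T}_{X}\in\BDC(\CC^{\sub}_{X\times\RR_\infty})$ --- the subanalytic-sheaf counterpart of the enhanced ind-sheaf $\sho^{\rmE}_X$ of \cite[\S 9]{DK16}, built from the tempered holomorphic functions of \cite{KS01}, \cite{Pre08} together with the exponential twist by the coordinate $\tau$ of $\RR_\infty$ --- equipped with its natural left $\D_X$-action. One then sets
\[
\Sol_X^{\T}(\M) \;:=\; \rhom_{\D_X}\!\big(\M,\,\sho^{\T}_{X}\big),
\]
interpreted in the subanalytic-sheaf formalism (pull $\M$ back along $X\times\RR_\infty\to X$ and take the tempered hom-complex). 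This is visibly a cohomological, contravariant functor $\BDChol(\D_X)\to\BDC(\CC^{\sub}_{X\times\RR_\infty})$ exchanging distinguished triangles, so it takes values in the stated target and only the hom-bijections remain to be checked. Moreover $\Sol_X^{\T}(\M)$ is, up to isomorphism, \emph{stable}, i.e.\ invariant under convolution along $\RR_\infty$ with the constant sheaf on $\{\tau\geq0\}$; hence $\Sol_X^{\T}$ factors through the full subcategory $\SS\subset\BDC(\CC^{\sub}_{X\times\RR_\infty})$ of stable objects, on which the quotient functor $Q$ onto the category $\BEC(\CC^{\sub}_X)$ of enhanced subanalytic sheaves is fully faithful --- the subanalytic incarnation of the discussion of stable enhanced ind-sheaves in \cite[\S 4.7]{DK16}.

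Next I would invoke \cite{Ito24a}: there is a natural functor from $\BEC(\CC^{\sub}_X)$ to the triangulated category $\BEC(\I\CC_X)$ of enhanced ind-sheaves which carries $Q\,\sho^{\T}_{X}$ to D'Agnolo--Kashiwara's $\sho^{\rmE}_X$ and is fully faithful on the essential image of $Q\circ\Sol_X^{\T}$; it therefore intertwines $Q\circ\Sol_X^{\T}$ with $\Sol_X^{\rmE}$. Since $\Sol_X^{\rmE}$ is fully faithful by Theorem \ref{irregRH_DK}, so is $Q\circ\Sol_X^{\T}$; combined with the full faithfulness of $Q|_{\SS}$ and of the inclusion $\SS\hookrightarrow\BDC(\CC^{\sub}_{X\times\RR_\infty})$, this yields, for all $\M,\N\in\BDChol(\D_X)$,
\[
\Hom_{\BDChol(\D_X)}(\N,\M) \;\simto\; \Hom_{\BDC(\CC^{\sub}_{X\times\RR_\infty})}\!\big(\Sol_X^{\T}(\M),\,\Sol_X^{\T}(\N)\big),
\]
so $\Sol_X^{\T}$ is fully faithful. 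I expect the main obstacle to lie in this middle step: pinning down the stable subcategory $\SS$, proving that $Q|_{\SS}$ is fully faithful (a translation, through the subanalytic formalism, of the enhanced-ind-sheaf arguments of \cite{DK16}), and --- most delicate --- verifying that the comparison functor of \cite{Ito24a} genuinely matches the two tempered holomorphic coefficient complexes, a compatibility that ultimately rests on the comparison between Kashiwara--Schapira's tempered holomorphic functions and their realisation as subanalytic sheaves and on the good behaviour of the sheafification and ind-ification functors with respect to convolution along $\RR_\infty$.
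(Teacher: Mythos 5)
This theorem is not proved in the paper at all: it is quoted from Kashiwara's Takagi lecture \cite[\S 5.4]{Kas16}, so there is no in-paper argument to measure your proposal against; I can only compare it with the cited source and with the comparison machinery the paper does record. On its own terms your plan is sound, and it is essentially the route encoded in Theorem \ref{thm1.8}: there one has the fully faithful functor $\bfR_X^{\rmE,\sub}\colon\BEC(\CC_X^\sub)\hookrightarrow\BDC(\CC_{X\times\RR_\infty}^\sub)$, whose essential image plays the role of your stable subcategory $\SS$ (and its full faithfulness is exactly your claim that $Q|_{\SS}$ is fully faithful), the fully faithful comparison $I_X^\rmE\colon\BEC(\CC_X^\sub)\to\BEC(\I\CC_X)$ intertwining $\Sol_X^{\rmE,\sub}$ with $\Sol_X^{\rmE}$, and the identification $\Sol_X^{\T,\sub}(\cdot)[1]\simeq\bfR_X^{\rmE,\sub}\circ\Sol_X^{\rmE,\sub}$; feeding the full faithfulness of $\Sol_X^{\rmE}$ (Theorem \ref{irregRH_DK}, i.e.\ \cite[Thm.~9.5.3]{DK16}) through this chain yields precisely your Hom-bijections, with no circularity since neither \cite{DK16} nor the construction of $I_X^\rmE$ and $\bfR_X^{\rmE,\sub}$ in \cite{Ito24a} uses the statement being proved. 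This is, however, genuinely different from Kashiwara's own proof: in \cite{Kas16} the argument is direct and self-contained, running parallel to \cite{DK16} in the subanalytic framework and resting on the reconstruction theorem \cite[Thm.~6.3]{Kas16}, whereas your route obtains the statement as a corollary of \cite{DK16} plus the comparison results of \cite{Ito24a} --- logically fine, but it outsources the hard content (the reconstruction of $\M$ from its tempered solutions) rather than reproving it. Two bookkeeping caveats: the embedding actually used is the $\rihom^+$-type functor $\bfR_X^{\rmE,\sub}$, and a shift $[1]$ intervenes, so the stability you should impose on $\Sol_X^{\T}(\M)$ is of that type rather than invariance under convolution with $\CC_{\{t\geq0\}}$ (harmless, since either embedded copy of $\BEC(\CC_X^\sub)$ sits fully faithfully in $\BDC(\CC_{X\times\RR_\infty}^\sub)$); and the matching of the two tempered holomorphic coefficient complexes under $I_X^\rmE$, which you rightly flag as the delicate point, is exactly the compatibility verified in \cite{Ito24a}, so in a self-contained write-up that is the step you would have to prove rather than postulate.
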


In \cite{Ito24a},
the author defined enhanced subanalytic sheaves,
and explained a relation between enhanced ind-sheaves and enhanced subanalytic sheaves.
Let us denote by
$\BEC(\CC_X^\sub)$ the triangulated category of enhanced subanalytic sheaves on $X$
and denote by $\BEC_{\RR\mbox{\scriptsize -}c}(\CC_X^\sub)$
the one of $\RR$-constructible enhanced ind-sheaves on $X$.
See \cite[\S.\:3.3, Def.\:3.19]{Ito24a} for the definitions.
Then we have:

\begin{theorem}[{\cite[Thms.\:3.15, 3.20, 3.38, 3.39]{Ito24a}}]\label{thm1.8}
There exist fully faithful functors:
$$I^\rmE_X\colon \BEC(\CC_X^\sub)\hookrightarrow \BEC(\I\CC_X),\hspace{17pt}
\bfR_X^{\rmE, \sub}\colon \BEC(\CC_X^\sub)\hookrightarrow \BDC(\CC^\sub_{X\times \RR_\infty})$$
and the functor $I_X^\rmE$ induces an equivalence of categories:
$$I^\rmE_X\colon \BEC_{\RR\mbox{\scriptsize -}c}(\CC_X^\sub)\simto
\BEC_{\RR\mbox{\scriptsize -}c}(\I\CC_X).$$
Moreover, there exists a fully faithful functor
$$\Sol_X^{\rmE, \sub}\colon \BDChol(\D_X)^{\op}\hookrightarrow\BEC_{\RR\mbox{\scriptsize -}c}(\CC_X^\sub)$$
and the following diagram is commutative:
\[\xymatrix@M=7pt@R=25pt@C=60pt{
{}&{}&\BDC(\CC_{X\times\RR_\infty}^\sub)\\
\BDChol(\D_X)^\op
\ar@{^{(}->}[r]_-{\Sol_X^{\rmE, \sub}}
\ar@{^{(}->}[rru]
\ar@<0.6ex>@{}[rru]^-{\Sol_X^{\T, \sub}(\cdot)[1]}
\ar@{^{(}->}[rd]_-{\Sol_X^{\rmE}}
 & \BEC_{\RR\mbox{\scriptsize -}c}(\CC_X^\sub)\ar@{}[r]|-{\text{\large $\subset$}}
 \ar@<-1.0ex>@{->}[d]_-{I_X^\rmE}\ar@{}[d]|-\wr
  & \BEC(\CC_X^\sub)\ar@{^{(}->}[u]_-{\bfR_X^{\rmE, \sub}}
   \ar@<-1.0ex>@{^{(}->}[d]^-{I_X^\rmE}\\
{}&\BEC_{\RR\mbox{\scriptsize -}c}(\I\CC_X)\ar@{}[r]|-{\text{\large $\subset$}}
&\BEC(\I\CC_X).
}\]
\end{theorem}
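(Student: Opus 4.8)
The statement bundles four assertions, and the plan is to prove them in dependency order, using freely the equivalences of Theorems~\ref{irregRH_DK} and~\ref{irregRH_ana} together with Kashiwara's fully faithful functor $\Sol_X^\T$. First I would realize $\BEC(\CC_X^\sub)$, just as $\BEC(\I\CC_X)$ is realized, as a Verdier quotient $\BDC(\CC_{X\times\RR_\infty}^\sub)/\pi^{-1}\BDC(\CC_X^\sub)$ carrying the convolution $\Potimes$ along the bordered line $\RR_\infty$, with $\pi\colon X\times\RR_\infty\to X$ the projection. The natural functor $I_{X\times\RR_\infty}$ from subanalytic sheaves to ind-sheaves on $X\times\RR_\infty$ is fully faithful on bounded derived categories and commutes with $\pi^{-1}$, $\otimes$ and $\Potimes$, so it sends the ``constant in $t$'' subcategory into its ind-sheaf analogue and descends to a functor $I_X^\rmE\colon\BEC(\CC_X^\sub)\to\BEC(\I\CC_X)$; its full faithfulness comes down to checking that the $\Hom$-groups in the two quotients, computed by adjunction along $\pi$, are matched by $I_{X\times\RR_\infty}$, a bounded-above $\rhom$-computation along $\pi$. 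Dually, the quotient functor $\BDC(\CC_{X\times\RR_\infty}^\sub)\to\BEC(\CC_X^\sub)$ admits a fully faithful section $\bfR_X^{\rmE, \sub}$ --- given on a representative by convolution with $\CC_{\{t\geq0\}}^\sub$ --- realizing an enhanced subanalytic sheaf as a ``stable'' object; its full faithfulness rests on the same $\rhom$-vanishing along $\pi$, and its essential image is $\BECstb(\CC_X^\sub)$.

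Next I would fix $\RR$-constructibility on both sides: an object of $\BEC(\CC_X^\sub)$ (resp.\ of $\BEC(\I\CC_X)$) is $\RR$-constructible if, locally on $X$, it lies in the triangulated subcategory generated by the standard objects attached to pairs $(U,\varphi)$, where $U\subset X$ is subanalytic open and $\varphi\colon U\to\RR$ is subanalytic continuous --- the enhancements of the exponential twists $e^{\varphi}$ supported on $U$. Since $I_X^\rmE$ carries each such generator to the corresponding generator on the ind-sheaf side, it is essentially surjective onto $\BEC_{\RR\mbox{\scriptsize -}c}(\I\CC_X)$, and together with the full faithfulness of the previous paragraph this gives the equivalence $I_X^\rmE\colon\BEC_{\RR\mbox{\scriptsize -}c}(\CC_X^\sub)\simto\BEC_{\RR\mbox{\scriptsize -}c}(\I\CC_X)$. \textbf{I expect this to be the main obstacle:} one has to prove at once that full faithfulness survives the Verdier quotient and that these standard objects genuinely generate the $\RR$-constructible part --- the latter a normal-form/structure theorem obtained by subanalytic stratification, combined with the Kashiwara--Schapira comparison identifying $\RR$-constructible subanalytic sheaves with $\RR$-constructible ind-sheaves on $X\times\RR_\infty$.

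For $\Sol_X^{\rmE, \sub}$ I would imitate D'Agnolo--Kashiwara and set $\Sol_X^{\rmE, \sub}(\M):=\rhom^{\rmE, \sub}(\M,\sho_X^{\rmE, \sub})$, where $\sho_X^{\rmE, \sub}$ is the enhanced subanalytic sheaf of tempered holomorphic functions, built from the genuine subanalytic sheaf $\sho_X^{\rmt}$ of tempered holomorphic functions on $X$ equipped with the $t$-parameter and the factor $e^{t}$. As $I_{X\times\RR_\infty}$ sends $\sho_X^{\rmt}$ to the ind-sheaf of tempered holomorphic functions, one obtains a canonical isomorphism $I_X^\rmE\circ\Sol_X^{\rmE, \sub}\cong\Sol_X^\rmE$ of functors on $\BDChol(\D_X)^{\op}$; full faithfulness of $\Sol_X^{\rmE, \sub}$ then follows from that of $\Sol_X^\rmE$ (Theorem~\ref{irregRH_DK}) and of $I_X^\rmE$, and since the image of $\Sol_X^\rmE$ is $\RR$-constructible (Theorem~\ref{irregRH_DK}), the equivalence of the previous paragraph forces the image of $\Sol_X^{\rmE, \sub}$ to land in $\BEC_{\RR\mbox{\scriptsize -}c}(\CC_X^\sub)$.

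Finally, for the diagram: its lower triangle is precisely the isomorphism $I_X^\rmE\circ\Sol_X^{\rmE, \sub}\cong\Sol_X^\rmE$ just produced, while the middle square commutes because the left vertical arrow is the restriction of the right one (and is an equivalence by the second paragraph). The upper triangle, $\bfR_X^{\rmE, \sub}\circ\Sol_X^{\rmE, \sub}\cong\Sol_X^{\T, \sub}(\cdot)[1]$, I would obtain by identifying $\bfR_X^{\rmE, \sub}(\sho_X^{\rmE, \sub})$, which is now a subanalytic sheaf on $X\times\RR_\infty$, with the object computing Kashiwara's $\Sol_X^\T$, the shift $[1]$ being exactly the normalization discrepancy between the mapping cone defining the enhancement and Kashiwara's convention. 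All the identifications being functorial in $\M$, the whole diagram commutes.
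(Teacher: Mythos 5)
First, note that this paper contains no proof of Theorem \ref{thm1.8}: it is recalled verbatim from \cite[Thms.\:3.15, 3.20, 3.38, 3.39]{Ito24a}, so your proposal can only be measured against the construction carried out there. At the level of architecture you do follow it --- $\BEC(\CC_X^\sub)$ as a quotient of $\BDC(\CC^\sub_{X\times\RR_\infty})$ by the $\pi^{-1}$-objects, $I_X^\rmE$ descended from the fully faithful $I_{X\times\RR_\infty}$, a stabilization functor back into $\BDC(\CC^\sub_{X\times\RR_\infty})$, and $\Sol_X^{\rmE,\sub}$ defined through an enhanced subanalytic sheaf of tempered holomorphic functions so that $I_X^\rmE\circ\Sol_X^{\rmE,\sub}\simeq\Sol_X^{\rmE}$, which gives the lower triangle and the full faithfulness of $\Sol_X^{\rmE,\sub}$. (In \cite{Ito24a} the full faithfulness of $I_X^\rmE$ is obtained via the adjoint $J_X^\rmE$ and the isomorphism $J_X^\rmE\circ I_X^\rmE\simeq\id$ --- the fact invoked in Lemma \ref{lem2.3} above --- rather than by a direct comparison of Hom groups in the two quotients; that difference is mostly cosmetic.)

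There are, however, two genuine gaps. (i) Your $\bfR_X^{\rmE,\sub}$, ``convolution with $\CC^\sub_{\{t\geq0\}}$'', is not fully faithful: since the compactly supported cohomology of a closed half-line vanishes, one has $\CC_{\{t\geq0\}}\Potimes\CC_{\{t\leq0\}}\simeq0$, while the image of $\CC_{\{t\leq0\}}$ in $\BEC(\CC_X^\sub)$ is nonzero (indeed $(\CC_{\{t\geq0\}}\oplus\CC_{\{t\leq0\}})\Potimes\CC_{\{t\leq0\}}\simeq\CC_{\{t\leq0\}}\neq0$), so your functor kills nonzero objects. The correct stabilization uses both half-lines, e.g.\ $\Prihomsub(\CC_{\{t\geq0\}}\oplus\CC_{\{t\leq0\}},\,\cdot\,)$, the right adjoint of the quotient functor, and it is exactly this choice that produces the shift $[1]$ in the upper triangle; as written, your identification $\bfR_X^{\rmE,\sub}\circ\Sol_X^{\rmE,\sub}\simeq\Sol_X^{\T,\sub}(\cdot)[1]$ has nothing to rest on. (ii) The heart of the statement, the equivalence $I_X^\rmE\colon\BEC_{\RR\mbox{\scriptsize -}c}(\CC_X^\sub)\simto\BEC_{\RR\mbox{\scriptsize -}c}(\I\CC_X)$, you reduce to the claim that $\RR$-constructible objects are locally generated by exponential-type objects attached to pairs $(U,\varphi)$. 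That is not the definition (in \cite[Def.\:4.9.2]{DK16}, and correspondingly in \cite[Def.\:3.19]{Ito24a}, one requires, locally on $X$, $\pi^{-1}\CC_U\otimes K\simeq\CC_X^{\rmE}\Potimes\SF$ with $\SF$ an $\RR$-constructible sheaf on $X\times\RR_\infty$), and the generation statement is a nontrivial structure theorem that you neither prove nor reduce to the literature --- you yourself flag it as the main obstacle. It is also unnecessary: essential surjectivity follows from the definition just quoted together with the classical identification of $\RR$-constructible sheaves inside subanalytic sheaves and inside ind-sheaves, applied to the local presentations $\CC_X^{\rmE}\Potimes\SF$. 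As it stands, the central step of the theorem is assumed rather than proved.
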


Moreover, the author defined $\CC$-constructibility for enhanced subanalytic sheaves,
and prove that there exists an equivalence of categories  between
the triangulated category $\BEC_{\CC\mbox{\scriptsize -}c}(\CC_X^\sub)$
of $\CC$-constructible enhanced subanalytic sheaves on $X$
and that of holonomic $\D_X$-modules.
See \cite[Def.\:4.16]{Ito24b} for the definition of $\BEC_{\CC\mbox{\scriptsize -}c}(\CC_X^\sub)$.

\begin{theorem}[{\cite[Thms.\:4.19, 4.22, Prop.\:4.21]{Ito24b}}]\label{thm1.9}
The functor $I_X^\rmE$ induces an equivalence of categories:
$$I^\rmE_X\colon \BEC_{\CC\mbox{\scriptsize -}c}(\CC_X^\sub)\simto
\BEC_{\CC\mbox{\scriptsize -}c}(\I\CC_X).$$
Moreover, the functor $\Sol_{X}^{\rmE, \sub}$ induces an equivalence of categories:
\[\Sol_{X}^{\rmE, \sub} \colon \BDChol(\D_{X})^{\op}
\simto\BEC_{\CC\mbox{\scriptsize -}c}(\CC_{X}^\sub)\]
and the following diagram is commutative:
\[\xymatrix@R=25pt@C=60pt@M=7pt{
\BDChol(\D_X)^{\op}\ar@{->}[r]^\sim\ar@<1.0ex>@{}[r]^-{\Sol_X^{\rmE,\sub}}
 & \BEC_{\CC\mbox{\scriptsize -}c}(\CC_X^\sub)\\
\BDCrh(\D_X)^{\op}\ar@{->}[r]_-{\Sol_X}^-{\sim}\ar@{}[u]|-{\bigcup}
&\BDC_{\CC\mbox{\scriptsize -}c}(\CC_X).
\ar@{^{(}->}[u]_-{e_X^\sub\circ \rho_{X\ast}}
}\]
\end{theorem}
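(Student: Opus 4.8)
The plan is to derive Theorem~\ref{thm1.9} from Theorem~\ref{thm1.8} (the comparison of enhanced subanalytic sheaves with enhanced ind-sheaves) and Theorem~\ref{irregRH_ana} (the $\CC$-constructible irregular Riemann--Hilbert correspondence for enhanced ind-sheaves), after matching the notion of $\CC$-constructibility of \cite[Def.\:4.16]{Ito24b} with that of \cite[Def.\:3.19]{Ito20} through $I_X^\rmE$.

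\textbf{Step 1.} I would first prove that $I_X^\rmE$ restricts to an equivalence $\BEC_{\CC\mbox{\scriptsize -}c}(\CC_X^\sub)\simto\BEC_{\CC\mbox{\scriptsize -}c}(\I\CC_X)$. By Theorem~\ref{thm1.8} the functor $I_X^\rmE\colon\BEC(\CC_X^\sub)\hookrightarrow\BEC(\I\CC_X)$ is fully faithful, so its restriction to any full subcategory is fully faithful; it remains to show that $I_X^\rmE$ preserves and reflects $\CC$-constructibility and is essentially surjective onto $\BEC_{\CC\mbox{\scriptsize -}c}(\I\CC_X)$. Essential surjectivity is immediate, since a $\CC$-constructible enhanced ind-sheaf is in particular $\RR$-constructible and $I_X^\rmE$ induces an equivalence $\BEC_{\RR\mbox{\scriptsize -}c}(\CC_X^\sub)\simto\BEC_{\RR\mbox{\scriptsize -}c}(\I\CC_X)$ (Theorem~\ref{thm1.8}); so each $L\in\BEC_{\CC\mbox{\scriptsize -}c}(\I\CC_X)$ is $I_X^\rmE(K)$ for some $K\in\BEC_{\RR\mbox{\scriptsize -}c}(\CC_X^\sub)$, and one must see that $K$ is then $\CC$-constructible. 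For this I would compare \cite[Def.\:4.16]{Ito24b} and \cite[Def.\:3.19]{Ito20}: both describe $\CC$-constructible objects, locally on $X$, as those built up (by shifts and finite extensions) from exponential-type building blocks $\pi^{-1}\F\otimes\E$ indexed by a common complex analytic stratification of $X$, where $\F$ is a $\CC$-constructible sheaf on a stratum and $\E$ an exponential enhanced sheaf. Since $I_X^\rmE$ is compatible with $\pi^{-1}$, $\otimes$, $\Potimes$ and with (co)restriction to strata, it matches the building blocks on the two sides, giving that $K$ is $\CC$-constructible iff $I_X^\rmE(K)$ is, and hence the equivalence.

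\textbf{Step 2.} The commutative diagram of Theorem~\ref{thm1.8} yields a functorial isomorphism $I_X^\rmE\circ\Sol_X^{\rmE,\sub}\simeq\Sol_X^\rmE$ on $\BDChol(\D_X)^\op$; with Step~1 this forces $\Sol_X^{\rmE,\sub}$ to land in $\BEC_{\CC\mbox{\scriptsize -}c}(\CC_X^\sub)$. As $\Sol_X^\rmE\colon\BDChol(\D_X)^\op\simto\BEC_{\CC\mbox{\scriptsize -}c}(\I\CC_X)$ is an equivalence (Theorem~\ref{irregRH_ana}) and $I_X^\rmE\colon\BEC_{\CC\mbox{\scriptsize -}c}(\CC_X^\sub)\simto\BEC_{\CC\mbox{\scriptsize -}c}(\I\CC_X)$ is an equivalence by Step~1, the isomorphism $\Sol_X^{\rmE,\sub}\simeq(I_X^\rmE)^{-1}\circ\Sol_X^\rmE$ exhibits $\Sol_X^{\rmE,\sub}$ as an equivalence $\BDChol(\D_X)^\op\simto\BEC_{\CC\mbox{\scriptsize -}c}(\CC_X^\sub)$. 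For the displayed square, restrict to $\M\in\BDCrh(\D_X)^\op$: applying the fully faithful $I_X^\rmE$ together with the compatibility $I_X^\rmE\circ e_X^\sub\circ\rho_{X\ast}\simeq e_X\circ\iota_X$ of the natural embeddings (from \cite{Ito24a}), the required isomorphism $\Sol_X^{\rmE,\sub}(\M)\simeq e_X^\sub\rho_{X\ast}\Sol_X(\M)$ reduces to $\Sol_X^\rmE(\M)\simeq e_X\iota_X\Sol_X(\M)$, which is the commutative square of Theorem~\ref{irregRH_ana}; this in particular shows $e_X^\sub\circ\rho_{X\ast}$ carries $\BDC_{\CC\mbox{\scriptsize -}c}(\CC_X)$ into $\BEC_{\CC\mbox{\scriptsize -}c}(\CC_X^\sub)$, so the right-hand vertical arrow is well defined.

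The step I expect to be the main obstacle is the comparison of $\CC$-constructibility along $I_X^\rmE$ in Step~1. The rest is formal given Theorems~\ref{irregRH_ana} and~\ref{thm1.8}, but matching, stratum by stratum, the local normal forms defining $\BEC_{\CC\mbox{\scriptsize -}c}(\CC_X^\sub)$ with those defining $\BEC_{\CC\mbox{\scriptsize -}c}(\I\CC_X)$ calls for a careful local analysis near the boundary $X\times\{\infty\}$ of the bordered space $X\times\RR_\infty$ and a check that $I_X^\rmE$ intertwines all the sheaf operations appearing in those normal forms. Once this is established, the remaining arguments are routine.
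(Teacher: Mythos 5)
This theorem is not proved in the paper at all: it is imported verbatim from \cite[Thms.\:4.19, 4.22, Prop.\:4.21]{Ito24b}, so there is no internal proof to compare your argument with; the comparison can only be with the route taken in that reference, which your outline essentially mirrors (reduce everything to the statement that $I_X^\rmE$ preserves and reflects $\CC$-constructibility, then deduce the equivalence for $\Sol_X^{\rmE,\sub}$ and the square formally from Theorem \ref{thm1.8}, Theorem \ref{irregRH_ana} and the compatibility $I_X^\rmE\circ e_X^\sub\circ\rho_{X\ast}\simeq e_X\circ\iota_X$). Your Step 2 is correct and genuinely formal: once $I_X^\rmE$ is known to restrict to an equivalence $\BEC_{\CC\mbox{\scriptsize -}c}(\CC_X^\sub)\simto\BEC_{\CC\mbox{\scriptsize -}c}(\I\CC_X)$, the isomorphism $I_X^\rmE\circ\Sol_X^{\rmE,\sub}\simeq\Sol_X^\rmE$ forces $\Sol_X^{\rmE,\sub}$ to land in the $\CC$-constructible category and to be an equivalence, and full faithfulness of $I_X^\rmE$ reduces the displayed square to the one in Theorem \ref{irregRH_ana}.

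The gap is Step 1, which is not an auxiliary lemma but the first assertion of the theorem itself (i.e.\ \cite[Thm.\:4.19]{Ito24b}), and your treatment of it is a placeholder rather than a proof. The reduction of essential surjectivity to the $\RR$-constructible equivalence is fine, but the remaining claim --- that $K\in\BEC_{\RR\mbox{\scriptsize -}c}(\CC_X^\sub)$ is $\CC$-constructible if and only if $I_X^\rmE(K)$ is --- is exactly where all the content lies, and ``both definitions are built from exponential-type blocks and $I_X^\rmE$ is compatible with the operations'' only names what has to be checked. To make this an argument you would need the precise form of \cite[Def.\:4.16]{Ito24b} (which need not literally mirror \cite[Def.\:3.19]{Ito20}: the exponential objects on the subanalytic side are defined through subanalytic sheaves on $X\times\RR_\infty$, the normal forms involve ramification/real blow-up maps and the operations $\pi^{-1}$, $\Potimes$, proper push-forward along those maps, and restriction to the strata of a common complex analytic stratification), together with the statements from \cite{Ito24a} that $I^\rmE$ commutes with each of these operations and is local on $X$. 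None of these verifications appear in the proposal, so as written the proof of the first assertion reduces the theorem to itself; the rest of your plan is sound conditional on that point.
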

See \cite[Prop.\:4.21]{Ito24b} for the detail of the functor
$e_X^\sub\circ \rho_{X\ast}\colon
\BDC_{\CC\mbox{\scriptsize -}c}(\CC_X) \to
\BEC_{\CC\mbox{\scriptsize -}c}(\CC_X^\sub)$.

One can summarize the above results about the irregular Riemann--Hilbert correspondence
in the following commutative diagram:
\[\xymatrix@M=7pt@R=55pt@C=55pt{
{}&{}&{}&\BDC(\CC_{X\times\RR_\infty}^\sub) & {}\\
\BDChol(\D_X)^\op
\ar@{->}[r]_-{\Sol_X^{\rmE, \sub}}
  \ar@<-0.3ex>@{}[r]^-\sim
\ar@{^{(}->}[rrru]^-{\Sol_X^{\T, \sub}(\cdot)[1]}
\ar@{->}[rd]_-{\Sol_X^{\rmE}}
  \ar@<-1.0ex>@{}[rd]^-{\rotatebox{-25}{$\sim$}}
 & \BEC_{\CC\mbox{\scriptsize -}c}(\CC_X^\sub)
 \ar@{}[r]|-{\text{\large $\subset$}}
  \ar@<-1.0ex>@{->}[d]_-{I_X^\rmE}
 \ar@{}[d]|-\wr
 & \BEC_{\RR\mbox{\scriptsize -}c}(\CC_X^\sub)
 \ar@{}[r]|-{\text{\large $\subset$}}
 \ar@<-1.0ex>@{->}[d]_-{I_X^\rmE}
 \ar@{}[d]|-\wr
  & \BEC(\CC_X^\sub)
  \ar@{^{(}->}[u]_-{\bfR_X^{\rmE, \sub}}
  \ar@<-1.0ex>@{^{(}->}[d]_-{I_X^\rmE}\\
{}&\BEC_{\CC\mbox{\scriptsize -}c}(\I\CC_X)
\ar@{}[r]|-{\text{\large $\subset$}}
&\BEC_{\RR\mbox{\scriptsize -}c}(\I\CC_X)
\ar@{}[r]|-{\text{\large $\subset$}}
&\BEC(\I\CC_X).
}\]

\subsection{Algebraic Irregular Riemann--Hilbert Correspondence and Enhanced Subanalytic Sheaves}
In \cite{Ito24b}, the author proved an algebraic version of Theorem \ref{thm1.9}.
Let $X$ be a smooth algebraic variety over $\CC$ again,
and denote by $\tl{X}$ a smooth completion of $X$.
We denote by $\BEC_{\CC\mbox{\scriptsize -}c}(\CC_{X_\infty}^\sub)$
the triangulated category of algebraic $\CC$-constructible enhanced subanalytic sheaves
on a real analytic bordered space $X_\infty^\an = (X^\an, \tl{X}^\an)$.
See \cite[Defs.\:5.5, 5.9]{Ito24b} for the definition of algebraic $\CC$-constructible enhanced subanalytic sheaves.

\begin{theorem}[{\cite[Thm.\:5.6, 5.10, Prop.\:5.12]{Ito24b}}]
The functor $I_{X_\infty^\an}^\rmE$ induces an equivalence of categories:
$$I^\rmE_{X_\infty^\an}\colon \BEC_{\CC\mbox{\scriptsize -}c}(\CC_{X_\infty}^\sub)\simto
\BEC_{\CC\mbox{\scriptsize -}c}(\I\CC_{X_\infty}).$$
Moreover, there exists an equivalence of categories:
\[\Sol_{X_\infty}^{\rmE, \sub} \colon \BDChol(\D_{X})^{\op}
\simto\BEC_{\CC\mbox{\scriptsize -}c}(\CC_{X_\infty}^\sub)\]
and the following diagram is commutative:
\[\xymatrix@R=25pt@C=60pt@M=7pt{
\BDChol(\D_X)^{\op}\ar@{->}[r]^\sim\ar@<1.0ex>@{}[r]^-{\Sol_{X_\infty}^{\rmE,\sub}}
 & \BEC_{\CC\mbox{\scriptsize -}c}(\CC_{X_\infty}^\sub)\\
\BDCrh(\D_X)^{\op}\ar@{->}[r]_-{\Sol_X}^-{\sim}\ar@{}[u]|-{\bigcup}
&\BDC_{\CC\mbox{\scriptsize -}c}(\CC_X).
\ar@{^{(}->}[u]_-{e_{X_\infty^\an}^\sub\circ \rho_{X_\infty^\an\ast}}
}\]
\end{theorem}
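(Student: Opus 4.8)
The plan is to obtain both assertions by transporting the analytic results of Theorem~\ref{thm1.9} to the real analytic bordered space $X_\infty^\an=(X^\an,\tl{X}^\an)$ and combining them with the algebraic irregular Riemann--Hilbert correspondence for enhanced ind-sheaves (Theorem~\ref{irregRH_alg}). The three ingredients I would use are: (i) the functor $I^\rmE$ makes sense for an arbitrary real analytic bordered space and is fully faithful on enhanced subanalytic sheaves (Theorem~\ref{thm1.8}); (ii) over a complex manifold it restricts to the equivalence $I_X^\rmE$ on $\CC$-constructible objects (Theorem~\ref{thm1.9}); (iii) $I^\rmE$ commutes, up to natural isomorphism, with the operations entering the definitions of algebraic $\CC$-constructibility and of the enhanced solution functors, namely direct and inverse images along the open embedding $X^\an\hookrightarrow\tl{X}^\an$, tensoring with the relevant exponential enhanced objects, and the passage from the subanalytic to the ind-sheaf structure sheaf.

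First I would establish the equivalence $I^\rmE_{X_\infty^\an}\colon\BEC_{\CC\mbox{\scriptsize -}c}(\CC_{X_\infty}^\sub)\simto\BEC_{\CC\mbox{\scriptsize -}c}(\I\CC_{X_\infty})$. Full faithfulness is immediate from (i). For the two remaining points — that $I^\rmE_{X_\infty^\an}$ preserves $\CC$-constructibility and is essentially surjective onto $\CC$-constructible enhanced ind-sheaves — I would unwind the definitions in \cite[Defs.\:5.5, 5.9]{Ito24b}: algebraic $\CC$-constructibility is tested on the open part $X^\an$, where it reduces to (ii), together with conditions describing the behaviour along the normal crossing divisor $\tl{X}^\an\setminus X^\an$, which by (iii) are matched by $I^\rmE$. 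After passing to local coordinate charts of $\tl{X}^\an$ adapted to the divisor, this becomes a purely local verification resting on the analytic theory.

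Next I would define $\Sol_{X_\infty}^{\rmE,\sub}$ by mimicking the construction of $\Sol_{X_\infty}^\rmE$ in \cite[\S 3.2]{Ito21}, replacing the enhanced ind-sheaf of tempered holomorphic functions by its subanalytic-sheaf analogue, and then prove a canonical isomorphism $I^\rmE_{X_\infty^\an}\circ\Sol_{X_\infty}^{\rmE,\sub}\cong\Sol_{X_\infty}^\rmE$, the analytic case of which is contained in Theorem~\ref{thm1.8}. Together with the previous step and Theorem~\ref{irregRH_alg}, this shows that $\Sol_{X_\infty}^{\rmE,\sub}$ is an equivalence onto $\BEC_{\CC\mbox{\scriptsize -}c}(\CC_{X_\infty}^\sub)$. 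Commutativity of the regular square is then reduced, by applying the faithful functor $I^\rmE_{X_\infty^\an}$ and using the isomorphism just established, to the commutative diagram in Theorem~\ref{irregRH_alg}; here one additionally needs the bordered analogue $I^\rmE_{X_\infty^\an}\circ(e_{X_\infty^\an}^\sub\circ\rho_{X_\infty^\an\ast})\cong e_{X_\infty^\an}\circ\iota_{X_\infty^\an}$ of the analytic compatibility between the two natural embeddings of $\BDC_{\CC\mbox{\scriptsize -}c}(\CC_X)$.

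The main obstacle is the local analysis along the boundary divisor that underlies the first and third steps: one must check that the (somewhat technical) conditions defining the two versions of algebraic $\CC$-constructibility, and the two versions of the tempered holomorphic solution complex, correspond \emph{exactly} under $I^\rmE$ in a neighbourhood of $\tl{X}^\an\setminus X^\an$. Away from the divisor everything is the already-established analytic correspondence of Theorem~\ref{thm1.9}; all the genuinely new work is concentrated in the behaviour at infinity encoded by the bordered structure.
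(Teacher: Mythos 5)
The paper does not prove this statement at all: it is imported verbatim from \cite[Thms.\:5.6, 5.10, Prop.\:5.12]{Ito24b}, so there is no in-paper argument to compare against line by line. Your plan is, in substance, the reduction that the cited work (and the present paper's own Section 2 arguments, e.g.\ Lemmas \ref{lem2.19}, \ref{lem2.20} and Proposition 2.21) relies on: transport everything to the smooth completion and invoke the analytic equivalence of Theorem \ref{thm1.9} together with the enhanced ind-sheaf correspondence of Theorem \ref{irregRH_alg}. One point where your emphasis differs, and where your plan is heavier than necessary: algebraic $\CC$-constructibility for enhanced (subanalytic or ind-) sheaves on $X_\infty^\an$ is \emph{defined} through the extension $\bfE j_{X^\an_\infty!!}$ to $\tl{X}^\an$, so the ``genuinely new local analysis along the normal crossing divisor'' you anticipate is not needed; what is needed is only the commutation $I^\rmE_{\tl{X}^\an}\circ\bfE j_{X^\an_\infty!!}\simeq \bfE j_{X^\an_\infty!!}\circ I^\rmE_{X^\an_\infty}$ (available as \cite[Lem.\:3.19 (2)(ii)]{Ito24a}, and used in exactly this way in the proof of Lemma \ref{lem2.19}), after which the analytic equivalence on the compact manifold $\tl{X}^\an$ does all the work. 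Likewise, the isomorphism $I^\rmE_{X_\infty^\an}\circ\Sol_{X_\infty}^{\rmE,\sub}\simeq\Sol_{X_\infty}^{\rmE}$ follows formally from $J^\rmE\circ I^\rmE\simeq\id$, the $\CC$-constructibility of $\Sol_{X_\infty}^{\rmE}(\M)$ from \cite{Ito21}, and the equivalence $I^\rmE_{X_\infty^\an}$ on the $\CC$-constructible subcategory, rather than from a chart-by-chart comparison of the two tempered solution complexes. With that simplification your outline is a faithful reconstruction of the cited proof's route.
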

See \cite[Prop.\:4.21]{Ito24b} for the detail of the functor
$e_{X_\infty^\an}^\sub\circ \rho_{X_\infty^\an\ast}\colon
\BDC_{\CC\mbox{\scriptsize -}c}(\CC_X) \to
\BEC_{\CC\mbox{\scriptsize -}c}(\CC_X^\sub)$.
See \cite[Prop.\:5.21]{Ito24b} for the details.

One can summarize the above results about the algebraic irregular Riemann--Hilbert correspondence
in the following commutative diagram:
\[\xymatrix@M=7pt@R=55pt@C=55pt{
\BDChol(\D_X)^\op
\ar@{->}[r]_-{\Sol_{X_\infty}^{\rmE, \sub}}
  \ar@<-0.3ex>@{}[r]^-\sim
\ar@{->}[rd]_-{\Sol_{X_\infty}^{\rmE}}
  \ar@<-1.0ex>@{}[rd]^-{\rotatebox{-25}{$\sim$}}
 & \BEC_{\CC\mbox{\scriptsize -}c}(\CC_{X_\infty}^\sub)
 \ar@{}[r]|-{\text{\large $\subset$}}
  \ar@<-1.0ex>@{->}[d]_-{I_{X_\infty^\an}^\rmE}
 \ar@{}[d]|-\wr
  & \BEC(\CC_{X_\infty^\an}^\sub)
  \ar@<-1.0ex>@{^{(}->}[d]_-{I_{X_\infty^\an}^\rmE}\\
{}&\BEC_{\CC\mbox{\scriptsize -}c}(\I\CC_{X_\infty})
\ar@{}[r]|-{\text{\large $\subset$}}
&\BEC(\I\CC_{X_\infty^\an}).
}\]

In this paper, 
we will show that there exists a t-structure
on the triangulated category of $\CC$-constructible enhanced subanalytic sheaves 
whose heart is equivalent to the abelian category of holonomic $\D$-modules.
Although it may be known by experts, it is not in the literature to our knowledge. 
The main definitions are Definition \ref{main-def-1} for the analytic case, Definition \ref{main-def-2} for the algebraic case,
and main results are Theorems \ref{main-thm-1}, \ref{main-thm-2} for the analytic case,
Theorems \ref{main-thm-3}, \ref{main-thm-4} for the algebraic case.
Furthermore, we shall consider simple objects of its heart and minimal extensions of objects of its heart
in Section 2.3.
The main definitions are Definition \ref{main-def-3}, \ref{main-def-4} and
the main results are Propositions \ref{main-thm-5}, \ref{main-thm-6}. 

\section*{Acknowledgement}
Since this paper was written in the wake of
``7th Tunisian--Japanese Conference, Geometric and Harmonic Analysis on Homogeneous Spaces and
Applications --- in Honor of Professor Toshiyuki Kobayashi ---",
I am grateful the organizers Professor Ali Baklouti, Professor Hideyuki Ishi and 
local organizers (especially, Manar Isram Toumi, Rahma Jerbi and so on ... ).

I would like to thank Dr.\:Taito Tauchi of Aoyama Gakuin University
for many discussions and giving many comments.
I would also like to thank Dr.\:Yuichiro Tanaka of the University of Tokyo,
Dr.\:Masatoshi Kitagawa of the University of Tokyo and
Associate Professor Yasufumi Nitta of Tokyo University of Science
for his continued support and constant encouragement. 

The author would like to thank administrative staff of Tokyo University of Science
(in particular Department of Mathematics, Faculty of Science Division II),
for his continued support and constant encouragement.

Finally, I am sincerely grateful for continued supports and constant encouragements of Professor Toshiyuki Kobayashi.
He always told me what is important.
Moreover I learned from Professor Kobayashi his attitude toward studying mathematics.
I cannot thank him enough.
I would like to return the favor to him by growing up someday.

\section*{Funding}
This work was supported by Grant-in-Aid for Early-Career Scientists (No.\:22K13902), 
Japan Society for the Promotion of Science.

\section{Main Result}

\subsection{Analytic Case}
The main definition of this section is Definition \ref{main-def-1}
and main results of this section are Theorems \ref{main-thm-1}, \ref{main-thm-2}.

Let $X$ be a complex manifold
and denote by $\BEC_{\CC\mbox{\scriptsize -}c}(\CC_X^\sub)$
the triangulated category of $\CC$-constructible enhanced subanalytic sheaves.
See \cite[The paragraph before Lem.\:3.10]{Ito24a} for the definition of enhanced subanalytic sheaves,
\cite[Def.\:4.16]{Ito24b} for the definition of $\CC$-constructible enhanced subanalytic sheaves.
Let us recall that there exist functors
$$\rho_X^{-1}\colon \BDC(\CC_X^\sub)\to \BDC(\CC_X),\hspace{17pt}
\sh_X^\sub\colon \BEC(\CC_X^\sub)\to \BDC(\CC_X^\sub)$$
and
$$\rho_X^{-1}\circ \sh_X^\sub\colon \BEC(\CC_X^\sub)\to \BDC(\CC_X).$$
See \cite[p.5]{Pre08} and also \cite[\S\:3.1]{Ito24a} for the definition of $\rho_X^{-1}$,
\cite[The paragraph before Prop.\:3.23]{Ito24a} for the definition of $\sh_X^\sub$.

\begin{definition}\label{main-def-1}
Let us define full subcategories of $\BEC_{\CC\mbox{\scriptsize -}c}(\CC_X^\sub)$ by
\begin{align*}
{}^p\bfE^{\leq0}_{\CC\mbox{\scriptsize -}c}(\CC_X^\sub) &:=
\{K\in\BEC_{\CC\mbox{\scriptsize -}c}(\CC_X^\sub)\ |\
\rho_X^{-1}\(\sh_X^\sub(K)\)\in{}^p\bfD^{\leq0}_{\CC\mbox{\scriptsize -}c}(\CC_X)\},\\
{}^p\bfE^{\geq0}_{\CC\mbox{\scriptsize -}c}(\CC_X^\sub) &:=
\{K\in\BEC_{\CC\mbox{\scriptsize -}c}(\CC_X^\sub)\ |\
\rho_X^{-1}\(\sh_X^\sub(K)\)\in{}^p\bfD^{\geq0}_{\CC\mbox{\scriptsize -}c}(\CC_X)\}.
\end{align*}
Here, the pair $\big({}^p\bfD^{\leq0}_{\CC\mbox{\scriptsize -}c}(\CC_X),
{}^p\bfD^{\geq0}_{\CC\mbox{\scriptsize -}c}(\CC_X)\big)$
is the perverse t-structure on the triangulated category $\BDC_{\CC\mbox{\scriptsize -}c}(\CC_X)$
of $\CC$-constructible sheaves on $X$.
\end{definition}

Then there exists a relation between the pair
$\big({}^p\bfE^{\leq0}_{\CC\mbox{\scriptsize -}c}(\CC_X^\sub),
{}^p\bfE^{\geq0}_{\CC\mbox{\scriptsize -}c}(\CC_X^\sub)\big)$
and the t-structure  
$\big({}^p\bfE^{\leq0}_{\CC\mbox{\scriptsize -}c}(\I\CC_X),
{}^p\bfE^{\geq0}_{\CC\mbox{\scriptsize -}c}(\I\CC_X)\big)$
on $\BEC_{\CC\mbox{\scriptsize -}c}(\I\CC_X)$ which is defined in \cite[Def.\:4.2]{Ito20}.

\begin{proposition}\label{prop2.2}
For any $K\in \BEC_{\CC\mbox{\scriptsize -}c}(\CC_X^\sub)$,
we have
\begin{itemize}
\item[{\rm (1)}]
$K\in {}^p\bfE^{\leq0}_{\CC\mbox{\scriptsize -}c}(\CC_X^\sub)\
\Longleftrightarrow I_X^{\rmE}(K)\in {}^p\bfE^{\leq0}_{\CC\mbox{\scriptsize -}c}(\I\CC_X)$,

\item[{\rm (2)}]
$K\in {}^p\bfE^{\geq0}_{\CC\mbox{\scriptsize -}c}(\CC_X^\sub)\
\Longleftrightarrow I_X^{\rmE}(K)\in {}^p\bfE^{\geq0}_{\CC\mbox{\scriptsize -}c}(\I\CC_X)$.
\end{itemize}
\end{proposition}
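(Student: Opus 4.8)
The plan is to reduce the statement to the known relation between the sheafification functors on the enhanced subanalytic side and the enhanced ind-sheaf side, and then to the already-established t-structure on $\BEC_{\CC\mbox{\scriptsize -}c}(\I\CC_X)$ from Theorem \ref{irregRH_ito}. The key observation is that the perverse membership conditions in both Definition \ref{main-def-1} and \cite[Def.\:4.2]{Ito20} are phrased in terms of an ordinary $\CC$-constructible complex on $X$: on the subanalytic side it is $\rho_X^{-1}(\sh_X^\sub(K))$, and on the ind-sheaf side it is $\sh_X(I_X^\rmE(K))$. So it suffices to prove the compatibility
\[
\rho_X^{-1}\bigl(\sh_X^\sub(K)\bigr)\ \simeq\ \sh_X\bigl(I_X^\rmE(K)\bigr)
\quad\text{in }\BDC(\CC_X)
\]
functorially in $K\in\BEC(\CC_X^\sub)$. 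Once this isomorphism is in hand, both (1) and (2) are immediate: $K\in{}^p\bfE^{\leq0}_{\CC\mbox{\scriptsize -}c}(\CC_X^\sub)$ means by definition that $\rho_X^{-1}(\sh_X^\sub(K))\in{}^p\bfD^{\leq0}_{\CC\mbox{\scriptsize -}c}(\CC_X)$, which by the isomorphism is equivalent to $\sh_X(I_X^\rmE(K))\in{}^p\bfD^{\leq0}_{\CC\mbox{\scriptsize -}c}(\CC_X)$, which is exactly the defining condition for $I_X^\rmE(K)\in{}^p\bfE^{\leq0}_{\CC\mbox{\scriptsize -}c}(\I\CC_X)$; and similarly for ${}^p\bfE^{\geq0}$.

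First I would recall the precise definitions of the three functors involved: the functor $I_X^\rmE\colon\BEC(\CC_X^\sub)\hookrightarrow\BEC(\I\CC_X)$ from \cite[\S 3.3]{Ito24a}, the subanalytic enhanced sheafification $\sh_X^\sub\colon\BEC(\CC_X^\sub)\to\BDC(\CC_X^\sub)$, and the ind-sheaf enhanced sheafification $\sh_X\colon\BEC(\I\CC_X)\to\BDC(\CC_X)$ of \cite[Def.\:3.4]{DK21}, together with $\rho_X^{-1}\colon\BDC(\CC_X^\sub)\to\BDC(\CC_X)$. The natural strategy is to unwind all four as compositions involving the convolution/pushforward along $X\times\RR_\infty\to X$ and the natural functors relating subanalytic sheaves to ind-sheaves (e.g. the functor $I_{X\times\RR_\infty}$ relating $\BDC(\CC_{X\times\RR_\infty}^\sub)$ and $\BDC(\I\CC_{X\times\RR_\infty})$, and $\rho_*,\rho^{-1}$), and then to check that the relevant square of functors commutes. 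This kind of compatibility between $I_X^\rmE$ and the sheafification operations is essentially the content of the comparison results of \cite{Ito24a} (the commutative diagram in Theorem \ref{thm1.8} already encodes compatibility of $I_X^\rmE$ with $\bfR_X^{\rmE,\sub}$ and with the various embeddings), so I expect the isomorphism to follow by combining those with the known identity $\rho_X^{-1}\circ(\text{ind-to-sub sheafification})\simeq(\text{de-enhancement})$ at the non-enhanced level, i.e. the compatibility $\sh_X\simeq\rho_X^{-1}\circ\sh_X^\sub\circ(\text{relating functors})$ already used implicitly when the t-structure on $\BEC_{\CC\mbox{\scriptsize -}c}(\I\CC_X)$ was transported.

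The main obstacle will be bookkeeping rather than conceptual: one must track the extra variable $\RR_\infty$ and the bordered-space structure carefully through the definitions, and verify that $I_X^\rmE$ intertwines $\sh_X^\sub$ with $\sh_X$ up to applying $\rho_X^{-1}$ — in other words, that the square
\[
\xymatrix@C=50pt@R=25pt{
\BEC(\CC_X^\sub)\ar[r]^-{I_X^\rmE}\ar[d]_-{\sh_X^\sub}
 & \BEC(\I\CC_X)\ar[d]^-{\sh_X}\\
\BDC(\CC_X^\sub)\ar[r]_-{\rho_X^{-1}}
 & \BDC(\CC_X)
}
\]
commutes up to natural isomorphism. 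I would expect this to be either already implicit in \cite{Ito24a} or provable by the same convolution-unwinding arguments used there; if a direct reference is available it should simply be cited. One should also note at the outset that the statement only concerns $K\in\BEC_{\CC\mbox{\scriptsize -}c}(\CC_X^\sub)$, so by Theorem \ref{thm1.9} the object $I_X^\rmE(K)$ indeed lies in $\BEC_{\CC\mbox{\scriptsize -}c}(\I\CC_X)$ and the right-hand defining conditions make sense; but the sheafification compatibility itself should hold on the whole category $\BEC(\CC_X^\sub)$, which is cleaner to prove. With that square established, the proof of Proposition \ref{prop2.2} is a one-line translation of definitions.
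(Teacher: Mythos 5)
Your proposal is correct and takes essentially the same route as the paper: the paper's proof of Proposition \ref{prop2.2} is exactly your reduction to the commutative square $\rho_X^{-1}\circ\sh_X^\sub\simeq\sh_X\circ I_X^\rmE$ on all of $\BEC(\CC_X^\sub)$, which appears there as Lemma \ref{lem2.3}. The only difference is in how that square is established: rather than unwinding the convolution along $X\times\RR_\infty\to X$, the paper chains the identities $\sh_X\simeq\alpha_X\circ\Ish_X$, $\alpha_X\simeq\rho_X^{-1}\circ\bfR J_X$, $\bfR J_X\circ\Ish_X\simeq\sh_X^\sub\circ J_X^\rmE$ and $J_X^\rmE\circ I_X^\rmE\simeq\mathrm{id}$, all cited from \cite{Ito24a}, so your appeal to the comparison results of that reference is the intended justification.
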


\begin{proof}
This follows from Lemma \ref{lem2.3} below.
\end{proof}

\begin{lemma}\label{lem2.3}
For any $K\in\BEC(\CC_X^\sub)$,
there exists an isomorphism in $\BDC(\CC_X)$:
$$\rho_X^{-1}\(\sh_X^\sub(K)\)
\simeq \sh_X\left(I_X^\rmE(K)\right).$$
Namely, we have the following commutative diagram: 
\[\xymatrix@M=7pt@C=45pt{
\BEC(\CC_X^\sub)\ar@{->}[r]^-{I_X^\rmE}\ar@{->}[d]_-{\sh_X^\sub}
&
\BEC(\I\CC_X)\ar@{->}[d]^-{\sh_X}\\
\BDC(\CC_X^\sub)\ar@{->}[r]_-{\rho_X^{-1}}
&
\BDC(\CC_X).}
\]
\end{lemma}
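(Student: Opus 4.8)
The plan is to unwind the definitions of the four functors involved and reduce the assertion to standard compatibilities on the real analytic bordered space $X\times\RR_\infty$. Recall that $\BEC(\CC_X^\sub)$ and $\BEC(\I\CC_X)$ are defined by the same quotient construction from $\BDC(\CC_{X\times\RR_\infty}^\sub)$ and $\BDC(\I\CC_{X\times\RR_\infty})$ respectively (killing the essential image of $\pi^{-1}$, where $\pi\colon X\times\RR_\infty\to X$), and that $I_X^\rmE$ is induced by the natural functor $I_{X\times\RR_\infty}\colon \BDC(\CC_{X\times\RR_\infty}^\sub)\to\BDC(\I\CC_{X\times\RR_\infty})$ from subanalytic sheaves to ind-sheaves, which intertwines $\pi^{-1}$ on the two sides; this compatibility is part of the construction of $I_X^\rmE$ in \cite{Ito24a}. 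Likewise, both $\sh_X^\sub$ and $\sh_X$ are, by their definitions, composites of elementary operations performed on $X\times\RR_\infty$: a normalization step given by a convolution with a fixed kernel in the $\RR$-variable that produces a genuine object on $X\times\RR_\infty$, a direct image along $\pi$, and --- in the ind-sheaf case only --- an application of $\alpha_X$ turning the resulting ind-sheaf into an ordinary sheaf. The extra functor $\rho_X^{-1}$ appearing on the left-hand side of the claimed isomorphism plays precisely the part of $\alpha_X$, since $\sh_X^\sub$ stops one step earlier, in $\BDC(\CC_X^\sub)$.

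With this description in hand, the proof becomes an assembly of the following facts, all available from \cite{KS01, Pre08, Ito24a}: the functor $I_{X\times\RR_\infty}$ commutes with $\otimes$, with inverse images and (proper) direct images along morphisms of bordered spaces, and hence with the convolution product in the $\RR$-variable; and for any subanalytic bordered space $M$ there is a canonical isomorphism $\alpha_M\circ I_M\simeq\rho_M^{-1}$ of functors $\BDC(\CC_M^\sub)\to\BDC(\CC_M)$, expressing how subanalytic sheaves sit inside ind-sheaves. First I would pass from $K$ to a representative $\tl{K}$ on $X\times\RR_\infty$; then, using that $I_{X\times\RR_\infty}$ commutes with the convolution defining the normalization and with $\bfR\pi_{*}$, move $I$ past these operations; finally apply $\alpha_X\circ I_X\simeq\rho_X^{-1}$ on $X$. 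Reading the resulting chain of isomorphisms backwards gives $\sh_X(I_X^\rmE(K))\simeq\rho_X^{-1}(\sh_X^\sub(K))$, functorially in $K$, which is exactly the commutativity of the displayed square.

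The step I expect to be the main obstacle is the bookkeeping required to match the conventions in the definition of $\sh_X$ in \cite{DK21} (phrased for enhanced ind-sheaves) with those in the definition of $\sh_X^\sub$ in \cite{Ito24a} (phrased for enhanced subanalytic sheaves): one must check that inserting $\rho_X^{-1}$ on the left exactly accounts for the passage from ind-sheaves to ordinary sheaves built into $\sh_X$, with no spurious shift or orientation twist, and that the two normalization kernels correspond to one another under $I_{X\times\RR_\infty}$. Once the normalizations are aligned, everything reduces to the formal diagram chase of the previous paragraph; this is also precisely the input invoked in the proof of Proposition \ref{prop2.2}.
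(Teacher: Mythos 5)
There is a genuine gap. Your reduction rests on pushing $I_{X\times\RR_\infty}$ through the operations that define the two sheafification functors, but those functors are not of the shape you describe: $\sh_X$ (resp.\ $\sh_X^\sub$) is not ``convolution with a kernel, then $\bfR\pi_{*}$, then $\alpha_X$''. Its definition involves $\rihom$ against the kernel $\CC_{\{t\geq0\}}\oplus\CC_{\{t\leq0\}}$ followed by the direct image $\bfR\pi_{*}$ along the \emph{non-proper} projection $X\times\RR_\infty\to X$ (and only then $\alpha_X$ in the ind-sheaf case). These are right-adjoint--type operations, whereas $I$ is a left adjoint (its right adjoint being $\bfR J$), so the compatibilities you invoke --- $I$ commutes with $\otimes$, inverse images, proper direct images, hence with $\Potimes$ --- do not cover the steps you actually need. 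Concretely, your argument requires $I\circ\bfR\pi_{*}\simeq\bfR\pi_{*}\circ I$ and $I\circ\rihom^{\sub}(\CC_{\{t\geq0\}}\oplus\CC_{\{t\leq0\}},\,\cdot\,)\simeq\rihom(\,\cdot\,)\circ I$, neither of which is formal nor among the cited standard facts; the direct-image compatibility for $I$ is available for $\bfR f_{!!}$, not for $\bfR f_{*}$ along $\pi$, where sections with support unbounded in $t$ make the two genuinely different. This is exactly the non-formal content of the lemma, and asserting it as bookkeeping leaves the proof incomplete. (Your other ingredient, $\alpha_X\circ I_X\simeq\rho_X^{-1}$, is fine and is essentially the identity the paper also uses.)

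The paper's proof avoids the problem by staying on the right-adjoint side throughout: writing $\sh_X=\alpha_X\circ\Ish_X$, using $\alpha_X\simeq\rho_X^{-1}\circ\bfR J_X$, the already-established commutation of the ind-sheafification with the right adjoint, $\bfR J_X\circ\Ish_X\simeq\sh_X^{\sub}\circ J_X^{\rmE}$, and finally $J_X^{\rmE}\circ I_X^{\rmE}\simeq\id$ by full faithfulness. If you want to keep your strategy, you would have to prove the commutation of $I$ with $\rihom$ against the kernel and with $\bfR\pi_{*}$ in this setting, which amounts to redoing that cited result; the cleaner repair is to replace the attempted commutation of $I$ with the defining operations by the corresponding statements for $J$, exactly as in the paper.
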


\begin{proof}
Let us recall that
there exists a fully faithful functor $$I_X\colon \BDC(\CC_X^\sub)\hookrightarrow\BDC(\I\CC_X)$$
from the triangulated category $\BDC(\CC_X^\sub)$ of subanalytic sheaves on $X$ to
the triangulated category $\BDC(\I\CC_X)$ of ind-sheaves on $X$
which has the right adjoint functor
$$\bfR J_X\colon \BDC(\I\CC_X) \to \BDC(\CC_X^\sub).$$
See \cite[Prop.\:3.6]{Ito24a} for the details.
We also recall that there exists a functor $$\Ish_X\colon \BEC(\I\CC_X)\to \BDC(\I\CC_X)$$
such that $\sh_X = \alpha_X\circ \Ish_X.$
Here, $$\alpha_X\colon \BDC(\I\CC_X)\to \BDC(\CC_X)$$ is
an exact left adjoint of the fully faithful functor $$\iota_X\colon \BDC(\CC_X)\hookrightarrow \BDC(\I\CC_X).$$
See \cite[p.35]{DK21} for the definition of $\Ish_X$,
\cite[Def.\:3.3.18]{KS01} for the definition of $\alpha_X$.

Let $K\in\BEC(\CC_X^\sub)$.
Then we have isomorphisms in $\BDC(\CC_X)$:
\begin{align*}
\sh_X\(I_X^\rmE(K)\) &\simeq \alpha_X\(\Ish_X\(I_X^\rmE(K)\)\)\\
&\simeq
\rho_X^{-1}\(\bfR J_X\(\Ish_X\(I_X^\rmE(K)\)\)\)\\
&\simeq
\rho_X^{-1}\(\sh_X^{\sub}\(J_X^\rmE\(I_X^\rmE(K)\)\)\)\\
&\simeq
\rho_X^{-1}\(\sh_X^{\sub}(K)\),
\end{align*}
where in the second isomorphism we used \cite[Prop.3.7(3)(ii)]{Ito24a},
in the third isomorphism we used \cite[Prop.3.26(3)]{Ito24a}
and in the forth isomorphism we used \cite[Thm.3.18(1)]{Ito24a}.

The proof is completed.
\end{proof}

Then we obtain the following two theorems which are the main theorems of this paper.
\begin{theorem}\label{main-thm-1}
The pair $\big({}^p\bfE^{\leq0}_{\CC\mbox{\scriptsize -}c}(\CC_X^\sub),
{}^p\bfE^{\geq0}_{\CC\mbox{\scriptsize -}c}(\CC_X^\sub)\big)$
is a t-structure on $\BEC_{\CC\mbox{\scriptsize -}c}(\CC_X^\sub)$
and its heart
$$\Perv(\CC_X^\sub) :=
{}^p\bfE^{\leq0}_{\CC\mbox{\scriptsize -}c}(\CC_X^\sub)\cap
{}^p\bfE^{\geq0}_{\CC\mbox{\scriptsize -}c}(\CC_X^\sub)$$
is equivalent to the abelian category $\Perv(\I\CC_X)$ which is defined in \cite[Thm.\:4.5]{Ito20}{\rm :}
$$I_X^{\rmE}\colon \Perv(\CC_X^\sub)\simto \Perv(\I\CC_X).$$
\end{theorem}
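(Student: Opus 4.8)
The plan is to transport the t-structure along the equivalence $I_X^{\rmE}$ and then recognize the result as the one defined by Definition \ref{main-def-1}.

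First I would recall from Theorem \ref{thm1.9} that $I_X^{\rmE}\colon \BEC_{\CC\mbox{-}c}(\CC_X^\sub)\simto \BEC_{\CC\mbox{-}c}(\I\CC_X)$ is an equivalence of triangulated categories, and from Theorem \ref{irregRH_ito} that the pair $\big({}^p\bfE^{\leq0}_{\CC\mbox{-}c}(\I\CC_X),{}^p\bfE^{\geq0}_{\CC\mbox{-}c}(\I\CC_X)\big)$ is a t-structure on $\BEC_{\CC\mbox{-}c}(\I\CC_X)$. Since a triangulated equivalence transports a t-structure to a t-structure (the axioms --- orthogonality ${}^p\bfE^{\leq0}\perp {}^p\bfE^{\geq1}$, shift-stability, and the existence of truncation triangles --- are all preserved by an exact equivalence), the pair of full subcategories $\big((I_X^{\rmE})^{-1}({}^p\bfE^{\leq0}_{\CC\mbox{-}c}(\I\CC_X)),(I_X^{\rmE})^{-1}({}^p\bfE^{\geq0}_{\CC\mbox{-}c}(\I\CC_X))\big)$ is automatically a t-structure on $\BEC_{\CC\mbox{-}c}(\CC_X^\sub)$, and $I_X^{\rmE}$ restricts to an equivalence between its heart and $\Perv(\I\CC_X)$.

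Second, I would invoke Proposition \ref{prop2.2}: it asserts precisely that $K\in{}^p\bfE^{\leq0}_{\CC\mbox{-}c}(\CC_X^\sub)$ if and only if $I_X^{\rmE}(K)\in{}^p\bfE^{\leq0}_{\CC\mbox{-}c}(\I\CC_X)$, and likewise for ${}^p\bfE^{\geq0}$. In other words, the full subcategories of Definition \ref{main-def-1} coincide with the transported subcategories $(I_X^{\rmE})^{-1}({}^p\bfE^{\leq0}_{\CC\mbox{-}c}(\I\CC_X))$ and $(I_X^{\rmE})^{-1}({}^p\bfE^{\geq0}_{\CC\mbox{-}c}(\I\CC_X))$. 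Combining with the previous step, $\big({}^p\bfE^{\leq0}_{\CC\mbox{-}c}(\CC_X^\sub),{}^p\bfE^{\geq0}_{\CC\mbox{-}c}(\CC_X^\sub)\big)$ is a t-structure whose heart $\Perv(\CC_X^\sub)$ is sent by $I_X^{\rmE}$ equivalently onto $\Perv(\I\CC_X)$. This yields the desired equivalence $I_X^{\rmE}\colon \Perv(\CC_X^\sub)\simto\Perv(\I\CC_X)$.

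The only genuine content is thus already isolated in Proposition \ref{prop2.2}, which in turn rests on Lemma \ref{lem2.3} --- the compatibility $\rho_X^{-1}\circ\sh_X^\sub\simeq\sh_X\circ I_X^{\rmE}$; the rest is the formal observation that t-structures pull back along equivalences. If one wanted a self-contained argument I would spell out the transport of the two truncation functors: given $K\in\BEC_{\CC\mbox{-}c}(\CC_X^\sub)$, apply the truncation triangle of $I_X^{\rmE}(K)$ in $\BEC_{\CC\mbox{-}c}(\I\CC_X)$, and since $I_X^{\rmE}$ is essentially surjective with quasi-inverse, lift this triangle back to a distinguished triangle in $\BEC_{\CC\mbox{-}c}(\CC_X^\sub)$ with the two outer terms in ${}^p\bfE^{\leq0}_{\CC\mbox{-}c}(\CC_X^\sub)$ and ${}^p\bfE^{\geq1}_{\CC\mbox{-}c}(\CC_X^\sub)$ respectively; the Hom-vanishing ${}^p\bfE^{\leq0}_{\CC\mbox{-}c}(\CC_X^\sub)\perp{}^p\bfE^{\geq1}_{\CC\mbox{-}c}(\CC_X^\sub)$ follows from full faithfulness of $I_X^{\rmE}$ together with the analogous vanishing for ind-sheaves. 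The main (and essentially only) obstacle is therefore verifying that the description in Definition \ref{main-def-1} in terms of $\rho_X^{-1}\circ\sh_X^\sub$ matches the ind-sheaf side in terms of $\sh_X$, which is exactly what Lemma \ref{lem2.3} provides; everything downstream is formal.
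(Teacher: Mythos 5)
Your argument is correct and is essentially the paper's own proof: the paper likewise deduces the t-structure by combining Proposition \ref{prop2.2} (resting on Lemma \ref{lem2.3}) with the ind-sheaf t-structure of \cite[Thm.\:4.5]{Ito20} and the equivalence $I_X^{\rmE}$ of \cite[Thm.\:4.19]{Ito24b}, i.e.\ by transporting the t-structure along the equivalence. Your explicit verification of the transported truncation triangles and orthogonality just spells out what the paper leaves as a formal remark.
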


\begin{proof}
The first assertion follows from Proposition \ref{prop2.2}, \cite[Thm.\:4.5]{Ito20} and \cite[Thm.\:4.19]{Ito24b}.
The second assertion follows from Proposition \ref{prop2.2} and \cite[Thm.\:4.19]{Ito24b}.
\end{proof}

Let us denote by $\(\bfD^{\leq0}_{\rm hol}(\D_X), \bfD^{\leq0}_{\rm hol}(\D_X)\)$
the standard t-structure on $\BDChol(\D_X)$
such that
$\Modhol(\D_X) = \bfD^{\leq0}_{\rm hol}(\D_X)\cap \bfD^{\leq0}_{\rm hol}(\D_X)$.
\begin{theorem}\label{main-thm-2}
For any $\M\in\BDChol(\D_X)$,
we have
\begin{itemize}
\item[{\rm (1)}]
$\M\in \bfD^{\leq0}_{\rm hol}(\D_X)\
\Leftrightarrow \Sol_X^{\rmE, \sub}(\M)\in {}^p\bfE^{\geq0}_{\CC\mbox{\scriptsize -}c}(\CC_X^\sub)$,

\item[{\rm (2)}]
$\M\in \bfD^{\leq0}_{\rm hol}(\D_X)\
\Leftrightarrow \Sol_X^{\rmE, \sub}(\M)\in {}^p\bfE^{\leq0}_{\CC\mbox{\scriptsize -}c}(\CC_X^\sub)$.
\end{itemize}
Moreover, the functor $\Sol_X^{\rmE, \sub}$ induces an equivalence of categories:
$$\Sol_X^{\rmE, \sub}(\cdot)[d_X]\colon \Modhol(\D_X)^\op\simto \Perv(\CC_X^\sub).$$
\end{theorem}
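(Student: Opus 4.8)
The plan is to transport the analogous statement for enhanced ind-sheaves (Theorem \ref{irregRH_ito}) across the equivalence $I_X^{\rmE}$ using the commutativity established in Theorem \ref{thm1.9} together with Proposition \ref{prop2.2}. First I would observe that by Theorem \ref{thm1.9} the functor $\Sol_X^{\rmE,\sub}$ factors as $I_X^{\rmE}\circ(-)$ composed with $\Sol_X^{\rmE}$; more precisely, since $I_X^{\rmE}\colon\BEC_{\CC\mbox{\scriptsize -}c}(\CC_X^\sub)\simto\BEC_{\CC\mbox{\scriptsize -}c}(\I\CC_X)$ is an equivalence of categories and the diagram relating $\Sol_X^{\rmE,\sub}$ and $\Sol_X^{\rmE}$ commutes (that is, $I_X^{\rmE}\circ\Sol_X^{\rmE,\sub}\simeq\Sol_X^{\rmE}$), we get for $\M\in\BDChol(\D_X)$ a chain of equivalences: $\M\in\bfD^{\leq0}_{\rm hol}(\D_X)$ holds if and only if $\Sol_X^{\rmE}(\M)\in{}^p\bfE^{\geq0}_{\CC\mbox{\scriptsize -}c}(\I\CC_X)$, which by Theorem \ref{irregRH_ito} (or rather its underlying statement \cite[Thm.\:4.5]{Ito20}) is exactly the characterization of the perverse co-support condition on the ind-sheaf side. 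Then applying Proposition \ref{prop2.2}(2), which says $\Sol_X^{\rmE,\sub}(\M)\in{}^p\bfE^{\geq0}_{\CC\mbox{\scriptsize -}c}(\CC_X^\sub)$ if and only if $I_X^{\rmE}(\Sol_X^{\rmE,\sub}(\M))\simeq\Sol_X^{\rmE}(\M)\in{}^p\bfE^{\geq0}_{\CC\mbox{\scriptsize -}c}(\I\CC_X)$, yields assertion (1); assertion (2) is entirely symmetric, using Proposition \ref{prop2.2}(1) and the corresponding support-condition characterization on the ind-sheaf side.

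For the final claim about the equivalence of hearts, I would combine (1) and (2): an object $\M$ lies in $\Modhol(\D_X)$, i.e. $\M\in\bfD^{\leq0}_{\rm hol}(\D_X)\cap\bfD^{\geq0}_{\rm hol}(\D_X)$, precisely when $\Sol_X^{\rmE,\sub}(\M)$ lies in ${}^p\bfE^{\leq0}_{\CC\mbox{\scriptsize -}c}(\CC_X^\sub)\cap{}^p\bfE^{\geq0}_{\CC\mbox{\scriptsize -}c}(\CC_X^\sub)=\Perv(\CC_X^\sub)$ — here one must be slightly careful with the degree shift, since $\Sol_X^{\rmE,\sub}$ carries holonomic modules (with the standard t-structure) to perverse objects only after the shift $[d_X]$, so that $\Sol_X^{\rmE,\sub}(\cdot)[d_X]$ restricts to a functor $\Modhol(\D_X)^\op\to\Perv(\CC_X^\sub)$. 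That this restriction is an equivalence follows because $\Sol_X^{\rmE,\sub}$ is already an equivalence $\BDChol(\D_X)^\op\simto\BEC_{\CC\mbox{\scriptsize -}c}(\CC_X^\sub)$ by Theorem \ref{thm1.9}, and an equivalence of triangulated categories that is t-exact for the given t-structures (up to the fixed shift) necessarily restricts to an equivalence of hearts; alternatively, one reads it off directly from the commutative square in Theorem \ref{main-thm-1} together with the corresponding square in Theorem \ref{irregRH_ito}, giving a commutative diagram of the four functors $\Sol_X^{\rmE,\sub}(\cdot)[d_X]$, $I_X^{\rmE}$, $\Sol_X^{\rmE}(\cdot)[d_X]$ and the identity on $\Perv(\I\CC_X)$, in which three of the arrows are known equivalences, forcing the fourth to be one as well.

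I expect the only genuinely delicate point to be bookkeeping: matching the t-structure conventions (which side is $\leq 0$ versus $\geq 0$, where the duality functor enters the definition of ${}^p\bfE^{\geq0}$, and whether $\Sol$ being contravariant swaps support and co-support conditions), and keeping track of the shift $[d_X]$ consistently throughout. None of this requires new ideas beyond what is already recorded in Theorem \ref{thm1.9}, Proposition \ref{prop2.2}, Theorem \ref{main-thm-1}, and \cite[Thm.\:4.5]{Ito20}; the content of the proof is that these four inputs assemble, and the proof I would write is essentially ``combine Proposition \ref{prop2.2}, Theorem \ref{main-thm-1}, Theorem \ref{thm1.9}, and \cite[Thm.\:4.5]{Ito20},'' spelling out the two biconditionals as above and then intersecting them to obtain the equivalence of hearts.
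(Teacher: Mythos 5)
Your proposal matches the paper's proof: assertions (1) and (2) are obtained by transporting the characterization from \cite[Thm.\:4.5]{Ito20} through Proposition \ref{prop2.2} and the compatibility $I_X^{\rmE}\circ\Sol_X^{\rmE,\sub}\simeq\Sol_X^{\rmE}$ (which the paper cites as \cite[Thm.\:3.39]{Ito24a}, i.e.\ the diagram in Theorem \ref{thm1.8} rather than Theorem \ref{thm1.9}, a harmless citation slip on your part), and the equivalence of hearts then follows from (1), (2) and the equivalence $\Sol_X^{\rmE,\sub}$ of \cite[Thm.\:4.19]{Ito24b}, exactly as you describe.
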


\begin{proof}
The assertions (1) and (2) follow from Proposition \ref{prop2.2},
\cite[Thm.\:4.5]{Ito20} and \cite[Thm.\:3.39]{Ito24a}.
The last assertion follows from (1), (2) and \cite[Thm.\:4.19]{Ito24b}.
\end{proof}

We shall call an object of $\Perv(\CC_X^\sub)$ an enhanced perverse subanalytic sheaf.
From now on we prove some properties of enhanced perverse subanalytic sheaves.

Let us denote by $\rmD_X^{\rmE, \sub}\colon \BEC(\CC_X^\sub)^\op\to\BEC(\CC_X^\sub)$
the duality functor for enhanced subanalytic sheaves.
See \cite[The paragraph before Lem.\:3.24]{Ito24a} for the definition.
\begin{proposition}
The duality functor $\rmD_X^{\rmE, \sub}$ induces an equivalence of categories:
$$\rmD_X^{\rmE,\sub} : \Perv(\CC_X^\sub)^{\op}\simto\Perv(\CC_X^\sub).$$
\end{proposition}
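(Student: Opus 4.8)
The plan is to transport the statement through the equivalence $I_X^{\rmE}$ to the already‑established duality on enhanced ind‑sheaves. By Theorem \ref{main-thm-1}, the functor $I_X^{\rmE}$ restricts to an equivalence $\Perv(\CC_X^\sub)\simto\Perv(\I\CC_X)$, and on $\Perv(\I\CC_X)$ the duality functor $\rmD_X^{\rmE}$ is known to induce an equivalence $\Perv(\I\CC_X)^{\op}\simto\Perv(\I\CC_X)$; this is part of the content of \cite[Thm.\:4.5]{Ito20}, where the second t‑structure condition is phrased precisely via $\rmD_X^{\rmE}$. So it suffices to know that $I_X^{\rmE}$ intertwines the two duality functors, i.e. that there is an isomorphism of functors $I_X^{\rmE}\circ\rmD_X^{\rmE,\sub}\simeq\rmD_X^{\rmE}\circ(I_X^{\rmE})^{\op}$ on $\BEC(\CC_X^\sub)$ (or at least on the $\CC$‑constructible subcategory). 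Granting that, the composite $(I_X^{\rmE})^{-1}\circ\rmD_X^{\rmE}\circ I_X^{\rmE}$ equals $\rmD_X^{\rmE,\sub}$ on $\Perv(\CC_X^\sub)$, and since each of the three factors is an (anti)equivalence of the relevant hearts, so is $\rmD_X^{\rmE,\sub}$.

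The key steps, in order, are: first, record that $\rmD_X^{\rmE,\sub}$ preserves $\CC$‑constructibility, so that it makes sense to ask the question on $\BEC_{\CC\mbox{\scriptsize -}c}(\CC_X^\sub)$ — this should follow from the duality being compatible with the six operations and with the characterization of $\CC$‑constructibility in \cite{Ito24b}, or simply by transporting via $I_X^{\rmE}$ and using that $\rmD_X^{\rmE}$ preserves $\BEC_{\CC\mbox{\scriptsize -}c}(\I\CC_X)$. Second, establish the compatibility $I_X^{\rmE}\circ\rmD_X^{\rmE,\sub}\simeq\rmD_X^{\rmE}\circ(I_X^{\rmE})^{\op}$; this is a commutation of an embedding with a duality, and the natural route is to unwind the definitions of both duality functors (each built from $\rhom$ into the appropriate dualizing complex and a tensor with an orientation/shift) and to invoke the projection‑formula and adjunction properties of $I_X^{\rmE}$, $\bfR_X^{\rmE,\sub}$, $J_X^{\rmE}$ already collected in \cite[\S 3]{Ito24a}. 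Third, combine with Theorem \ref{main-thm-1} and the duality part of \cite[Thm.\:4.5]{Ito20} to conclude that $\rmD_X^{\rmE,\sub}$ exchanges ${}^p\bfE^{\leq0}_{\CC\mbox{\scriptsize -}c}(\CC_X^\sub)$ and ${}^p\bfE^{\geq0}_{\CC\mbox{\scriptsize -}c}(\CC_X^\sub)$, hence preserves the heart $\Perv(\CC_X^\sub)$; finally, since $\rmD_X^{\rmE,\sub}$ is its own quasi‑inverse up to the biduality isomorphism $\rmD_X^{\rmE,\sub}\circ\rmD_X^{\rmE,\sub}\simeq\id$ on $\CC$‑constructible objects (again from \cite{Ito24a}), it is an anti‑equivalence of $\Perv(\CC_X^\sub)$.

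Alternatively, and perhaps more cleanly, one can avoid the functorial‑compatibility step by arguing directly at the level of t‑structures: by Definition \ref{main-def-1} it is enough to show that $\rho_X^{-1}\bigl(\sh_X^\sub(\rmD_X^{\rmE,\sub}K)\bigr)\simeq \rmD_X\bigl(\rho_X^{-1}(\sh_X^\sub K)\bigr)$ for $K\in\BEC_{\CC\mbox{\scriptsize -}c}(\CC_X^\sub)$, where $\rmD_X$ is the usual Verdier duality on $\BDC_{\CC\mbox{\scriptsize -}c}(\CC_X)$; for then, since $\rmD_X$ exchanges ${}^p\bfD^{\leq0}_{\CC\mbox{\scriptsize -}c}(\CC_X)$ and ${}^p\bfD^{\geq0}_{\CC\mbox{\scriptsize -}c}(\CC_X)$, the claim follows immediately. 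This isomorphism in turn reduces, via Lemma \ref{lem2.3}, to the corresponding statement $\sh_X(\rmD_X^{\rmE}L)\simeq\rmD_X(\sh_X L)$ for enhanced ind‑sheaves, which is the content of \cite[Thm.\:4.5]{Ito20} together with the known compatibility of $\sh_X$ with duality (e.g. as in \cite{DK21}). Either way the substantive point — and the main obstacle — is the commutation of the enhancement/sheafification functors with the two duality functors; once that bookkeeping is in place the rest is formal. I expect the verification that $\rmD_X^{\rmE,\sub}$ commutes with $I_X^{\rmE}$ (equivalently with $\rho_X^{-1}\circ\sh_X^\sub$ up to the standard Verdier duality) to be the only place requiring genuine care, since it involves tracking the auxiliary variable $\RR_\infty$ and the stabilization conventions in the definition of enhanced subanalytic sheaves.
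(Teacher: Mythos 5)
Your proposal takes essentially the same route as the paper: the paper deduces the proposition from Lemma \ref{lem2.7} (that $\rmD_X^{\rmE,\sub}$ exchanges ${}^p\bfE^{\leq0}_{\CC\mbox{\scriptsize -}c}(\CC_X^\sub)$ and ${}^p\bfE^{\geq0}_{\CC\mbox{\scriptsize -}c}(\CC_X^\sub)$), proved exactly by transporting through $I_X^\rmE$ via Proposition \ref{prop2.2}, invoking the enhanced ind-sheaf statement \cite[Lem.\:4.1]{Ito20}, and using the compatibility $I_X^\rmE\bigl(\rmD_X^{\rmE,\sub}(K)\bigr)\simeq\rmD_X^\rmE\bigl(I_X^\rmE(K)\bigr)$ on $\CC$-constructible objects, which the paper obtains by citing \cite[Prop.\:3.25\,(1)]{Ito24a} together with \cite[Thm.\:3.19]{Ito24b} rather than re-deriving it as you sketch. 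The substance of your argument is the same, and the ``key obstacle'' you identify is precisely the step the paper settles with those citations.
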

\begin{proof}
This proposition follows from Lemma \ref{lem2.7} below.
\end{proof}

\begin{lemma}\label{lem2.7}
We have
$${}^p\bfE^{\geq0}_{\CC\mbox{\scriptsize -}c}(\CC_X^\sub) =
\{K\in\BEC_{\CC\mbox{\scriptsize -}c}(\CC_X^\sub)\ |\
\rmD_X^{\rmE, \sub}(K)\in{}^p\bfE^{\leq0}_{\CC\mbox{\scriptsize -}c}(\CC_X^\sub)\}.$$
\end{lemma}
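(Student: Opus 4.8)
The plan is to reduce the statement to the corresponding characterization of ${}^p\bfE^{\geq0}_{\CC\mbox{\scriptsize -}c}(\I\CC_X)$ via the dual-description already stated in the introduction, namely
$${}^p\bfE^{\geq0}_{\CC\mbox{\scriptsize -}c}(\I\CC_X) =
\{L\in\BEC_{\CC\mbox{\scriptsize -}c}(\I\CC_X)\ |\
\rmD_X^{\rmE}(L)\in{}^p\bfE^{\leq0}_{\CC\mbox{\scriptsize -}c}(\I\CC_X)\},$$
together with Proposition \ref{prop2.2}, which translates membership in the subanalytic $t$-structure pieces into membership in the ind-sheaf $t$-structure pieces under $I_X^{\rmE}$. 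So the first step is to fix $K\in\BEC_{\CC\mbox{\scriptsize -}c}(\CC_X^\sub)$ and use Proposition \ref{prop2.2}(2) to rewrite $K\in{}^p\bfE^{\geq0}_{\CC\mbox{\scriptsize -}c}(\CC_X^\sub)$ as $I_X^{\rmE}(K)\in{}^p\bfE^{\geq0}_{\CC\mbox{\scriptsize -}c}(\I\CC_X)$, and then to rewrite the right-hand condition $\rmD_X^{\rmE,\sub}(K)\in{}^p\bfE^{\leq0}_{\CC\mbox{\scriptsize -}c}(\CC_X^\sub)$ via Proposition \ref{prop2.2}(1) as $I_X^{\rmE}(\rmD_X^{\rmE,\sub}(K))\in{}^p\bfE^{\leq0}_{\CC\mbox{\scriptsize -}c}(\I\CC_X)$.

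The key remaining ingredient is then a compatibility of the duality functors with the embedding $I_X^{\rmE}$, i.e.\ a natural isomorphism $I_X^{\rmE}\circ\rmD_X^{\rmE,\sub}\simeq \rmD_X^{\rmE}\circ I_X^{\rmE}$ (or at least on the $\CC$-constructible subcategory, which is all we need). Granting this, the condition $I_X^{\rmE}(\rmD_X^{\rmE,\sub}(K))\in{}^p\bfE^{\leq0}_{\CC\mbox{\scriptsize -}c}(\I\CC_X)$ becomes $\rmD_X^{\rmE}(I_X^{\rmE}(K))\in{}^p\bfE^{\leq0}_{\CC\mbox{\scriptsize -}c}(\I\CC_X)$, which by the cited dual description of ${}^p\bfE^{\geq0}_{\CC\mbox{\scriptsize -}c}(\I\CC_X)$ is equivalent to $I_X^{\rmE}(K)\in{}^p\bfE^{\geq0}_{\CC\mbox{\scriptsize -}c}(\I\CC_X)$, and we have come full circle. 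Chaining these equivalences gives exactly the asserted equality of subcategories.

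The main obstacle, and really the only nontrivial point, is establishing (or locating in the references) the commutation $I_X^{\rmE}\circ\rmD_X^{\rmE,\sub}\simeq \rmD_X^{\rmE}\circ I_X^{\rmE}$ on $\BEC_{\CC\mbox{\scriptsize -}c}(\CC_X^\sub)$. I expect this to follow from the construction of the enhanced subanalytic duality functor in \cite[the paragraph before Lem.\:3.24]{Ito24a} together with the various adjunction and compatibility results for $I_X$, $I_X^{\rmE}$, $\bfR J_X$, $J_X^{\rmE}$ recalled in the proof of Lemma \ref{lem2.3} — in particular the interplay of $I_X^{\rmE}$ with $\rhom$-type internal operations and with the dualizing complex; one should check that $\rmD_X^{\rmE,\sub}$ was \emph{defined} precisely so as to be compatible with $\rmD_X^{\rmE}$ under $I_X^{\rmE}$, much as $\rmD_X^{\rmE}$ is characterized in \cite[Def.\:4.8.1]{DK16}. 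Once that compatibility is in hand the rest is a formal two-line chase. An alternative route, avoiding the general duality-commutation statement, is to express $\rho_X^{-1}(\sh_X^\sub(\rmD_X^{\rmE,\sub}(K)))$ directly: by Lemma \ref{lem2.3} it equals $\sh_X(I_X^{\rmE}(\rmD_X^{\rmE,\sub}(K)))$, and one then only needs the sheafification of enhanced duality to match Verdier duality on $\BDC_{\CC\mbox{\scriptsize -}c}(\CC_X)$ up to the usual shift (as in \cite[Def.\:3.4]{DK21}); combined with the self-duality of the perverse $t$-structure on $\BDC_{\CC\mbox{\scriptsize -}c}(\CC_X)$ this yields the claim. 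Either way, the statement is a combination of Proposition \ref{prop2.2}, Lemma \ref{lem2.3}, the known dual description of the ind-sheaf $t$-structure from \cite[Def.\:4.2]{Ito20}, and a duality-compatibility fact, with no new constructions required.
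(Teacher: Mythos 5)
Your proposal is correct and follows essentially the same route as the paper's proof: translate both membership conditions through $I_X^{\rmE}$ via Proposition \ref{prop2.2}, invoke the dual description of ${}^p\bfE^{\geq0}_{\CC\mbox{\scriptsize -}c}(\I\CC_X)$ from \cite[Lem.\:4.1]{Ito20}, and use the compatibility of the duality functors with $I_X^{\rmE}$ on $\CC$-constructible objects. The compatibility you single out as the only nontrivial point is supplied in the paper exactly along the lines you anticipated: \cite[Prop.\:3.25\:(1)]{Ito24a} gives $J_X^{\rmE}\bigl(\rmD_X^{\rmE}(I_X^{\rmE}(K))\bigr)\simeq\rmD_X^{\rmE,\sub}(K)$, and applying $I_X^{\rmE}$ together with the $\CC$-constructibility of $\rmD_X^{\rmE}(I_X^{\rmE}(K))$ and the equivalence from \cite{Ito24b} yields $I_X^{\rmE}\bigl(\rmD_X^{\rmE,\sub}(K)\bigr)\simeq\rmD_X^{\rmE}\bigl(I_X^{\rmE}(K)\bigr)$, after which the argument closes as you describe.
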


\begin{proof}
Let $K\in \BEC_{\CC\mbox{\scriptsize -}c}(\CC_X^\sub)$.
Then we have
$$K\in {}^p\bfE^{\geq0}_{\CC\mbox{\scriptsize -}c}(\CC_X^\sub)\
\Longleftrightarrow
I_X^{\rmE}(K)\in {}^p\bfE^{\geq0}_{\CC\mbox{\scriptsize -}c}(\I\CC_X)$$
by Proposition \ref{prop2.2} (2)
and we have 
$$I_X^{\rmE}(K)\in {}^p\bfE^{\geq0}_{\CC\mbox{\scriptsize -}c}(\I\CC_X)
\Longleftrightarrow
\rmD_X^\rmE\(I_X^{\rmE}(K)\)\in {}^p\bfE^{\leq0}_{\CC\mbox{\scriptsize -}c}(\I\CC_X)$$
by \cite[Lem.\:4.1]{Ito20}.
Here, $\rmD_X^{\rmE}\colon \BEC(\I\CC_X)^\op\to \BEC(\I\CC_X)$ is the duality functor for enhanced ind-sheaves.
See \cite[Def.\:4.8.1]{DK16} for the definition.
Since there exists an isomorphism in $\BDC(\CC_X^\sub)$
$$J_X^\rmE\(\rmD_X^\rmE\(I_X^{\rmE}(K)\)\)\simeq \rmD_X^{\rmE,\sub}(K)$$
by \cite[Prop.\:3.25 (1)]{Ito24a},
we have isomorphisms in $\BEC(\I\CC_X)$
$$I_X^\rmE\(\rmD_X^{\rmE,\sub}(K)\)
\simeq
I_X^\rmE\(J_X^\rmE\(\rmD_X^\rmE\(I_X^{\rmE}(K)\)\)\)
\simeq
\rmD_X^\rmE\(I_X^{\rmE}(K)\),$$
where in the second isomorphism
we used the fact $\rmD_X^\rmE\(I_X^{\rmE}(K)\)\in \BEC_{\CC\mbox{\scriptsize -}c}(\I\CC_X)$
and \cite[Thm.\:3.19]{Ito24b}.
Therefore we have
$$\rmD_X^\rmE\(I_X^{\rmE}(K)\)\in {}^p\bfE^{\leq0}_{\CC\mbox{\scriptsize -}c}(\I\CC_X)
\Longleftrightarrow
I_X^\rmE\(\rmD_X^{\rmE,\sub}(K)\) \in {}^p\bfE^{\leq0}_{\CC\mbox{\scriptsize -}c}(\I\CC_X)$$
and hence
$$I_X^\rmE\(\rmD_X^{\rmE,\sub}(K)\) \in {}^p\bfE^{\leq0}_{\CC\mbox{\scriptsize -}c}(\I\CC_X)
\Longleftrightarrow
\rmD_X^{\rmE,\sub}(K) \in {}^p\bfE^{\leq0}_{\CC\mbox{\scriptsize -}c}(\CC_X^\sub)$$
by Proposition \ref{prop2.2} (1).

The proof is completed.
\end{proof}

Recall that there exists the natural embedding functor:
$$e_X^\sub\circ \rho_{X\ast}\colon \BDC_{\CC\mbox{\scriptsize -}c}(\CC_X)
\hookrightarrow \BEC_{\CC\mbox{\scriptsize -}c}(\CC_X^\sub).$$
See \cite[Prop.\:4.21]{Ito24b} for the details.

\begin{proposition}
We have 
\begin{itemize}
\item[\rm (1)]
for any $\SF\in{}^p\bfD_{\CC\mbox{\scriptsize -}c}^{\leq0}(\CC_X)$,
we have $e_X^\sub\(\rho_{X\ast}(\SF)\)\in{}^p\bfE_{\CC\mbox{\scriptsize -}c}^{\leq0}(\CC_X^\sub)$,

\item[\rm (2)]
for any $\SF\in{}^p\bfD_{\CC\mbox{\scriptsize -}c}^{\geq0}(\CC_X)$,
we have $e_X^\sub\(\rho_{X\ast}(\SF)\)\in{}^p\bfE_{\CC\mbox{\scriptsize -}c}^{\geq0}(\CC_X^\sub)$.
\end{itemize}
Moreover, the functor $e_X^\sub\circ \rho_{X\ast}$ induces a fully faithful embedding:
$$e_X^\sub\circ\rho_{X\ast}\colon \Perv(\CC_X)\hookrightarrow \Perv(\CC_X^\sub)$$
and the following diagram is commutative:
\[\xymatrix@R=25pt@C=60pt@M=7pt{
\Modhol(\D_X)^{\op}\ar@{->}[r]^\sim\ar@<1.0ex>@{}[r]^-{\Sol_X^{\rmE, \sub}(\cdot)[d_X]}
 & \Perv(\CC_X^\sub)\\
\Modrh(\D_X)^{\op}\ar@{->}[r]_-{\Sol_X(\cdot)[d_X]}^-{\sim}\ar@{}[u]|-{\bigcup}
&\Perv(\CC_X).
\ar@{^{(}->}[u]_-{e_X\circ \rho_{X \ast}}
}\]
\end{proposition}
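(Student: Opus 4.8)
The plan is to reduce both halves of the statement to results already recorded in the excerpt, the only genuinely new ingredient being a ``sheafification recovers the input'' identity for the functor $e_X^\sub\circ\rho_{X\ast}$.

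\emph{Parts {\rm (1)} and {\rm (2)}.} First I would prove that for every $\SF\in\BDC_{\CC\mbox{\scriptsize -}c}(\CC_X)$ there is an isomorphism $\rho_X^{-1}\(\sh_X^\sub\(e_X^\sub\(\rho_{X\ast}(\SF)\)\)\)\simeq\SF$ in $\BDC(\CC_X)$. By Lemma \ref{lem2.3} the left-hand side is isomorphic to $\sh_X\(I_X^\rmE\(e_X^\sub\(\rho_{X\ast}(\SF)\)\)\)$, so it suffices to check (i) $I_X^\rmE\(e_X^\sub\(\rho_{X\ast}(\SF)\)\)\simeq e_X\(\iota_X(\SF)\)$ in $\BEC(\I\CC_X)$ and (ii) $\sh_X\(e_X\(\iota_X(\SF)\)\)\simeq\SF$. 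For (ii) I would invoke the standard property of the sheafification functor from \cite{DK21}. For (i), the compatibility of the three natural embeddings, I would either quote it from \cite{Ito24a, Ito24b}, or deduce it as follows: since $\Sol_X$ is an equivalence one may write $\SF\simeq\Sol_X(\M)$ with $\M\in\BDCrh(\D_X)$, and then combining the commutative diagrams of Theorems \ref{irregRH_ana}, \ref{thm1.8} and \ref{thm1.9} gives $I_X^\rmE(e_X^\sub(\rho_{X\ast}(\Sol_X\M)))\simeq I_X^\rmE(\Sol_X^{\rmE,\sub}\M)\simeq\Sol_X^\rmE\M\simeq e_X(\iota_X(\Sol_X\M))$. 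Granting the isomorphism $\rho_X^{-1}(\sh_X^\sub(e_X^\sub(\rho_{X\ast}\SF)))\simeq\SF$, parts (1) and (2) are immediate from Definition \ref{main-def-1}: the membership of $e_X^\sub(\rho_{X\ast}(\SF))$ in ${}^p\bfE^{\leq0}_{\CC\mbox{\scriptsize -}c}(\CC_X^\sub)$ (resp.\ ${}^p\bfE^{\geq0}_{\CC\mbox{\scriptsize -}c}(\CC_X^\sub)$) is by definition the membership of $\SF$ in ${}^p\bfD^{\leq0}_{\CC\mbox{\scriptsize -}c}(\CC_X)$ (resp.\ ${}^p\bfD^{\geq0}_{\CC\mbox{\scriptsize -}c}(\CC_X)$), which is the hypothesis. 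As an alternative I could route (1) and (2) through Proposition \ref{prop2.2} together with the $\leq0$/$\geq0$ parts of the $t$-exactness of $e_X\circ\iota_X$ implicit in \cite[Thm.\:4.5]{Ito20}.

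\emph{The fully faithful embedding and the square.} Since $e_X^\sub(\rho_{X\ast}(\SF))$ is $\CC$-constructible whenever $\SF$ is, by \cite[Prop.\:4.21]{Ito24b}, for $\SF\in\Perv(\CC_X)={}^p\bfD^{\leq0}_{\CC\mbox{\scriptsize -}c}(\CC_X)\cap{}^p\bfD^{\geq0}_{\CC\mbox{\scriptsize -}c}(\CC_X)$ parts (1) and (2) place $e_X^\sub(\rho_{X\ast}(\SF))$ in ${}^p\bfE^{\leq0}_{\CC\mbox{\scriptsize -}c}(\CC_X^\sub)\cap{}^p\bfE^{\geq0}_{\CC\mbox{\scriptsize -}c}(\CC_X^\sub)=\Perv(\CC_X^\sub)$; thus $e_X^\sub\circ\rho_{X\ast}$ restricts to a functor $\Perv(\CC_X)\to\Perv(\CC_X^\sub)$, which is fully faithful as the restriction to a full subcategory of the fully faithful $e_X^\sub\circ\rho_{X\ast}\colon\BDC_{\CC\mbox{\scriptsize -}c}(\CC_X)\hookrightarrow\BEC_{\CC\mbox{\scriptsize -}c}(\CC_X^\sub)$. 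The commutative square is obtained by restricting the square of Theorem \ref{thm1.9} to $\Modrh(\D_X)\subset\BDChol(\D_X)$ and passing to the hearts: for $\M\in\Modrh(\D_X)$ one has $\Sol_X^{\rmE,\sub}(\M)\simeq e_X^\sub(\rho_{X\ast}(\Sol_X(\M)))$, and applying $[d_X]$, which commutes with $e_X^\sub$ and $\rho_{X\ast}$, gives $\Sol_X^{\rmE,\sub}(\M)[d_X]\simeq e_X^\sub(\rho_{X\ast}(\Sol_X(\M)[d_X]))$, which is exactly the asserted commutativity.

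The only step that is not pure bookkeeping is the recovery isomorphism $\rho_X^{-1}\circ\sh_X^\sub\circ e_X^\sub\circ\rho_{X\ast}\simeq\id$ on $\BDC_{\CC\mbox{\scriptsize -}c}(\CC_X)$ (equivalently, its two ingredients (i) and (ii) above); I expect this to be explicit or implicit in \cite{Ito24a, Ito24b}, and if not, the decomposition via (i) and (ii) settles it cleanly. A minor point to watch is the shift by $d_X$ and the $t$-structure sign conventions when matching the standard $t$-structure on $\D$-modules with the perverse one, but this causes no real difficulty.
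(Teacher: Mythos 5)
Your proposal is correct, but it routes parts (1) and (2) somewhat differently from the paper. The paper disposes of (1) and (2) by transferring everything through $I_X^\rmE$: it combines Proposition \ref{prop2.2} with the compatibility $I_X^\rmE\circ e_X^\sub\circ\rho_{X\ast}\simeq e_X\circ\iota_X$ (cited from \cite[Props.\:3.4 (4)(i), 3.21]{Ito24a}) and with the already-known perverse exactness of $e_X\circ\iota_X$ for enhanced ind-sheaves (\cite[Prop.\:4.8 (1),(2)]{Ito20}); your "alternative" remark is essentially this argument. Your primary route instead verifies Definition \ref{main-def-1} directly, by establishing the recovery isomorphism $\rho_X^{-1}\(\sh_X^\sub\(e_X^\sub\(\rho_{X\ast}(\SF)\)\)\)\simeq\SF$ through Lemma \ref{lem2.3}, the identity $\sh_X\circ e_X\circ\iota_X\simeq\id$ from \cite{DK21}, and the embedding compatibility, which you derive for $\CC$-constructible $\SF$ by writing $\SF\simeq\Sol_X(\M)$ and chaining the commutative squares of Theorems \ref{irregRH_DK}, \ref{thm1.8} and \ref{thm1.9}; this is legitimate (and self-contained within the quoted theorems), though it only yields the compatibility on $\BDC_{\CC\mbox{\scriptsize -}c}(\CC_X)$, whereas the cited facts from \cite{Ito24a} hold without constructibility — sufficient here. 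For the final assertions both you and the paper use \cite[Prop.\:4.21]{Ito24b} for $\CC$-constructibility and full faithfulness and the identification $\Sol_X^{\rmE,\sub}\simeq e_X^\sub\circ\rho_{X\ast}\circ\Sol_X$ on regular holonomic modules (the paper cites \cite[Cor.\:4.23]{Ito24b}, you extract it from Theorem \ref{thm1.9}), so that part is essentially the paper's argument. One small bookkeeping point: membership in ${}^p\bfE^{\leq0}_{\CC\mbox{\scriptsize -}c}(\CC_X^\sub)$ already presupposes $e_X^\sub(\rho_{X\ast}(\SF))\in\BEC_{\CC\mbox{\scriptsize -}c}(\CC_X^\sub)$, so the appeal to \cite[Prop.\:4.21]{Ito24b} is needed in parts (1) and (2) as well, not only for the heart statement; you do cite it, just place it up front.
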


\begin{proof}
The first and second assertions follow from
\cite[Prop.\:4.8 (1), (2)]{Ito20}, \cite[Props.\:3.4 (4)(i), 3.21]{Ito24a} and Proposition \ref{prop2.2}.
The last assertion follows from the first and second assertions and \cite[Cor.\:4.23]{Ito24b}.
\end{proof}

\begin{proposition}\label{prop2.9}
Let $f : X\to Y$ be a proper morphism of complex manifolds.
We assume that there exists a non-negative integer $d\in \ZZ_{\geq0}$
such that $\dim f^{-1}(y)\leq d$ for any $y\in Y$.
Here, $\dim f^{-1}(y)$ is the dimension of $f^{-1}(y)$ as an analytic space. 
Then we have
\begin{itemize}
\item[\rm(1)]
for any $K\in{}^p\bfE_{\CC\mbox{\scriptsize -}c}^{\leq0}(\CC_X^\sub)$,
we have $\bfE f_{!!} K\in{}^p\bfE_{\CC\mbox{\scriptsize -}c}^{\leq d}(\CC_Y^\sub)$,
\item[\rm(2)]
for any $K\in{}^p\bfE_{\CC\mbox{\scriptsize -}c}^{\geq0}(\CC_X^\sub)$,
we have $\bfE f_{!!} K\in{}^p\bfE_{\CC\mbox{\scriptsize -}c}^{\geq -d}(\CC_Y^\sub)$.
\end{itemize}
\end{proposition}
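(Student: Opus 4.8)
The plan is to transport the statement, through the sheafification-type functor $\rho_X^{-1}\circ\sh_X^\sub$, to the classical estimate for proper direct images of $\CC$-constructible sheaves with respect to the perverse t-structure. Recall that, by Definition \ref{main-def-1} together with its shifts, the membership $\bfE f_{!!}K\in{}^p\bfE^{\leq d}_{\CC\mbox{\scriptsize -}c}(\CC_Y^\sub)$ means precisely $\rho_Y^{-1}\bigl(\sh_Y^\sub(\bfE f_{!!}K)\bigr)\in{}^p\bfD^{\leq d}_{\CC\mbox{\scriptsize -}c}(\CC_Y)$, and similarly with $\geq -d$. The classical input I would use is: for a proper $f$ with $\dim f^{-1}(y)\leq d$ for all $y$, the functor $\bfR f_\ast=\bfR f_!$ sends ${}^p\bfD^{\leq0}_{\CC\mbox{\scriptsize -}c}(\CC_X)$ into ${}^p\bfD^{\leq d}_{\CC\mbox{\scriptsize -}c}(\CC_Y)$ and ${}^p\bfD^{\geq0}_{\CC\mbox{\scriptsize -}c}(\CC_X)$ into ${}^p\bfD^{\geq-d}_{\CC\mbox{\scriptsize -}c}(\CC_Y)$; the first follows from the support bound on the cohomology sheaves of $\bfR f_\ast$ combined with the vanishing of $\bfR^k f_\ast$ for $k>2d$, and the second from the first by Verdier duality, using $\bfR f_!=\bfR f_\ast$ for proper $f$.

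The key point to establish is then the compatibility \[ \rho_Y^{-1}\bigl(\sh_Y^\sub(\bfE f_{!!}K)\bigr)\ \simeq\ \bfR f_!\bigl(\rho_X^{-1}(\sh_X^\sub(K))\bigr) \] in $\BDC_{\CC\mbox{\scriptsize -}c}(\CC_Y)$, which holds because $f$ is proper. I would deduce it from Lemma \ref{lem2.3} — which identifies the left-hand side with $\sh_Y\bigl(I_Y^\rmE(\bfE f_{!!}K)\bigr)$ — together with two commutations: first, $I^\rmE$ commutes with proper enhanced direct image, so $I_Y^\rmE(\bfE f_{!!}K)\simeq\bfE f_{!!}(I_X^\rmE K)$ (here $\bfE f_{!!}\simeq\bfE f_\ast$ since $f$ is proper; see \cite{Ito24a}); second, the enhanced sheafification functor commutes with proper direct image, $\sh_Y(\bfE f_{!!}L)\simeq\bfR f_!(\sh_X L)$, which unwinds — via $\sh_X=\alpha_X\circ\Ish_X$ and the identifications used in the proof of Lemma \ref{lem2.3} — into the compatibility with proper direct image of each of $\alpha_X$, $\Ish_X$, $\bfR J_X$, $\rho_X^{-1}$. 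Equivalently, one can argue entirely on the subanalytic side, using that $\sh_X^\sub$ commutes with proper enhanced direct image and $\rho_X^{-1}$ with $\bfR f_!$ for proper $f$. One also needs $\bfE f_{!!}K\in\BEC_{\CC\mbox{\scriptsize -}c}(\CC_Y^\sub)$ for the statement even to make sense; through the equivalence $I_Y^\rmE$ of Theorem \ref{thm1.9} this follows from the stability of $\CC$-constructible enhanced ind-sheaves under the operations (cf. \cite{Ito20}), equivalently from preservation of holonomicity under proper $\D$-module direct image.

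Granting these facts, the proof concludes formally: for $K\in{}^p\bfE^{\leq0}_{\CC\mbox{\scriptsize -}c}(\CC_X^\sub)$ one has $\rho_X^{-1}(\sh_X^\sub(K))\in{}^p\bfD^{\leq0}_{\CC\mbox{\scriptsize -}c}(\CC_X)$, hence by the displayed isomorphism and the classical estimate $\rho_Y^{-1}(\sh_Y^\sub(\bfE f_{!!}K))\in{}^p\bfD^{\leq d}_{\CC\mbox{\scriptsize -}c}(\CC_Y)$, that is $\bfE f_{!!}K\in{}^p\bfE^{\leq d}_{\CC\mbox{\scriptsize -}c}(\CC_Y^\sub)$; the assertion for ${}^p\bfE^{\geq0}$ is obtained in the same way with $-d$ in place of $d$. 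I expect the only genuine obstacle to be the commutation of the sheafification functor with proper direct image in the enhanced setting — neither $\sh$ nor $\sh^\sub$ is visibly proper-compatible straight from its definition — but this should be reachable from the known behaviour of $\alpha$, $\Ish$, $\bfR J$ and $\rho^{-1}$ under $\bfR f_\ast$ for proper $f$, together with $\bfE f_{!!}\simeq\bfE f_\ast$ in the proper case; the rest is bookkeeping.
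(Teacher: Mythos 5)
Your argument is correct in substance, but it follows a different route from the paper. The paper's proof is a three-line reduction: by Proposition \ref{prop2.2} (i.e.\ Lemma \ref{lem2.3}) membership in ${}^p\bfE^{\lessgtr}_{\CC\mbox{\scriptsize -}c}(\CC^\sub)$ can be tested after applying $I^\rmE$, by \cite[Prop.\:3.16\,(2)(ii)]{Ito24a} the functor $I^\rmE$ commutes with $\bfE f_{!!}$, and then one simply quotes the already-established enhanced ind-sheaf statement \cite[Prop.\:4.9]{Ito20}; the fibre-dimension estimate is never re-proved. You share the first two ingredients but, instead of citing \cite[Prop.\:4.9]{Ito20}, you descend one level further via the sheafification functor to the classical estimate for $\bfR f_\ast=\bfR f_!$ on ${}^p\bfD^{\lessgtr0}_{\CC\mbox{\scriptsize -}c}(\CC_X)$, so in effect you re-prove the content of \cite[Prop.\:4.9]{Ito20} along the way. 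This is legitimate: the compatibility you flag as the ``only genuine obstacle'', $\sh_Y(\bfE f_{!!}L)\simeq\bfR f_!(\sh_X L)$ for proper $f$, is not a gap but a known fact (use $\bfE f_{!!}\simeq\bfE f_\ast$ and $\bfR f_!\simeq\bfR f_\ast$ in the proper case together with the compatibility of $\sh$ with $\bfE f_\ast$ from \cite{DK21}, or argue on the subanalytic side via \cite{Ito24a}), and your handling of the constructibility of $\bfE f_{!!}K$ through the equivalence $I^\rmE$ is the same bookkeeping the paper needs implicitly. Your approach is more self-contained and shows where the bound $d$ actually enters, at the price of redoing work the paper outsources. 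One small caution: your parenthetical justification of the classical estimate (support bounds plus vanishing of $\bfR^kf_\ast$ for $k>2d$) only yields ${}^p\bfD^{\leq 2d}$; the sharp bound ${}^p\bfD^{\leq d}$ requires the standard semicontinuity count on the loci where the fibres of $f$ restricted to $\supp H^q$ have given dimension (or a citation to \cite{BBD} or \cite[\S 8.2]{HTT}), after which the $\geq$ half does follow by Verdier duality as you say.
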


\begin{proof}
The assertion (1) follows from Proposition \ref{prop2.2} (1),
\cite[Prop.\:4.9 (1)]{Ito20} and \cite[Prop.\:3.16 (2)(ii)]{Ito24a}.
The assertion (2) follows from Proposition \ref{prop2.2} (2),
\cite[Prop.\:4.9 (2)]{Ito20} and \cite[Prop.\:3.16 (2)(ii)]{Ito24a}.
\end{proof}

\begin{proposition}
Let $f : X\to Y$ be a morphism of complex manifolds.
We assume that there exists a non-negative integer $d\in \ZZ_{\geq0}$
such that $\dim f^{-1}(y)\leq d$ for any $y\in Y$.
Then we have
\begin{itemize}
\item[\rm(1)]
for any $L\in{}^p\bfE_{\CC\mbox{\scriptsize -}c}^{\leq0}(\CC_Y^\sub)$
we have $\bfE f^{-1} L\in{}^p\bfE_{\CC\mbox{\scriptsize -}c}^{\leq d}(\CC_X^\sub)$,
\item[\rm(2)]
for any $L\in{}^p\bfE_{\CC\mbox{\scriptsize -}c}^{\geq0}(\CC_Y^\sub)$
we have $\bfE f^! L\in{}^p\bfE_{\CC\mbox{\scriptsize -}c}^{\geq -d}(\CC_X^\sub)$.
\end{itemize}
\end{proposition}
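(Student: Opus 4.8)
The statement is the exact analogue for enhanced subanalytic sheaves of the $t$-exactness-up-to-shift properties of $\bfE f^{-1}$ and $\bfE f^!$ with respect to the perverse $t$-structure. By the general philosophy used throughout this section — every statement about $\BEC_{\CC\text{-}c}(\CC_X^\sub)$ is transported from the corresponding statement about $\BEC_{\CC\text{-}c}(\I\CC_X)$ via the equivalence $I_X^\rmE$ and Proposition \ref{prop2.2} — the proof should be essentially formal once the right compatibility of $I_X^\rmE$ with the operations $\bfE f^{-1}$ and $\bfE f^!$ is in place. Concretely, for (1) I would start from $L\in {}^p\bfE^{\leq 0}_{\CC\text{-}c}(\CC_Y^\sub)$, apply Proposition \ref{prop2.2} (1) to get $I_Y^\rmE(L)\in {}^p\bfE^{\leq 0}_{\CC\text{-}c}(\I\CC_Y)$, invoke the ind-sheaf analogue (the corresponding statement for $\bfE f^{-1}$ in \cite{Ito20}, companion to \cite[Prop.\:4.9]{Ito20} that was already cited in Propositions \ref{prop2.9} and the one just before) to conclude $\bfE f^{-1} I_Y^\rmE(L)\in {}^p\bfE^{\leq d}_{\CC\text{-}c}(\I\CC_X)$, then use the commutation $I_X^\rmE\circ \bfE f^{-1}\simeq \bfE f^{-1}\circ I_Y^\rmE$ coming from \cite[Prop.\:3.16]{Ito24a} to rewrite this as $I_X^\rmE(\bfE f^{-1} L)\in {}^p\bfE^{\leq d}_{\CC\text{-}c}(\I\CC_X)$, and finally apply Proposition \ref{prop2.2} (1) in the reverse direction (together with the fact that $\bfE f^{-1}$ preserves $\CC$-constructibility, \cite[Thm.\:4.19]{Ito24b} or its functoriality statement) to land in ${}^p\bfE^{\leq d}_{\CC\text{-}c}(\CC_X^\sub)$.

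For (2) the structure is identical but uses $\bfE f^!$ in place of $\bfE f^{-1}$, Proposition \ref{prop2.2} (2) in place of (1), and the commutation $I_X^\rmE\circ \bfE f^!\simeq \bfE f^!\circ I_Y^\rmE$. Alternatively, one can deduce (2) from (1) by duality: writing $\bfE f^! L\simeq \rmD_X^{\rmE,\sub}\,\bfE f^{-1}\,\rmD_Y^{\rmE,\sub}(L)$ (the standard adjunction/duality interchange for the six operations on enhanced subanalytic sheaves), and then combining (1) with Lemma \ref{lem2.7}, which identifies ${}^p\bfE^{\geq 0}_{\CC\text{-}c}(\CC_X^\sub)$ as the $\rmD_X^{\rmE,\sub}$-image of ${}^p\bfE^{\leq 0}_{\CC\text{-}c}(\CC_X^\sub)$, and similarly for the shifted versions ${}^p\bfE^{\leq d}$ and ${}^p\bfE^{\geq -d}$ — the shift $d$ is preserved because duality is $t$-exact and reverses the sign of shifts. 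I would probably present the direct argument for (1) and the duality argument for (2), to keep the exposition parallel to Lemma \ref{lem2.7}.

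**The main obstacle.** The genuinely nonformal input is the behaviour of the ordinary (non-proper) pullbacks on the \emph{ind-sheaf} side with respect to the perverse $t$-structure of \cite{Ito20}: one needs that $\bfE f^{-1}$ sends ${}^p\bfE^{\leq 0}_{\CC\text{-}c}(\I\CC_Y)$ into ${}^p\bfE^{\leq d}_{\CC\text{-}c}(\I\CC_X)$ and $\bfE f^!$ sends ${}^p\bfE^{\geq 0}$ into ${}^p\bfE^{\geq -d}$. If this is not literally stated in \cite{Ito20}, it has to be reduced — via the sheafification functor $\sh_X$ and \cite[Prop.\:3.16]{Ito24a}, or directly via $\rho_X^{-1}\circ \sh_X^\sub$ and Lemma \ref{lem2.3} — to the corresponding classical fact for $\CC$-constructible sheaves, namely that $f^{-1}$ (resp.\ $f^!$) shifts the perverse $t$-structure by at most $d$ when the fibre dimension is bounded by $d$ \cite[Cor.\:10.3.8 and around]{KS90}. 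Bounding the fibre dimension is exactly where the hypothesis ``$\dim f^{-1}(y)\le d$'' enters. The other point to be careful about is that $\bfE f^{-1}$ and $\bfE f^!$ genuinely preserve $\CC$-constructibility for a morphism of complex manifolds (not merely for proper ones), so that the target categories in the statement make sense; this should follow from the functoriality part of \cite[Thm.\:4.19]{Ito24b} combined with \cite[Prop.\:3.16]{Ito24a}, but it is worth a sentence. Everything else — chasing the two commutative squares and invoking Proposition \ref{prop2.2} — is bookkeeping.
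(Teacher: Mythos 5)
Your proposal is correct and is essentially the paper's own argument: transport $L$ through $I^\rmE$ using Proposition \ref{prop2.2}, apply the ind-sheaf analogue (which is exactly \cite[Prop.\:4.10]{Ito20}, the statement you anticipated as the ``companion'' to \cite[Prop.\:4.9]{Ito20}), and commute $I^\rmE$ with $\bfE f^{-1}$ resp.\ $\bfE f^!$ via \cite[Prop.\:3.16 (2)(i), (iii)]{Ito24a} before applying Proposition \ref{prop2.2} again. Your alternative duality route for (2) is a harmless variant; the paper simply runs the direct argument for both parts.
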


\begin{proof}
The assertion (1) follows from Proposition \ref{prop2.2} (1),
\cite[Prop.\:4.10 (1)]{Ito20} and \cite[Prop.\:3.16 (2)(i)]{Ito24a}.
The assertion (2) follows from Proposition \ref{prop2.2} (2),
\cite[Prop.\:4.10 (2)]{Ito20} and \cite[Prop.\:3.16 (2)(iii)]{Ito24a}.
\end{proof}

\subsection{Algebraic Case}
The main definition of this section is Definition \ref{main-def-2}
and main results of this section are Theorems \ref{main-thm-3}, \ref{main-thm-4}.

Let $X$ be a smooth algebraic variety over $\CC$
and denote by $\tl{X}$ a smooth completion of $X$.
We shall recall that there exist functors
$$\rho_{X_\infty^\an}^{-1}\colon \BDC(\CC_{X_\infty^\an}^\sub)\to \BDC(\CC_X),\hspace{17pt}
\sh_{X_\infty^\an}^\sub\colon \BEC(\CC_{X_\infty^\an}^\sub)\to \BDC(\CC_{X_\infty^\an}^\sub)$$
and $$\rho_{X_\infty^\an}^{-1}\circ \sh_{X_\infty^\an}^\sub\colon
\BEC(\CC_{X_\infty^\an}^\sub)\to \BDC(\CC_X).$$
See \cite[\S\:3.1]{Ito24a} for the definition of $\rho_{X_\infty^\an}^{-1}$,
\cite[The paragraph before Prop.\:3.23]{Ito24a} for the definition of $\sh_{X_\infty^\an}^\sub$.

\begin{definition}\label{main-def-2}
We define full subcategories of $\BEC_{\CC\mbox{\scriptsize -}c}(\CC_{X_\infty}^\sub)$ by
\begin{align*}
{}^p\bfE^{\leq0}_{\CC\mbox{\scriptsize -}c}(\CC_{X_\infty}^\sub) &:=
\{K\in\BEC_{\CC\mbox{\scriptsize -}c}(\CC_{X_\infty}^\sub)\ |\
\rho_{X_\infty^\an}^{-1}\(\sh_{X^\an_\infty}(K)\)\in{}^p\bfD^{\leq0}_{\CC\mbox{\scriptsize -}c}(\CC_X)\},\\
{}^p\bfE^{\geq0}_{\CC\mbox{\scriptsize -}c}(\CC_{X_\infty}^\sub) &:=
\{K\in\BEC_{\CC\mbox{\scriptsize -}c}(\CC_{X_\infty}^\sub)\ |\ 
\rho_{X_\infty^\an}^{-1}\(\sh_{X^\an_\infty}(K)\)\in{}^p\bfD^{\geq0}_{\CC\mbox{\scriptsize -}c}(\CC_X)\}.
\end{align*}
Here, the pair $\big({}^p\bfD^{\leq0}_{\CC\mbox{\scriptsize -}c}(\CC_X),
{}^p\bfD^{\geq0}_{\CC\mbox{\scriptsize -}c}(\CC_X)\big)$
is the perverse t-structure on the triangulated category $\BDC_{\CC\mbox{\scriptsize -}c}(\CC_{X^\an})$
of algebraic $\CC$-constructible sheaves on $X^\an$.
See \cite[Def.\:4.5.6, Thm.\:8.1.27]{HTT} for the details.
\end{definition}

Then there exists a relation between the pair
$\big({}^p\bfE^{\leq0}_{\CC\mbox{\scriptsize -}c}(\CC_{X_\infty}^\sub),
{}^p\bfE^{\geq0}_{\CC\mbox{\scriptsize -}c}(\CC_{X_\infty}^\sub)\big)$
and the t-structure  
$\big({}^p\bfE^{\leq0}_{\CC\mbox{\scriptsize -}c}(\I\CC_{X_\infty}),
{}^p\bfE^{\geq0}_{\CC\mbox{\scriptsize -}c}(\I\CC_{X_\infty})\big)$
on $\BEC_{\CC\mbox{\scriptsize -}c}(\I\CC_{X_\infty})$ which is defined in \cite[Def.\:3.18]{Ito21}.
Let us recall that there exists an equivalence of categories:
$$I_{X_\infty^\an}^{\rmE}\colon
\BEC_{\CC\mbox{\scriptsize -}c}(\CC_{X_\infty}^\sub)\hookrightarrow
\BEC_{\CC\mbox{\scriptsize -}c}(\I\CC_{X_\infty}).$$
See \cite[Thm.\:5.10]{Ito24b} and also \cite[Thm.\:3.15]{Ito24a} for the details.

\begin{proposition}\label{prop2.12}
For any $K\in \BEC_{\CC\mbox{\scriptsize -}c}(\CC_{X_\infty}^\sub)$,
we have
\begin{itemize}
\item[{\rm (1)}]
$K\in {}^p\bfE^{\leq0}_{\CC\mbox{\scriptsize -}c}(\CC_{X_\infty}^\sub)\
\Longleftrightarrow I_{X_\infty^\an}^{\rmE}(K)\in {}^p\bfE^{\leq0}_{\CC\mbox{\scriptsize -}c}(\I\CC_{X_\infty})$,

\item[{\rm (2)}]
$K\in {}^p\bfE^{\geq0}_{\CC\mbox{\scriptsize -}c}(\CC_{X_\infty}^\sub)\
\Longleftrightarrow I_{X_\infty^\an}^{\rmE}(K)\in {}^p\bfE^{\geq0}_{\CC\mbox{\scriptsize -}c}(\I\CC_{X_\infty})$.
\end{itemize}
\end{proposition}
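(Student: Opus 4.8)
The plan is to reduce Proposition~\ref{prop2.12} to the already-established analytic statement (Proposition~\ref{prop2.2}) via an algebraic analogue of Lemma~\ref{lem2.3}. Concretely, the strategy mirrors exactly the proof in the analytic case: establish a commutative square relating the subanalytic sheafification $\sh^\sub_{X^\an_\infty}$ and $\rho^{-1}_{X^\an_\infty}$ on the subanalytic side with the ind-sheaf sheafification $\sh_{X^\an_\infty}$ on the ind-sheaf side, intertwined by the embedding $I^\rmE_{X^\an_\infty}$. Once we know that $\rho^{-1}_{X^\an_\infty}\bigl(\sh^\sub_{X^\an_\infty}(K)\bigr)\simeq \sh_{X^\an_\infty}\bigl(I^\rmE_{X^\an_\infty}(K)\bigr)$ in $\BDC(\CC_X)$ for every $K\in\BEC(\CC^\sub_{X^\an_\infty})$, the two equivalences are immediate: by definition $K\in{}^p\bfE^{\leq0}_{\CC\text{-}c}(\CC^\sub_{X_\infty})$ means $\rho^{-1}_{X^\an_\infty}(\sh^\sub_{X^\an_\infty}(K))\in{}^p\bfD^{\leq0}_{\CC\text{-}c}(\CC_X)$, which by the isomorphism is equivalent to $\sh_{X^\an_\infty}(I^\rmE_{X^\an_\infty}(K))\in{}^p\bfD^{\leq0}_{\CC\text{-}c}(\CC_X)$, i.e.\ $I^\rmE_{X^\an_\infty}(K)\in{}^p\bfE^{\leq0}_{\CC\text{-}c}(\I\CC_{X_\infty})$ by the definition of the latter t-structure in \cite[Def.\:3.18]{Ito21}; and likewise for ${}^p\bfE^{\geq0}$.

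The key steps, in order, are: (i) recall the algebraic/bordered-space versions of the functors $I_{X^\an_\infty}\colon\BDC(\CC^\sub_{X^\an_\infty})\hookrightarrow\BDC(\I\CC_{X^\an_\infty})$ with right adjoint $\bfR J_{X^\an_\infty}$, the functor $\Ish_{X^\an_\infty}\colon\BEC(\I\CC_{X^\an_\infty})\to\BDC(\I\CC_{X^\an_\infty})$ with $\sh_{X^\an_\infty}=\alpha_{X^\an_\infty}\circ\Ish_{X^\an_\infty}$, and the identity $\alpha_{X^\an_\infty}=\rho^{-1}_{X^\an_\infty}\circ\bfR J_{X^\an_\infty}$ — all of these are stated for real analytic bordered spaces in \cite{Ito24a}, so they apply verbatim to $X^\an_\infty=(X^\an,\tl X^\an)$; (ii) run the same four-step chain of isomorphisms as in the proof of Lemma~\ref{lem2.3}: $\sh_{X^\an_\infty}(I^\rmE_{X^\an_\infty}(K))\simeq\alpha_{X^\an_\infty}\Ish_{X^\an_\infty}I^\rmE_{X^\an_\infty}(K)\simeq\rho^{-1}_{X^\an_\infty}\bfR J_{X^\an_\infty}\Ish_{X^\an_\infty}I^\rmE_{X^\an_\infty}(K)\simeq\rho^{-1}_{X^\an_\infty}\sh^\sub_{X^\an_\infty}J^\rmE_{X^\an_\infty}I^\rmE_{X^\an_\infty}(K)\simeq\rho^{-1}_{X^\an_\infty}\sh^\sub_{X^\an_\infty}(K)$, invoking the bordered-space analogues of \cite[Prop.\:3.7(3)(ii), Prop.\:3.26(3), Thm.\:3.18(1)]{Ito24a}; (iii) conclude the two equivalences as above. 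Since all of \cite{Ito24a} is developed on the level of real analytic bordered spaces, no new geometric input is needed beyond observing that $X^\an_\infty$ is such a space.

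The only genuine subtlety — and the step I expect to require the most care — is verifying that the cited results of \cite{Ito24a} are indeed formulated (or immediately transport) to the bordered-space setting with the specific bordered space $X^\an_\infty=(X^\an,\tl X^\an)$ arising from an algebraic completion, rather than only for an honest complex manifold. In particular one must check that $\sh^\sub_{X^\an_\infty}$, $J^\rmE_{X^\an_\infty}$, $I^\rmE_{X^\an_\infty}$ and the adjunctions $I_{X^\an_\infty}\dashv\bfR J_{X^\an_\infty}$ behave exactly as in the manifold case; this is true because \cite{Ito24a} sets everything up for real analytic bordered spaces from the start, and $X^\an_\infty$ qualifies since $\tl X^\an$ is a compact complex (hence real analytic) manifold containing $X^\an$ as an open subanalytic subset with subanalytic complement. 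Granting this, the proof is a formal repetition of Lemma~\ref{lem2.3} and I would simply write ``This follows from the bordered-space version of Lemma~\ref{lem2.3}, whose proof is identical,'' spelling out the chain of isomorphisms once for completeness.
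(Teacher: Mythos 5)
Your proposal is correct and takes essentially the same route as the paper: the paper deduces Proposition \ref{prop2.12} from Lemma \ref{lem2.13}, which is exactly the bordered-space analogue of Lemma \ref{lem2.3} you propose, and the paper likewise notes that its proof is a repetition of the analytic chain of isomorphisms, after which both equivalences follow directly from the definitions of the two t-structures.
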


\begin{proof}
This follows from Lemma \ref{lem2.13} below.
\end{proof}

The following lemma can be proved in a similar way to Lemma \ref{lem2.3}.
We shall skip the proof of this lemma.
\begin{lemma}\label{lem2.13}
For any $K\in\BEC(\CC_{X_\infty}^\sub)$,
there exists an isomorphism in $\BDC(\CC_X)$:
$$\rho_{X_\infty^\an}^{-1}\(\sh_{X_\infty^\an}^\sub(K)\)
\simeq \sh_{X_\infty^\an}\left(I_{X_\infty^\an}^\rmE(K)\right).$$
\end{lemma}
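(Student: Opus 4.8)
The plan is to transcribe, essentially verbatim, the proof of Lemma~\ref{lem2.3} to the real analytic bordered space $X_\infty^\an = (X^\an, \tl X^\an)$. The theory of (enhanced) subanalytic sheaves, ind-sheaves, and the comparison functors between them is developed in \cite{Ito24a} for arbitrary real analytic bordered spaces, so all ingredients used in the proof of Lemma~\ref{lem2.3} are available here once one observes that $X_\infty^\an$ is such a space (being built from the smooth completion $\tl X$) and that the functors $I_{X_\infty^\an}^\rmE$, $\sh_{X_\infty^\an}^\sub$, $\rho_{X_\infty^\an}^{-1}$ in the statement are the corresponding bordered-space functors.

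First I would recall the relevant bordered-space functors: the fully faithful $I_{X_\infty^\an}\colon \BDC(\CC_{X_\infty^\an}^\sub)\hookrightarrow\BDC(\I\CC_{X_\infty^\an})$ with right adjoint $\bfR J_{X_\infty^\an}\colon \BDC(\I\CC_{X_\infty^\an})\to \BDC(\CC_{X_\infty^\an}^\sub)$; the functor $\Ish_{X_\infty^\an}\colon \BEC(\I\CC_{X_\infty^\an})\to \BDC(\I\CC_{X_\infty^\an})$ satisfying $\sh_{X_\infty^\an} = \alpha_{X_\infty^\an}\circ\Ish_{X_\infty^\an}$, where $\alpha_{X_\infty^\an}$ is the exact left adjoint of the natural embedding $\iota_{X_\infty^\an}$; and the enhanced comparison functors $I_{X_\infty^\an}^\rmE$ and $J_{X_\infty^\an}^\rmE$. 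The bordered-space versions of \cite[Prop.\:3.7(3)(ii)]{Ito24a}, \cite[Prop.\:3.26(3)]{Ito24a} and \cite[Thm.\:3.18(1)]{Ito24a} provide, respectively, natural isomorphisms $\alpha_{X_\infty^\an}\simeq \rho_{X_\infty^\an}^{-1}\circ\bfR J_{X_\infty^\an}$, then $\bfR J_{X_\infty^\an}\circ\Ish_{X_\infty^\an}\simeq \sh_{X_\infty^\an}^\sub\circ J_{X_\infty^\an}^\rmE$, and finally $J_{X_\infty^\an}^\rmE\circ I_{X_\infty^\an}^\rmE\simeq\id$.

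Then, for $K\in\BEC(\CC_{X_\infty}^\sub)$, I would chain these isomorphisms exactly as in the proof of Lemma~\ref{lem2.3}:
\begin{align*}
\sh_{X_\infty^\an}\(I_{X_\infty^\an}^\rmE(K)\)
&\simeq \alpha_{X_\infty^\an}\(\Ish_{X_\infty^\an}\(I_{X_\infty^\an}^\rmE(K)\)\)\\
&\simeq \rho_{X_\infty^\an}^{-1}\(\bfR J_{X_\infty^\an}\(\Ish_{X_\infty^\an}\(I_{X_\infty^\an}^\rmE(K)\)\)\)\\
&\simeq \rho_{X_\infty^\an}^{-1}\(\sh_{X_\infty^\an}^\sub\(J_{X_\infty^\an}^\rmE\(I_{X_\infty^\an}^\rmE(K)\)\)\)\\
&\simeq \rho_{X_\infty^\an}^{-1}\(\sh_{X_\infty^\an}^\sub(K)\),
\end{align*}
which is the desired isomorphism in $\BDC(\CC_X)$.

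The main obstacle is only bookkeeping: one must verify that the statements of \cite{Ito24a} invoked above are genuinely formulated at the level of general real analytic bordered spaces, not merely for real analytic manifolds, and that the identifications implicit in \cite{Ito24b} (in particular that the functor $I_{X_\infty^\an}^\rmE$ of the algebraic counterpart of Theorem~\ref{thm1.9} is the bordered $I^\rmE$ of \cite{Ito24a}) are used consistently. An alternative, slightly more roundabout route is to pass to the completion $\tl X^\an$, apply Lemma~\ref{lem2.3} there, and then push the resulting isomorphism down to $X_\infty^\an$ using that $\sh$, $\rho^{-1}$ and $I^\rmE$ are all compatible with restriction to the bordered space; but the direct transcription above is cleaner, and is presumably what is meant by "can be proved in a similar way to Lemma~\ref{lem2.3}".
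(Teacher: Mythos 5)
Your proposal is correct and is exactly what the paper intends: the paper itself omits the proof of Lemma \ref{lem2.13}, stating only that it "can be proved in a similar way to Lemma \ref{lem2.3}", and your transcription of that chain of isomorphisms ($\sh = \alpha\circ\Ish$, $\alpha \simeq \rho^{-1}\circ\bfR J$, $\bfR J\circ\Ish \simeq \sh^\sub\circ J^\rmE$, $J^\rmE\circ I^\rmE \simeq \id$) to the bordered space $X_\infty^\an$ is precisely that argument. Your remark that the cited results of \cite{Ito24a} are stated for general real analytic bordered spaces is the right bookkeeping point, and no further work is needed.
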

Then we obtain the following two theorems which are main theorems of this paper.
\begin{theorem}\label{main-thm-3}
The pair $\big({}^p\bfE^{\leq0}_{\CC\mbox{\scriptsize -}c}(\CC_{X_\infty}^\sub),
{}^p\bfE^{\geq0}_{\CC\mbox{\scriptsize -}c}(\CC_{X_\infty}^\sub)\big)$
is a t-structure on $\BEC_{\CC\mbox{\scriptsize -}c}(\CC_{X_\infty}^\sub)$
and its heat
$$\Perv(\CC_{X_\infty}^\sub) :=
{}^p\bfE^{\leq0}_{\CC\mbox{\scriptsize -}c}(\CC_{X_\infty}^\sub)\cap
{}^p\bfE^{\geq0}_{\CC\mbox{\scriptsize -}c}(\CC_{X_\infty}^\sub)$$
is equivalent to the abelian category $\Perv(\I\CC_{X_\infty})$ which is defined in \cite[Def.\:3.20]{Ito21}{\rm :}
$$I_{X_\infty^\an}^{\rmE}\colon \Perv(\CC_{X_\infty}^\sub)\simto \Perv(\I\CC_{X_\infty}).$$
\end{theorem}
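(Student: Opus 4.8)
The plan is to deduce Theorem \ref{main-thm-3} from the already-established facts in exactly the same manner as Theorem \ref{main-thm-1} was deduced from its analytic counterparts, now using the algebraic versions. First I would recall that by \cite[Thm.\:5.6]{Ito24b} the functor $I_{X_\infty^\an}^\rmE$ is an equivalence of triangulated categories $\BEC_{\CC\mbox{\scriptsize -}c}(\CC_{X_\infty}^\sub)\simto\BEC_{\CC\mbox{\scriptsize -}c}(\I\CC_{X_\infty})$, and that by \cite[Thm.\:3.20, Prop.\:4.8]{Ito21} the pair $\big({}^p\bfE^{\leq0}_{\CC\mbox{\scriptsize -}c}(\I\CC_{X_\infty}), {}^p\bfE^{\geq0}_{\CC\mbox{\scriptsize -}c}(\I\CC_{X_\infty})\big)$ is a t-structure on $\BEC_{\CC\mbox{\scriptsize -}c}(\I\CC_{X_\infty})$ with heart $\Perv(\I\CC_{X_\infty})$. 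By Proposition \ref{prop2.12} (which itself rests on Lemma \ref{lem2.13}), the functor $I_{X_\infty^\an}^\rmE$ carries ${}^p\bfE^{\leq0}_{\CC\mbox{\scriptsize -}c}(\CC_{X_\infty}^\sub)$ and ${}^p\bfE^{\geq0}_{\CC\mbox{\scriptsize -}c}(\CC_{X_\infty}^\sub)$ exactly onto ${}^p\bfE^{\leq0}_{\CC\mbox{\scriptsize -}c}(\I\CC_{X_\infty})$ and ${}^p\bfE^{\geq0}_{\CC\mbox{\scriptsize -}c}(\I\CC_{X_\infty})$ respectively.

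Next I would invoke the general principle that a t-structure transports along an equivalence of triangulated categories: if $\Phi\colon \mathcal{T}\simto\mathcal{T}'$ is an equivalence and $(\mathcal{T}'^{\leq0},\mathcal{T}'^{\geq0})$ is a t-structure on $\mathcal{T}'$, then $(\Phi^{-1}\mathcal{T}'^{\leq0},\Phi^{-1}\mathcal{T}'^{\geq0})$ is a t-structure on $\mathcal{T}$ and $\Phi$ restricts to an equivalence of the hearts. Since Proposition \ref{prop2.12} identifies the subcategories defining our candidate pair with the preimages under $I_{X_\infty^\an}^\rmE$ of the two halves of the known t-structure on $\BEC_{\CC\mbox{\scriptsize -}c}(\I\CC_{X_\infty})$, this immediately gives that $\big({}^p\bfE^{\leq0}_{\CC\mbox{\scriptsize -}c}(\CC_{X_\infty}^\sub), {}^p\bfE^{\geq0}_{\CC\mbox{\scriptsize -}c}(\CC_{X_\infty}^\sub)\big)$ is a t-structure on $\BEC_{\CC\mbox{\scriptsize -}c}(\CC_{X_\infty}^\sub)$, proving the first assertion. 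For the second assertion, the equivalence of hearts part of the transport principle, combined once more with Proposition \ref{prop2.12}, yields that $I_{X_\infty^\an}^\rmE$ restricts to an equivalence $\Perv(\CC_{X_\infty}^\sub)\simto\Perv(\I\CC_{X_\infty})$, where $\Perv(\I\CC_{X_\infty})$ is the heart from \cite[Def.\:3.20]{Ito21}.

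Concretely the write-up would read: \emph{The first assertion follows from Proposition \ref{prop2.12}, \cite[Thm.\:3.20, Prop.\:4.8]{Ito21} and \cite[Thm.\:5.6]{Ito24b}. The second assertion follows from Proposition \ref{prop2.12} and \cite[Thm.\:5.6]{Ito24b}.} This mirrors verbatim the structure of the proof of Theorem \ref{main-thm-1}, with the analytic references replaced by their algebraic analogues. I do not anticipate a genuine obstacle here: the only nontrivial input is Lemma \ref{lem2.13}, whose proof is asserted in the excerpt to go through exactly as for Lemma \ref{lem2.3} (replacing $I_X,\bfR J_X,\Ish_X,\alpha_X$ and the cited propositions of \cite{Ito24a} by their bordered-space versions), and the t-structure-transport principle, which is formal. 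The one point that warrants a line of care is checking that the relevant statements of \cite{Ito24a} used in Lemma \ref{lem2.3} — namely the factorization $\sh = \alpha\circ\Ish$, the adjunction between $I$ and $\bfR J$, and Propositions 3.7(3)(ii), 3.26(3) and Theorem 3.18(1) — are all available on the bordered space $X_\infty^\an$; since that reference develops subanalytic and enhanced subanalytic sheaves on real analytic bordered spaces from the outset, this is indeed the case, so the argument is complete.
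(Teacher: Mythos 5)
Your proposal is correct and follows essentially the same route as the paper: the paper's proof likewise deduces both assertions by transporting the known t-structure on $\BEC_{\CC\mbox{\scriptsize -}c}(\I\CC_{X_\infty})$ through the equivalence $I_{X_\infty^\an}^{\rmE}$ using Proposition \ref{prop2.12} (the paper cites \cite[Thm.\:3.19]{Ito21} and \cite[Thm.\:5.10]{Ito24b}, rather than your \cite[Thm.\:3.20, Prop.\:4.8]{Ito21} and \cite[Thm.\:5.6]{Ito24b}, but this is only a difference in which statements of the references are quoted).
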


\begin{proof}
The first assertion follows from Proposition \ref{prop2.12}, \cite[Thm.\:3.19]{Ito21} and \cite[Thm.\:5.10]{Ito24b}.
The second assertion follows from Proposition \ref{prop2.12} and \cite[Thm.\:5.10]{Ito24b}.
\end{proof}

Let us denote by $\(\bfD^{\leq0}_{\rm hol}(\D_X), \bfD^{\leq0}_{\rm hol}(\D_X)\)$
the standard t-structure on $\BDChol(\D_X)$
such that
$\Modhol(\D_X) = \bfD^{\leq0}_{\rm hol}(\D_X)\cap \bfD^{\leq0}_{\rm hol}(\D_X)$.
\begin{theorem}\label{main-thm-4}
For any $\M\in\BDChol(\D_X)$,
we have
\begin{itemize}
\item[{\rm (1)}]
$\M\in \bfD^{\leq0}_{\rm hol}(\D_X)\
\Leftrightarrow
\Sol_{X_\infty}^{\rmE, \sub}(\M)\in {}^p\bfE^{\geq0}_{\CC\mbox{\scriptsize -}c}(\CC_{X_\infty}^\sub)$,

\item[{\rm (2)}]
$\M\in \bfD^{\leq0}_{\rm hol}(\D_X)\
\Leftrightarrow
\Sol_{X_\infty}^{\rmE, \sub}(\M)\in {}^p\bfE^{\leq0}_{\CC\mbox{\scriptsize -}c}(\CC_{X_\infty}^\sub)$.
\end{itemize}
Moreover, the functor $\Sol_{X_\infty}^{\rmE, \sub}$ induces an equivalence of categories:
$$\Sol_{X_\infty}^{\rmE, \sub}(\cdot)[d_X]\colon \Modhol(\D_X)^\op\simto \Perv(\CC_{X_\infty}^\sub).$$
\end{theorem}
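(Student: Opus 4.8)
The plan is to transport the proof of Theorem \ref{main-thm-2} to the bordered (hence algebraic) situation essentially word for word, using the equivalence $I_{X_\infty^\an}^\rmE$ together with Proposition \ref{prop2.12} to move conditions between enhanced subanalytic sheaves and enhanced ind-sheaves on $X_\infty$, and then invoking the statements already available on the ind-sheaf side, namely \cite[Thm.\:3.20, Prop.\:4.8]{Ito21} and \cite[Thm.\:5.10]{Ito24b}. Note that Proposition \ref{prop2.12} rests on Lemma \ref{lem2.13}, the bordered transcription of Lemma \ref{lem2.3}, so the argument presupposes the (omitted) proof of that lemma.

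First I would prove (1) and (2). The two inputs are the commutativity of the algebraic diagram recalled above, i.e.\ the isomorphism of functors $I_{X_\infty^\an}^\rmE\circ\Sol_{X_\infty}^{\rmE,\sub}\simeq\Sol_{X_\infty}^\rmE$ (see \cite[\S 5]{Ito24b}), and the characterization of the standard t-structure on $\BDChol(\D_X)$ by the enhanced solution functor into $\BEC_{\CC\mbox{\scriptsize -}c}(\I\CC_{X_\infty})$, which is part of \cite[Thm.\:3.20, Prop.\:4.8]{Ito21}. Granting these, for $\M\in\BDChol(\D_X)$ Proposition \ref{prop2.12}(2) and the above isomorphism give
\[
\Sol_{X_\infty}^{\rmE,\sub}(\M)\in{}^p\bfE^{\geq0}_{\CC\mbox{\scriptsize -}c}(\CC_{X_\infty}^\sub)
\ \iff\
\Sol_{X_\infty}^{\rmE}(\M)\in{}^p\bfE^{\geq0}_{\CC\mbox{\scriptsize -}c}(\I\CC_{X_\infty}),
\]
and by \cite[Prop.\:4.8]{Ito21} the right-hand condition is $\M\in\bfD^{\leq0}_{\rm hol}(\D_X)$; this proves (1). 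Assertion (2) is obtained in exactly the same way, with Proposition \ref{prop2.12}(1) and ${}^p\bfE^{\leq0}_{\CC\mbox{\scriptsize -}c}$ in place of Proposition \ref{prop2.12}(2) and ${}^p\bfE^{\geq0}_{\CC\mbox{\scriptsize -}c}$.

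For the last assertion, (1) and (2) show that $\Sol_{X_\infty}^{\rmE,\sub}(\cdot)[d_X]$ sends the abelian category $\Modhol(\D_X)=\bfD^{\leq0}_{\rm hol}(\D_X)\cap\bfD^{\geq0}_{\rm hol}(\D_X)$ into the heart $\Perv(\CC_{X_\infty}^\sub)$ of the t-structure produced in Theorem \ref{main-thm-3}, and, conversely, that every object landing in this heart is the image of a holonomic $\D_X$-module; since $\Sol_{X_\infty}^{\rmE,\sub}\colon\BDChol(\D_X)^\op\simto\BEC_{\CC\mbox{\scriptsize -}c}(\CC_{X_\infty}^\sub)$ is already known to be an equivalence of triangulated categories (\cite[Thm.\:5.10]{Ito24b}), restricting it to these two mutually corresponding full subcategories yields the asserted equivalence $\Sol_{X_\infty}^{\rmE,\sub}(\cdot)[d_X]\colon\Modhol(\D_X)^\op\simto\Perv(\CC_{X_\infty}^\sub)$. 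Alternatively, one may compose the equivalence $\Sol_{X_\infty}^\rmE(\cdot)[d_X]\colon\Modhol(\D_X)^\op\simto\Perv(\I\CC_{X_\infty})$ of \cite[Thm.\:3.20]{Ito21} with a quasi-inverse of the heart equivalence $I_{X_\infty^\an}^\rmE\colon\Perv(\CC_{X_\infty}^\sub)\simto\Perv(\I\CC_{X_\infty})$ supplied by Theorem \ref{main-thm-3}. I do not expect a genuine obstacle: the whole argument is formal transport of structure through $I_{X_\infty^\an}^\rmE$ once the ingredients are in place, and the only delicate point --- as already in the analytic Theorem \ref{main-thm-2} --- is to check that the isomorphism $I_{X_\infty^\an}^\rmE\circ\Sol_{X_\infty}^{\rmE,\sub}\simeq\Sol_{X_\infty}^\rmE$ is compatible with the cohomological shifts implicit in the definitions of ${}^p\bfE^{\leq0}_{\CC\mbox{\scriptsize -}c}$ and ${}^p\bfE^{\geq0}_{\CC\mbox{\scriptsize -}c}$, and this compatibility is already recorded in \cite{Ito21,Ito24b}.
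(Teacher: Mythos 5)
Your proposal is correct and follows essentially the same route as the paper: both prove (1) and (2) by transporting the condition through $I_{X_\infty^\an}^\rmE$ via Proposition \ref{prop2.12} (hence Lemma \ref{lem2.13}) together with the compatibility $I_{X_\infty^\an}^\rmE\circ\Sol_{X_\infty}^{\rmE,\sub}\simeq\Sol_{X_\infty}^\rmE$ and the known characterization on the enhanced ind-sheaf side in \cite{Ito21}, and then deduce the heart equivalence from (1), (2) and the equivalence $\Sol_{X_\infty}^{\rmE,\sub}$ of \cite{Ito24b}. The only differences are immaterial citation numberings within \cite{Ito21} and \cite{Ito24b}.
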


\begin{proof}
The assertions (1) and (2) follow from Proposition \ref{prop2.12},
\cite[Thm.\:3.19]{Ito21} and \cite[Thm.\:5.14]{Ito24b}.
The last assertion follows from (1), (2) and \cite[Thm.\:5.14]{Ito24b}.
\end{proof}

We shall call an object of $\Perv(\CC_{X_\infty}^\sub)$ an algebraic enhanced perverse subanalytic sheaf.
From now on we prove some properties of algebraic enhanced perverse subanalytic sheaves.

Let us denote by
$\rmD_{X_\infty^\an}^{\rmE, \sub}\colon \BEC(\CC_{X_\infty^\an}^\sub)^\op\to\BEC(\CC_{X_\infty^\an}^\sub)$
the duality functor for enhanced subanalytic sheaves.
See \cite[The paragraph before Lem.\:3.24]{Ito24a} for the definition.
\begin{proposition}
The duality functor $\rmD_{X_\infty^\an}^{\rmE, \sub}$ induces an equivalence of categories:
$$\rmD_{X_\infty^\an}^{\rmE,\sub} : \Perv(\CC_{X_\infty}^\sub)^{\op}\simto\Perv(\CC_{X_\infty}^\sub).$$
\end{proposition}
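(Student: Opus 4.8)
The plan is to reduce the statement to the already-established analogous result for enhanced ind-sheaves, exactly as all the preceding propositions in this section are proved. The key tool is the equivalence of categories $I_{X_\infty^\an}^{\rmE}\colon \BEC_{\CC\mbox{\scriptsize -}c}(\CC_{X_\infty}^\sub)\simto \BEC_{\CC\mbox{\scriptsize -}c}(\I\CC_{X_\infty})$ from \cite[Thm.\:5.10]{Ito24b}, together with Proposition \ref{prop2.12}, which says that this equivalence is compatible with the pair $\big({}^p\bfE^{\leq0}_{\CC\mbox{\scriptsize -}c}(\CC_{X_\infty}^\sub), {}^p\bfE^{\geq0}_{\CC\mbox{\scriptsize -}c}(\CC_{X_\infty}^\sub)\big)$ and the known t-structure on $\BEC_{\CC\mbox{\scriptsize -}c}(\I\CC_{X_\infty})$.

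First I would establish the algebraic analogue of Lemma \ref{lem2.7}, namely that
$${}^p\bfE^{\geq0}_{\CC\mbox{\scriptsize -}c}(\CC_{X_\infty}^\sub) =
\{K\in\BEC_{\CC\mbox{\scriptsize -}c}(\CC_{X_\infty}^\sub)\ |\
\rmD_{X_\infty^\an}^{\rmE, \sub}(K)\in{}^p\bfE^{\leq0}_{\CC\mbox{\scriptsize -}c}(\CC_{X_\infty}^\sub)\}.$$
The proof is word-for-word the same as that of Lemma \ref{lem2.7}: given $K\in \BEC_{\CC\mbox{\scriptsize -}c}(\CC_{X_\infty}^\sub)$, use Proposition \ref{prop2.12} (2) to transfer the $\geq 0$ condition to $I_{X_\infty^\an}^{\rmE}(K)$, then apply the ind-sheaf duality characterization \cite[Lem.\:4.1]{Ito20} (in its algebraic incarnation, via \cite[Def.\:3.18]{Ito21}) to replace it by $\rmD_{X_\infty^\an}^\rmE\big(I_{X_\infty^\an}^{\rmE}(K)\big)\in{}^p\bfE^{\leq0}_{\CC\mbox{\scriptsize -}c}(\I\CC_{X_\infty})$, use the compatibility isomorphism $I_{X_\infty^\an}^\rmE\big(\rmD_{X_\infty^\an}^{\rmE,\sub}(K)\big)\simeq \rmD_{X_\infty^\an}^\rmE\big(I_{X_\infty^\an}^{\rmE}(K)\big)$ coming from \cite[Prop.\:3.25 (1)]{Ito24a} together with the essential surjectivity/$\CC$-constructibility preservation of $I_{X_\infty^\an}^\rmE$ in \cite[Thm.\:5.10]{Ito24b}, and finally transfer back along Proposition \ref{prop2.12} (1).

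Once this lemma is in place, the proposition is immediate: by Theorem \ref{main-thm-3} the pair $\big({}^p\bfE^{\leq0}_{\CC\mbox{\scriptsize -}c}(\CC_{X_\infty}^\sub), {}^p\bfE^{\geq0}_{\CC\mbox{\scriptsize -}c}(\CC_{X_\infty}^\sub)\big)$ is a t-structure, so $\rmD_{X_\infty^\an}^{\rmE,\sub}$ is an anti-equivalence of $\BEC_{\CC\mbox{\scriptsize -}c}(\CC_{X_\infty}^\sub)$ with itself (it is an anti-equivalence on $\BEC(\CC_{X_\infty^\an}^\sub)$ and preserves $\CC$-constructibility, again via the intertwining with $\rmD_{X_\infty^\an}^\rmE$ and \cite[Thm.\:5.10]{Ito24b}; moreover $\rmD_{X_\infty^\an}^{\rmE,\sub}\circ\rmD_{X_\infty^\an}^{\rmE,\sub}\simeq \id$). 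The lemma shows $\rmD_{X_\infty^\an}^{\rmE,\sub}$ exchanges ${}^p\bfE^{\geq0}$ and ${}^p\bfE^{\leq0}$; applying it twice and using $\rmD^2\simeq\id$ shows it also sends ${}^p\bfE^{\leq0}$ to ${}^p\bfE^{\geq0}$. Hence it is t-exact for the \emph{opposite} t-structure on the source, and therefore restricts to an equivalence $\rmD_{X_\infty^\an}^{\rmE,\sub}\colon \Perv(\CC_{X_\infty}^\sub)^{\op}\simto\Perv(\CC_{X_\infty}^\sub)$ on hearts.

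I do not expect any real obstacle here; everything is formally parallel to the analytic case treated just above in the excerpt, and the only inputs are the algebraic versions of the cited lemmas, all of which are available in \cite{Ito21} and \cite{Ito24b}. The one point requiring a line of care is checking that $\rmD_{X_\infty^\an}^{\rmE,\sub}$ genuinely preserves the subcategory of $\CC$-constructible objects — i.e. that it is well defined as an endofunctor of $\BEC_{\CC\mbox{\scriptsize -}c}(\CC_{X_\infty}^\sub)$ — but this follows at once from the intertwining $I_{X_\infty^\an}^\rmE\circ\rmD_{X_\infty^\an}^{\rmE,\sub}\simeq\rmD_{X_\infty^\an}^\rmE\circ I_{X_\infty^\an}^\rmE$ on $\CC$-constructible objects and the fact that $\rmD_{X_\infty^\an}^\rmE$ preserves $\BEC_{\CC\mbox{\scriptsize -}c}(\I\CC_{X_\infty})$ \cite[Def.\:3.18]{Ito21}.
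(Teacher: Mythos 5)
Your proposal is correct and follows essentially the same route as the paper: the paper's proof simply invokes its Lemma \ref{lem2.17} (the algebraic analogue of Lemma \ref{lem2.7}, stated there with the proof "similar to Lemma \ref{lem2.7}" and omitted), which is exactly the lemma you formulate and prove by transferring along $I_{X_\infty^\an}^{\rmE}$ via Proposition \ref{prop2.12}, the duality compatibility of \cite[Prop.\:3.25]{Ito24a}, and the ind-sheaf characterization from \cite{Ito20, Ito21}. Your additional remarks on $\rmD^{\rmE,\sub}\circ\rmD^{\rmE,\sub}\simeq\id$ and on preservation of $\CC$-constructibility just make explicit what the paper leaves implicit.
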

\begin{proof}
This proposition follows from Lemma \ref{lem2.17} below.
\end{proof}

The following lemma can be proved in a similar way to Lemma \ref{lem2.7}.
We shall skip the proof of this lemma.
\begin{lemma}\label{lem2.17}
We have
$${}^p\bfE^{\geq0}_{\CC\mbox{\scriptsize -}c}(\CC_{X_\infty}^\sub) =
\{K\in\BEC_{\CC\mbox{\scriptsize -}c}(\CC_{X_\infty}^\sub)\ |\
\rmD_{X_\infty^\an}^{\rmE, \sub}(K)\in{}^p\bfE^{\leq0}_{\CC\mbox{\scriptsize -}c}(\CC_{X_\infty}^\sub)\}.$$
\end{lemma}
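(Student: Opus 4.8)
The plan is to transport the claimed identity along the equivalence $I_{X_\infty^\an}^\rmE$ to the category of algebraic $\CC$-constructible enhanced ind-sheaves on $X_\infty$, exactly as in the proof of Lemma \ref{lem2.7}. Fix $K\in\BEC_{\CC\mbox{\scriptsize -}c}(\CC_{X_\infty}^\sub)$. First, by Proposition \ref{prop2.12} (2) one has $K\in{}^p\bfE^{\geq0}_{\CC\mbox{\scriptsize -}c}(\CC_{X_\infty}^\sub)$ if and only if $I_{X_\infty^\an}^{\rmE}(K)\in{}^p\bfE^{\geq0}_{\CC\mbox{\scriptsize -}c}(\I\CC_{X_\infty})$, and by the equivalent description of this t-structure via the duality functor $\rmD_{X_\infty^\an}^{\rmE}$ recalled in Section 1 (cf.\:\cite[Lem.\:4.1]{Ito20}, \cite[Def.\:3.18]{Ito21}) the latter is equivalent to $\rmD_{X_\infty^\an}^{\rmE}\bigl(I_{X_\infty^\an}^{\rmE}(K)\bigr)\in{}^p\bfE^{\leq0}_{\CC\mbox{\scriptsize -}c}(\I\CC_{X_\infty})$.

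Next I would identify $\rmD_{X_\infty^\an}^{\rmE}(I_{X_\infty^\an}^{\rmE}(K))$ with $I_{X_\infty^\an}^\rmE(\rmD_{X_\infty^\an}^{\rmE,\sub}(K))$. Indeed, $\rmD_{X_\infty^\an}^{\rmE}(I_{X_\infty^\an}^{\rmE}(K))$ again lies in $\BEC_{\CC\mbox{\scriptsize -}c}(\I\CC_{X_\infty})$ since the enhanced duality functor preserves algebraic $\CC$-constructibility, and by the bordered-space version of \cite[Prop.\:3.25 (1)]{Ito24a} there is an isomorphism $J_{X_\infty^\an}^\rmE\bigl(\rmD_{X_\infty^\an}^{\rmE}(I_{X_\infty^\an}^{\rmE}(K))\bigr)\simeq\rmD_{X_\infty^\an}^{\rmE,\sub}(K)$; applying $I_{X_\infty^\an}^\rmE$ and using that $I_{X_\infty^\an}^\rmE$ is quasi-inverse to $J_{X_\infty^\an}^\rmE$ on algebraic $\CC$-constructible objects (\cite[Thm.\:5.10]{Ito24b}) yields $I_{X_\infty^\an}^\rmE\bigl(\rmD_{X_\infty^\an}^{\rmE,\sub}(K)\bigr)\simeq\rmD_{X_\infty^\an}^{\rmE}\bigl(I_{X_\infty^\an}^{\rmE}(K)\bigr)$. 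Hence $\rmD_{X_\infty^\an}^{\rmE}(I_{X_\infty^\an}^{\rmE}(K))\in{}^p\bfE^{\leq0}_{\CC\mbox{\scriptsize -}c}(\I\CC_{X_\infty})$ if and only if $I_{X_\infty^\an}^\rmE(\rmD_{X_\infty^\an}^{\rmE,\sub}(K))\in{}^p\bfE^{\leq0}_{\CC\mbox{\scriptsize -}c}(\I\CC_{X_\infty})$, which by Proposition \ref{prop2.12} (1) is equivalent to $\rmD_{X_\infty^\an}^{\rmE,\sub}(K)\in{}^p\bfE^{\leq0}_{\CC\mbox{\scriptsize -}c}(\CC_{X_\infty}^\sub)$. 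Concatenating this chain of equivalences gives the asserted equality of full subcategories.

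The only point requiring care is checking that each ingredient of the proof of Lemma \ref{lem2.7} has a counterpart in the algebraic, bordered-space setting: the compatibility of $I^\rmE$ with the two enhanced duality functors, the fact that $I^\rmE$ and $J^\rmE$ are mutually quasi-inverse on $\CC$-constructible objects (here \cite[Thm.\:5.10]{Ito24b} in place of \cite[Thm.\:3.19]{Ito24b}), and the stability of algebraic $\CC$-constructibility under $\rmD^\rmE$. All of these are already recorded in \cite{Ito24a}, \cite{Ito24b} and \cite{Ito21}, so no new argument is needed, which is why the proof was omitted.
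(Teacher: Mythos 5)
Your proposal is correct and is exactly the argument the paper intends: the paper omits the proof, stating it is "similar to Lemma \ref{lem2.7}", and your chain of equivalences (Proposition \ref{prop2.12}, the duality description of the perverse t-structure on $\BEC_{\CC\mbox{\scriptsize -}c}(\I\CC_{X_\infty})$, the compatibility $I^\rmE\circ\rmD^{\rmE,\sub}\simeq\rmD^\rmE\circ I^\rmE$ on $\CC$-constructible objects via \cite[Prop.\:3.25\,(1)]{Ito24a} and \cite[Thm.\:5.10]{Ito24b}) is precisely the bordered-space transcription of that analytic proof.
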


Recall that there exists the natural embedding functor:
$$e_{X_\infty^\an}^\sub\circ \rho_{{X_\infty^\an}\ast}\colon \BDC_{\CC\mbox{\scriptsize -}c}(\CC_X)
\hookrightarrow \BEC_{\CC\mbox{\scriptsize -}c}(\CC_X^\sub).$$
See \cite[Prop.\:4.21]{Ito24b} for the details.

\begin{proposition}
We have 
\begin{itemize}
\item[\rm (1)]
for any $\SF\in{}^p\bfD_{\CC\mbox{\scriptsize -}c}^{\leq0}(\CC_X)$,
we have $e_{X_\infty^\an}^\sub\(\rho_{{X_\infty^\an}\ast}(\SF)\)\in
{}^p\bfE_{\CC\mbox{\scriptsize -}c}^{\leq0}(\CC_{X_\infty}^\sub)$,

\item[\rm (2)]
for any $\SF\in{}^p\bfD_{\CC\mbox{\scriptsize -}c}^{\geq0}(\CC_X)$,
we have $e_{X_\infty^\an}^\sub\(\rho_{{X_\infty^\an}\ast}(\SF)\)\in
{}^p\bfE_{\CC\mbox{\scriptsize -}c}^{\geq0}(\CC_{X_\infty}^\sub)$.
\end{itemize}
Moreover, the functor $e_{X_\infty^\an}^\sub\circ \rho_{{X_\infty^\an}\ast}$ induces a fully faithful embedding:
$$e_{X_\infty^\an}^\sub\circ\rho_{{X_\infty^\an}\ast}\colon
\Perv(\CC_X)\hookrightarrow \Perv(\CC_{X_\infty}^\sub)$$
and the following diagram is commutative:
\[\xymatrix@R=25pt@C=60pt@M=7pt{
\Modhol(\D_X)^{\op}\ar@{->}[r]^\sim\ar@<1.0ex>@{}[r]^-{\Sol_{X_\infty}^{\rmE, \sub}(\cdot)[d_X]}
 & \Perv(\CC_{X_\infty}^\sub)\\
\Modrh(\D_X)^{\op}\ar@{->}[r]_-{\Sol_X(\cdot)[d_X]}^-{\sim}\ar@{}[u]|-{\bigcup}
&\Perv(\CC_X).
\ar@{^{(}->}[u]_-{e_{X_\infty^\an}^\sub\circ \rho_{{X_\infty^\an} \ast}}
}\]
\end{proposition}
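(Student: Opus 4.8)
The strategy is to reduce every assertion to the corresponding statement for enhanced ind-sheaves on the bordered space $X_\infty$, exactly as in the proofs of Propositions in the analytic case, and then invoke the algebraic counterpart of \cite[Prop.\:4.8]{Ito20}. The key tool is the equivalence $I_{X_\infty^\an}^{\rmE}\colon \BEC_{\CC\mbox{\scriptsize -}c}(\CC_{X_\infty}^\sub)\simto\BEC_{\CC\mbox{\scriptsize -}c}(\I\CC_{X_\infty})$ together with Proposition \ref{prop2.12}, which identifies the putative t-structure on the subanalytic side with the known t-structure on the ind-sheaf side.

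First I would prove assertions (1) and (2). Let $\SF\in{}^p\bfD_{\CC\mbox{\scriptsize -}c}^{\leq0}(\CC_X)$. By Proposition \ref{prop2.12} (1) it suffices to show that $I_{X_\infty^\an}^{\rmE}\(e_{X_\infty^\an}^\sub(\rho_{{X_\infty^\an}\ast}(\SF))\)$ lies in ${}^p\bfE^{\leq0}_{\CC\mbox{\scriptsize -}c}(\I\CC_{X_\infty})$. Here one uses the compatibility of $I_{X_\infty^\an}^{\rmE}$ with the embedding functors: the algebraic analogue of \cite[Props.\:3.4 (4)(i), 3.21]{Ito24a} gives an isomorphism $I_{X_\infty^\an}^{\rmE}\(e_{X_\infty^\an}^\sub(\rho_{{X_\infty^\an}\ast}(\SF))\)\simeq e_{X_\infty^\an}\(\iota_{X_\infty^\an}(\SF)\)$, and then the algebraic version of \cite[Prop.\:4.8 (1)]{Ito20} shows this object is in ${}^p\bfE^{\leq0}_{\CC\mbox{\scriptsize -}c}(\I\CC_{X_\infty})$. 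Assertion (2) is entirely symmetric, using part (2) of Proposition \ref{prop2.12} and part (2) of the relevant ind-sheaf statement.

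Next I would establish that $e_{X_\infty^\an}^\sub\circ\rho_{{X_\infty^\an}\ast}$ restricts to a fully faithful embedding $\Perv(\CC_X)\hookrightarrow\Perv(\CC_{X_\infty}^\sub)$. That it lands in the heart is immediate from (1) and (2), since $\Perv(\CC_X) = {}^p\bfD^{\leq0}_{\CC\mbox{\scriptsize -}c}(\CC_X)\cap{}^p\bfD^{\geq0}_{\CC\mbox{\scriptsize -}c}(\CC_X)$ and $\Perv(\CC_{X_\infty}^\sub)$ is the intersection of the two subcategories of Definition \ref{main-def-2}. Full faithfulness follows because $e_{X_\infty^\an}^\sub\circ\rho_{{X_\infty^\an}\ast}$ is fully faithful as a functor of triangulated categories (this is the content cited from \cite[Prop.\:4.21]{Ito24b}, in its algebraic form \cite[Prop.\:5.21]{Ito24b}), and a fully faithful exact functor of triangulated categories restricts to a fully faithful functor on abelian subcategories defined by a t-structure. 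Finally, the commutativity of the square is a restriction to hearts of the commutative square already recorded in \cite[Thm.\:5.14]{Ito24b} (equivalently the last diagram of the algebraic version of Theorem \ref{thm1.9}); one only needs to check that both vertical functors send hearts to hearts, which is assertions (1),(2) for the left-hand side and the classical regular Riemann--Hilbert/perverse-sheaf correspondence for the right-hand side.

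The main obstacle is bookkeeping rather than conceptual: one must verify that the algebraic analogues of \cite[Prop.\:4.8]{Ito20} and of the compatibility statements \cite[Props.\:3.4 (4)(i), 3.21]{Ito24a} are available on the bordered space $X_\infty$. For \cite[Prop.\:4.8]{Ito20} this is essentially the content of \cite[Thm.\:3.19]{Ito21}; for the compatibility of $I_{X_\infty^\an}^{\rmE}$ with $e^\sub\circ\rho_{\ast}$ one invokes \cite[Prop.\:5.21]{Ito24b}. Once these ingredients are in place, the proof runs word-for-word parallel to the analytic case treated in the preceding proposition, so I would simply write ``The first and second assertions follow from \cite[Thm.\:3.19]{Ito21}, \cite[Prop.\:5.21]{Ito24b} and Proposition \ref{prop2.12}. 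The last assertion follows from the first and second assertions and \cite[Thm.\:5.14]{Ito24b}.''
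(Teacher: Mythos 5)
Your proposal is correct and follows essentially the same route as the paper: reduce via Proposition \ref{prop2.12} to the enhanced ind-sheaf statement on $X_\infty$, use the compatibility of $I_{X_\infty^\an}^{\rmE}$ with the embedding functors (the algebraic form of \cite[Props.\:3.4 (4)(i), 3.21]{Ito24a}), invoke the ind-sheaf perversity result from \cite{Ito21} (the paper cites \cite[Prop.\:3.21]{Ito21} rather than Thm.\:3.19, a minor numbering slip on your part), and deduce the commutative diagram from (1), (2) and \cite[Thm.\:5.14]{Ito24b}.
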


\begin{proof}
The first, second and third assertions follow from
\cite[Prop.\:3.21]{Ito21}, \cite[Props.\:3.4 (4)(i), 3.21]{Ito24a} and Proposition \ref{prop2.12}.
The last assertion follows from the first and second assertions and \cite[Thm.\:5.14]{Ito24b}.
\end{proof}

Let us recall that there exist a morphism $j_{X_\infty^\an}\colon X_\infty^\an\to \tl{X}^\an$ of bordered spaces 
and a functor
$$\bfE j_{X_\infty^\an!!}\colon \BEC(\CC_{X_\infty^\an}^\sub)\to \BEC(\CC_{\tl{X}^\an}^\sub).$$
See \cite[\S\S\:2.4, 3.3]{Ito24a} for the details.
Then we have the following lemma.
Remark that a bordered space $\tl{X}_\infty^\an$ is isomorphic to $\tl{X}^\an$,
where $\tl{X}^\an$ is identified with a bordered space $(\tl{X}^\an, \tl{X}^\an)$. 
See \cite[Lem.\:2.3]{Ito21} for the details.
Hence, we can define a t-structure 
$\({}^p\bfE_{\CC\mbox{\scriptsize -}c}^{\leq0}(\CC_{\tl{X}}^\sub),
{}^p\bfE_{\CC\mbox{\scriptsize -}c}^{\geq0}(\CC_{\tl{X}}^\sub)\)$
on $\BEC_{\CC\mbox{\scriptsize -}c}(\CC_{\tl{X}}^\sub)$
by Definition \ref{main-def-2}.

\begin{lemma}\label{lem2.19}
For any $K\in\BEC_{\CC\mbox{\scriptsize -}c}(\I\CC_{X_\infty})$,
we have
\begin{itemize}
\item[\rm(1)]
$K\in{}^p\bfE_{\CC\mbox{\scriptsize -}c}^{\leq0}(\CC_{X_\infty}^\sub)
\Longleftrightarrow
\bfE j_{X^\an_\infty!!}(K)\in{}^p\bfE_{\CC\mbox{\scriptsize -}c}^{\leq0}(\CC_{\tl{X}}^\sub)$,
\item[\rm(2)]
$K\in{}^p\bfE_{\CC\mbox{\scriptsize -}c}^{\geq0}(\I\CC_{X_\infty})
\Longleftrightarrow
\bfE j_{X^\an_\infty!!}(K)\in{}^p\bfE_{\CC\mbox{\scriptsize -}c}^{\geq0}(\I\CC_{\tl{X}})$.
\end{itemize}
\end{lemma}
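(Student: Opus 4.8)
The plan is to reduce everything to the corresponding statement for enhanced ind-sheaves via the equivalence $I_{X_\infty^\an}^\rmE$. First I would observe that, by Definition \ref{main-def-2} together with Proposition \ref{prop2.12}, the conditions $K\in{}^p\bfE_{\CC\mbox{\scriptsize -}c}^{\leq0}(\CC_{X_\infty}^\sub)$ and $K\in{}^p\bfE_{\CC\mbox{\scriptsize -}c}^{\geq0}(\CC_{X_\infty}^\sub)$ are equivalent to $I_{X_\infty^\an}^\rmE(K)$ lying in ${}^p\bfE_{\CC\mbox{\scriptsize -}c}^{\leq0}(\I\CC_{X_\infty})$, respectively ${}^p\bfE_{\CC\mbox{\scriptsize -}c}^{\geq0}(\I\CC_{X_\infty})$; the same applies on $\tl X$ using the t-structure on $\BEC_{\CC\mbox{\scriptsize -}c}(\CC_{\tl X}^\sub)$ mentioned just before the lemma and its ind-sheaf counterpart on $\BEC_{\CC\mbox{\scriptsize -}c}(\I\CC_{\tl X})$. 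So the statement becomes a statement purely about $\bfE j_{X^\an_\infty!!}$ and the perverse t-structures on the ind-sheaf side.

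Next I would invoke the compatibility of $I^\rmE$ with the pushforward functor $\bfE j_{X^\an_\infty!!}$: there is a natural isomorphism $I_{\tl X^\an}^\rmE\circ\bfE j_{X^\an_\infty!!}\simeq \bfE j_{X^\an_\infty!!}\circ I_{X^\an_\infty}^\rmE$ (this follows from the general commutation of $I^\rmE$ with pushforwards established in \cite{Ito24a}, cf.\ the functoriality properties of $I_X^\rmE$ used already in Proposition \ref{prop2.9}). Combining this with the two applications of Proposition \ref{prop2.12} (one on $X$, one on $\tl X$), the lemma reduces to the analogous claim for enhanced ind-sheaves, namely that $\bfE j_{X^\an_\infty!!}$ is t-exact for the perverse t-structures $\big({}^p\bfE^{\leq0}_{\CC\mbox{\scriptsize -}c}(\I\CC_{X_\infty}),{}^p\bfE^{\geq0}_{\CC\mbox{\scriptsize -}c}(\I\CC_{X_\infty})\big)$ and $\big({}^p\bfE^{\leq0}_{\CC\mbox{\scriptsize -}c}(\I\CC_{\tl X}),{}^p\bfE^{\geq0}_{\CC\mbox{\scriptsize -}c}(\I\CC_{\tl X})\big)$ defined in \cite[Def.\:3.18]{Ito21}.

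For that reduced claim I would unwind the definition of the enhanced ind-sheaf t-structure, which is characterized by applying the sheafification functor $\sh_{X^\an_\infty}$ and landing in the ordinary perverse t-structure on $\BDC_{\CC\mbox{\scriptsize -}c}(\CC_X)$. The key point is then that $\sh$ intertwines $\bfE j_{X^\an_\infty!!}$ with the ordinary proper direct image $\bfR j_{X^\an_\infty!!}$ of $\CC$-constructible sheaves along the open inclusion $X^\an\hookrightarrow\tl X^\an$, together with the classical fact that this sheaf-level functor is perverse t-exact on $\CC$-constructible sheaves (indeed $j_!$ and $j_*$ are t-exact for an affine open inclusion, and here one uses $j_{!!}=j_!$ after restriction and the bordered-space identification $\tl X_\infty^\an\simeq\tl X^\an$). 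I expect the main obstacle to be precisely the careful bookkeeping of this last compatibility $\sh_{\tl X^\an}\circ\bfE j_{X^\an_\infty!!}\simeq \bfR j_{!}\circ\sh_{X^\an_\infty}$ on the bordered space $X_\infty^\an$, and checking that the target perverse condition on $\tl X^\an$ is the right one; the rest is a formal transport along the already-established equivalences. As this lemma is stated as a tool and proved by the same pattern as the earlier reductions, I would present the argument concisely, citing \cite{Ito21} for the ind-sheaf t-exactness of $\bfE j_{!!}$ and \cite{Ito24a} for the commutation of $I^\rmE$ with $\bfE j_{!!}$, and only spelling out the application of Proposition \ref{prop2.12} on both $X$ and $\tl X$.
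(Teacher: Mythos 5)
Your proposal matches the paper's proof essentially step for step: apply Proposition \ref{prop2.12} on $X_\infty$ and again on $\tl{X}$, use the commutation $\bfE j_{X^\an_\infty!!}\circ I^\rmE_{X^\an_\infty}\simeq I^\rmE_{\tl{X}^\an}\circ \bfE j_{X^\an_\infty!!}$ from \cite[Lem.\:3.19\:(2)(ii)]{Ito24a}, and quote the ind-sheaf counterpart of the statement, \cite[Lem.\:3.23]{Ito21}, exactly as the paper does. (Your side sketch of reproving that ind-sheaf fact via affineness of $j$ and t-exactness of $j_!$, $j_*$ is not needed and would not go through as stated, since $X\hookrightarrow\tl{X}$ need not be affine and one needs the full equivalence rather than one-directional t-exactness, but as you ultimately cite \cite{Ito21} for it this does not affect the correctness of your argument.)
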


\begin{proof}
Since the proof of (2) is similar, we shall only prove (1).
Let $K\in\BEC_{\CC\mbox{\scriptsize -}c}(\I\CC_{X_\infty})$.
By Proposition \ref{prop2.12},
we have
$$K\in{}^p\bfE_{\CC\mbox{\scriptsize -}c}^{\leq0}(\CC_{X_\infty}^\sub)
\Longleftrightarrow
I_{X_\infty^\an}^{\rmE}(K)\in {}^p\bfE^{\leq0}_{\CC\mbox{\scriptsize -}c}(\I\CC_{X_\infty}).$$
Moreover, by \cite[Lem.\:3.23]{Ito21} we have
$$I_{X_\infty^\an}^{\rmE}(K)\in {}^p\bfE^{\leq0}_{\CC\mbox{\scriptsize -}c}(\I\CC_{X_\infty})
\Longleftrightarrow
\bfE j_{X^\an_\infty!!} I_{X_\infty^\an}^{\rmE}(K)\in {}^p\bfE^{\leq0}_{\CC\mbox{\scriptsize -}c}(\I\CC_{\tl{X}}).$$
Since, we have an isomorphism in $\BEC(\I\CC_{\tl{X}})$
$$\bfE j_{X^\an_\infty!!} I_{X_\infty^\an}^{\rmE}(K)\simeq
I_{\tl{X}^\an}^{\rmE}\bfE j_{X^\an_\infty!!}(K)$$
by \cite[Lem.\:3.19 (2)(ii)]{Ito24a},
we have 
$$\bfE j_{X^\an_\infty!!} I_{X_\infty^\an}^{\rmE}(K)\in {}^p\bfE^{\leq0}_{\CC\mbox{\scriptsize -}c}(\I\CC_{\tl{X}})
\Longleftrightarrow
\bfE j_{X^\an_\infty!!}(K)\in{}^p\bfE_{\CC\mbox{\scriptsize -}c}^{\leq0}(\CC_{\tl{X}}^\sub)$$
by Proposition \ref{prop2.12}.

The proof is completed.
\end{proof}

Let us recall that the triangulated category $\BEC_{\CC\mbox{\scriptsize -}c}(\CC_{X}^\sub)$
is the full triangulated subcategory of $\BEC_{\CC\mbox{\scriptsize -}c}(\CC_{X^\an}^\sub)$,
see \cite[Cor.\:5.7]{Ito24b} for the details.
\begin{lemma}\label{lem2.20}
Let $X$ be a smooth \underline{complete algebraic variety} over $\CC$.
Then we have
\begin{itemize}
\item[\rm(1)]
${}^p\bfE_{\CC\mbox{\scriptsize -}c}^{\leq 0}(\CC_{X}^\sub) = 
{}^p\bfE_{\CC\mbox{\scriptsize -}c}^{\leq 0}(\CC_{X^\an}^\sub)\cap
\BEC_{\CC\mbox{\scriptsize -}c}(\CC_{X}^\sub)$,

\item[\rm(2)]
${}^p\bfE_{\CC\mbox{\scriptsize -}c}^{\geq 0}(\CC_{X}^\sub)= 
{}^p\bfE_{\CC\mbox{\scriptsize -}c}^{\geq 0}(\CC_{X^\an}^\sub)\cap
\BEC_{\CC\mbox{\scriptsize -}c}(\CC_{X}^\sub)$.
\end{itemize}
\end{lemma}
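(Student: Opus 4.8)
The plan is to reduce this statement to the definitions by unwinding what the t-structure truncation functors detect. Recall that, by Definition \ref{main-def-2}, membership of $K$ in ${}^p\bfE_{\CC\mbox{\scriptsize -}c}^{\leq 0}(\CC_{X}^\sub)$ or ${}^p\bfE_{\CC\mbox{\scriptsize -}c}^{\leq 0}(\CC_{X^\an}^\sub)$ is tested by the single condition $\rho_{X_\infty^\an}^{-1}\(\sh_{X^\an_\infty}^\sub(K)\)\in{}^p\bfD^{\leq0}_{\CC\mbox{\scriptsize -}c}(\CC_X)$, once we know $K$ lies in the appropriate triangulated category of $\CC$-constructible enhanced subanalytic sheaves. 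Since $X$ is complete, the bordered space $X_\infty^\an$ is isomorphic to $\tl{X}^\an = X^\an$ identified with $(X^\an, X^\an)$ (this is \cite[Lem.\:2.3]{Ito21}, as recalled in the paragraph before Lemma \ref{lem2.19}), and $\BEC_{\CC\mbox{\scriptsize -}c}(\CC_{X}^\sub)$ is a full triangulated subcategory of $\BEC_{\CC\mbox{\scriptsize -}c}(\CC_{X^\an}^\sub)$ by \cite[Cor.\:5.7]{Ito24b}, as recalled just before the lemma. Therefore the two defining conditions --- for the algebraic and the analytic t-structure --- are literally the same functorial condition applied to the same object $K$, and both sides of (1) consist of exactly those $K\in\BEC_{\CC\mbox{\scriptsize -}c}(\CC_{X}^\sub)$ satisfying $\rho^{-1}(\sh^\sub(K))\in{}^p\bfD^{\leq0}_{\CC\mbox{\scriptsize -}c}(\CC_X)$. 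Hence (1) follows, and (2) is identical with $\leq 0$ replaced by $\geq 0$.

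The one point that requires care --- and which I expect to be the main (if modest) obstacle --- is the compatibility of the functor $\rho^{-1}\circ\sh^\sub$ and of the notion of $\CC$-constructibility with the inclusion $\BEC_{\CC\mbox{\scriptsize -}c}(\CC_{X}^\sub)\hookrightarrow\BEC_{\CC\mbox{\scriptsize -}c}(\CC_{X^\an}^\sub)$; that is, one must check that computing $\rho^{-1}_{X_\infty^\an}(\sh^\sub_{X^\an_\infty}(K))$ for $K$ regarded as an algebraic $\CC$-constructible enhanced subanalytic sheaf on $X_\infty^\an$ gives the same object of $\BDC_{\CC\mbox{\scriptsize -}c}(\CC_X)$ as computing it for $K$ regarded as an object of $\BEC(\CC_{X^\an}^\sub)$. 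But this is precisely what is packaged in the identification $X_\infty^\an\cong X^\an$ of \cite[Lem.\:2.3]{Ito21} together with the functoriality of $\rho^{-1}$ and $\sh^\sub$ recalled in \S\:2.2, so it is automatic once the set-up is stated correctly. I would therefore phrase the proof as: combine \cite[Cor.\:5.7]{Ito24b}, \cite[Lem.\:2.3]{Ito21}, and Definition \ref{main-def-2}, observing that under the isomorphism $X_\infty^\an\cong X^\an$ the defining conditions of the two t-structures coincide on the common triangulated subcategory.

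Alternatively, if one prefers a proof that passes through enhanced ind-sheaves, one can transport everything through the equivalence $I_{X_\infty^\an}^\rmE\colon \BEC_{\CC\mbox{\scriptsize -}c}(\CC_{X_\infty}^\sub)\simto\BEC_{\CC\mbox{\scriptsize -}c}(\I\CC_{X_\infty})$ of \cite[Thm.\:5.10]{Ito24b} and use Proposition \ref{prop2.12}, reducing to the corresponding statement for $\CC$-constructible enhanced ind-sheaves on $X_\infty$ versus on $X^\an$; that statement in turn follows from the analogous compatibility in \cite{Ito21}. Either route is short; I would present the first one, as it avoids invoking the ind-sheaf machinery and stays entirely on the subanalytic side. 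No display-math is needed in the write-up beyond what already appears in the statement.
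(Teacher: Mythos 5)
Your main argument has a genuine gap: the two defining conditions are not ``literally the same''. In Definition \ref{main-def-1} (applied to the complex manifold $X^\an$) the test is $\rho_{X^\an}^{-1}\(\sh_{X^\an}^\sub(K)\)\in{}^p\bfD^{\leq0}_{\CC\mbox{\scriptsize -}c}(\CC_{X^\an})$, i.e.\ the perverse t-structure on \emph{analytically} $\CC$-constructible sheaves, whereas in Definition \ref{main-def-2} the test is membership in ${}^p\bfD^{\leq0}_{\CC\mbox{\scriptsize -}c}(\CC_{X})$, the perverse t-structure on \emph{algebraically} $\CC$-constructible sheaves. The functor $\rho^{-1}\circ\sh^\sub$ is indeed the same on the common subcategory (that part is unproblematic, and \cite[Cor.\:5.7]{Ito24b} together with \cite[Lem.\:2.3]{Ito21} does give the identification of bordered spaces), but the target conditions differ: the algebraic one contains the requirement that $\rho^{-1}\(\sh^\sub(K)\)$ be constructible with respect to an \emph{algebraic} stratification. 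Consequently the inclusion ${}^p\bfE^{\leq0}_{\CC\mbox{\scriptsize -}c}(\CC_{X^\an}^\sub)\cap\BEC_{\CC\mbox{\scriptsize -}c}(\CC_{X}^\sub)\subset{}^p\bfE^{\leq0}_{\CC\mbox{\scriptsize -}c}(\CC_{X}^\sub)$ needs the nontrivial fact that the sheafification of an algebraic $\CC$-constructible enhanced subanalytic sheaf is algebraically $\CC$-constructible; this is exactly the point you dismiss as ``automatic once the set-up is stated correctly'', and it is not formal.

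The paper closes precisely this gap by passing to the $\D$-module side: any $K\in\BEC_{\CC\mbox{\scriptsize -}c}(\CC_{X}^\sub)$ is of the form $\Sol_X^{\rmE,\sub}(\M)[d_X]$ with $\M\in\BDChol(\D_X)$ by \cite[Thm.\:5.14]{Ito24b}, and then Theorem \ref{main-thm-4}\,(2) (algebraic side) and Theorem \ref{main-thm-2}\,(2) (analytic side, applied to $\M^\an$) convert both perversity conditions into conditions on $\M$ and on $\M^\an$, which are compared using the exactness \emph{and faithfulness} of the analytification functor $(\cdot)^\an\colon\Mod(\D_X)\to\Mod(\D_{X^\an})$; faithfulness is what gives the delicate direction. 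Your alternative route via $I^\rmE$ and Proposition \ref{prop2.12} would be fine only if you can point to a precise enhanced ind-sheaf analogue of the present lemma in \cite{Ito21}; phrased as ``the analogous compatibility in \cite{Ito21}'' it defers the same issue rather than resolving it, so either cite such a statement explicitly or argue through the Riemann--Hilbert correspondence as above.
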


\begin{proof}
Since the proof of (2) is similar, we shall only prove (1).

Let $K$ be an object of ${}^p\bfE_{\CC\mbox{\scriptsize -}c}^{\leq 0}(\CC_{X}^\sub)$.
Then there exists an object $\M\in \DChol^{\geq0}(\D_X)$
such that $$K\simeq\Sol_X^{\rmE,\sub}(\M)[d_X] (=\Sol_{X^\an}^{\rmE,\sub}(\M^\an))$$
by \cite[Thm.\:5.14]{Ito24b} and Theorem \ref{main-thm-4} (2).
Here, $$(\cdot)^\an\colon\Mod(\D_X)\to\Mod(\D_{X^\an})$$
is the analytification functor.
Since the analytification functor is exact,
we have $\M^\an\in\DChol^{\geq0}(\D_{X^\an})$,
and hence we have $$\Sol_{X^\an}^{\rmE,\sub}(\M^\an)[d_X]
\in{}^p\bfE_{\CC\mbox{\scriptsize -}c}^{\leq 0}(\CC_{X^\an}^\sub)$$
by Theorem \ref{main-thm-2} (2).
Therefore we obtain $$K\in{}^p\bfE_{\CC\mbox{\scriptsize -}c}^{\leq 0}(\CC_{X^\an}^\sub)\cap
\BEC_{\CC\mbox{\scriptsize -}c}(\CC_{X}^\sub).$$

Let $K$ be an object of
${}^p\bfE_{\CC\mbox{\scriptsize -}c}^{\leq 0}(\CC_{X^\an}^\sub)
\cap\BEC_{\CC\mbox{\scriptsize -}c}(\CC_{X}^\sub)$.
Since $K\in\BEC_{\CC\mbox{\scriptsize -}c}(\CC_{X}^\sub)$ there exists an object $\M\in\BDChol(\D_X)$ such that 
$$K\simeq\Sol_X^{\rmE,\sub}(\M)[d_X] \big( = \Sol_{X^\an}^{\rmE,\sub}(\M^\an)[d_X]\big)$$
by \cite[Thm.\:5.14]{Ito24b}.
Since $K\in{}^p\bfE_{\CC\mbox{\scriptsize -}c}^{\leq 0}(\I\CC_{X^\an})$
we have $\M^\an\in\DChol^{\geq0}(\D_{X^\an})$
by Theorem \ref{main-thm-2} (2),
and hence we obtain $\M\in\DChol^{\geq0}(\D_{X})$
because the analytification functor $(\cdot)^\an\colon\Mod(\D_X)\to\Mod(\D_{X^\an})$ is exact and faithful.
Therefore we have $$K\simeq\Sol_X^\rmE(\M)[d_X]\in{}^p\bfE_{\CC\mbox{\scriptsize -}c}^{\leq 0}(\CC_{X}^\sub)$$
by Theorem \ref{main-thm-2} (2).
\end{proof}

Let us recall that any morphism $f\colon X\to Y$ of smooth algebraic varieties induces
a morphism $f_\infty^\an\colon X_\infty^\an\to Y_\infty^\an$ of bordered spaces.
See \cite[\S\:2.3]{Ito21} for the details.

\begin{proposition}
Let $f\colon X\to Y$ be a morphism of smooth algebraic varieties.
We assume that there exists a non-negative integer $d\in \ZZ_{\geq0}$
such that $\dim f^{-1}(y)\leq d$ for any $y\in Y$.
\begin{itemize}
\item[\rm(1)]
For any $K\in{}^p\bfE_{\CC\mbox{\scriptsize -}c}^{\leq0}(\CC_{X_\infty}^\sub)$
we have $\bfE f^\an_{\infty!!} K\in{}^p\bfE_{\CC\mbox{\scriptsize -}c}^{\leq d}(\CC_{Y_\infty}^\sub)$.
\item[\rm(2)]
For any $K\in{}^p\bfE_{\CC\mbox{\scriptsize -}c}^{\geq0}(\CC_{X_\infty}^\sub)$
we have $\bfE f^\an_{\infty\ast} K\in{}^p\bfE_{\CC\mbox{\scriptsize -}c}^{\geq -d}(\CC_{Y_\infty}^\sub)$.
\item[\rm(3)]
For any $L\in{}^p\bfE_{\CC\mbox{\scriptsize -}c}^{\leq0}(\CC_{Y_\infty}^\sub)$
we have $\bfE (f_\infty^\an)^{-1} L\in{}^p\bfE_{\CC\mbox{\scriptsize -}c}^{\leq d}(\CC_{X_\infty}^\sub)$.
\item[\rm(4)]
For any $L\in{}^p\bfE_{\CC\mbox{\scriptsize -}c}^{\geq0}(\CC_{Y_\infty}^\sub)$
we have $\bfE (f_\infty^\an)^! L\in{}^p\bfE_{\CC\mbox{\scriptsize -}c}^{\geq -d}(\CC_{X_\infty}^\sub)$.
\end{itemize}
\end{proposition}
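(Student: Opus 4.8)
The plan is to reduce each of the four $t$-exactness bounds in the algebraic case to the already-established analytic statements (Propositions \ref{prop2.9} and the proposition immediately following it), exactly as every other result in this section is reduced to its ind-sheaf or analytic counterpart. The bridge is Proposition \ref{prop2.12}, which says that an object $K\in\BEC_{\CC\mbox{\scriptsize -}c}(\CC_{X_\infty}^\sub)$ lies in ${}^p\bfE^{\leq0}$ (resp. ${}^p\bfE^{\geq0}$) if and only if $I_{X_\infty^\an}^{\rmE}(K)$ does, and the companion fact (stated in \cite[Thm.\:5.10]{Ito24b}) that $I_{X_\infty^\an}^{\rmE}$ is compatible with the six operations on bordered spaces, so that $I_{Y_\infty^\an}^{\rmE}\circ\bfE f_{\infty!!}^\an\simeq \bfE f_{\infty!!}^{\I,\an}\circ I_{X_\infty^\an}^{\rmE}$ and similarly for $\bfE f_{\infty\ast}^\an$, $\bfE (f_\infty^\an)^{-1}$, $\bfE (f_\infty^\an)^!$.

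Concretely, for (1): given $K\in{}^p\bfE^{\leq0}_{\CC\mbox{\scriptsize -}c}(\CC_{X_\infty}^\sub)$, Proposition \ref{prop2.12}(1) gives $I_{X_\infty^\an}^{\rmE}(K)\in{}^p\bfE^{\leq0}_{\CC\mbox{\scriptsize -}c}(\I\CC_{X_\infty})$; the algebraic right-$t$-exactness bound for $\bfE f_{\infty!!}$ on enhanced ind-sheaves — which is the algebraic analogue of \cite[Prop.\:4.9(1)]{Ito20}, proved in \cite{Ito21} — yields $\bfE f_{\infty!!}^{\I,\an}I_{X_\infty^\an}^{\rmE}(K)\in{}^p\bfE^{\leq d}_{\CC\mbox{\scriptsize -}c}(\I\CC_{Y_\infty})$; by the commutation of $I^\rmE$ with $\bfE f_{\infty!!}^\an$ this is $I_{Y_\infty^\an}^{\rmE}\bfE f_{\infty!!}^\an(K)$; and applying the shifted form of Proposition \ref{prop2.12}(1) in the reverse direction gives $\bfE f_{\infty!!}^\an K\in{}^p\bfE^{\leq d}_{\CC\mbox{\scriptsize -}c}(\CC_{Y_\infty}^\sub)$. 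The arguments for (2), (3), (4) are identical, replacing $\bfE f_{\infty!!}$ by $\bfE f_{\infty\ast}$, $\bfE (f_\infty^\an)^{-1}$, $\bfE (f_\infty^\an)^!$ respectively, using Proposition \ref{prop2.12}(2) for (2) and (4), part (1) for (3), and the corresponding bounds from \cite{Ito21} together with the relevant compatibility isomorphisms from \cite[Lem.\:3.19]{Ito24a}.

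Alternatively — and this is perhaps cleaner and parallels Lemma \ref{lem2.19} more directly — one can argue through the analytic statements already proved in Section 2.1 rather than through ind-sheaves, using that $\BEC_{\CC\mbox{\scriptsize -}c}(\CC_{X_\infty}^\sub)$ is a full subcategory of $\BEC_{\CC\mbox{\scriptsize -}c}(\CC_{X_\infty^\an}^\sub)$ (via \cite[Cor.\:5.7]{Ito24b}), reducing to the proper/non-proper analytic cases after compactifying $f$; but the constructibility and bordered-space bookkeeping makes the ind-sheaf route less error-prone since \cite{Ito21} already records the needed bounds in the algebraic bordered setting.

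The main obstacle is purely bookkeeping rather than conceptual: one must verify that the relevant $t$-exactness estimates for the six operations on \emph{algebraic} $\CC$-constructible enhanced ind-sheaves on bordered spaces are genuinely available in \cite{Ito21} with the dimension bound $d$ (the analytic versions are \cite[Props.\:4.9, 4.10]{Ito20}), and that $I_{X_\infty^\an}^\rmE$ commutes with all four operations $\bfE f_{\infty!!}^\an$, $\bfE f_{\infty\ast}^\an$, $\bfE(f_\infty^\an)^{-1}$, $\bfE(f_\infty^\an)^!$ — the $!!$ and $^{-1}$ cases follow from \cite[Lem.\:3.19]{Ito24a}, while the $\ast$ and $^!$ cases require the adjoint version, which should likewise be in \cite{Ito24a} or follow formally by adjunction on the constructible subcategory where all operations preserve constructibility. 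Once these references are in place the proof is a three-line diagram chase for each of the four parts, entirely analogous to the proofs of Propositions \ref{prop2.9} and Lemma \ref{lem2.19} above.
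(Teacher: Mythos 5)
Your argument is essentially correct, but it is not the route the paper takes, so let me compare. You transfer the problem along the equivalence $I^\rmE_{X_\infty^\an}$ into the world of algebraic enhanced ind-sheaves: Proposition \ref{prop2.12} in both directions, the t-exactness bounds of \cite{Ito21} for the operations $\bfE f_{\infty!!}$, $\bfE f_{\infty\ast}$, $\bfE f_\infty^{-1}$, $\bfE f_\infty^{!}$ on $\BEC_{\CC\mbox{\scriptsize -}c}(\I\CC_{X_\infty})$, and the commutation of $I^\rmE$ with these operations. The paper instead stays entirely in the subanalytic setting and transfers along the compactification: by Lemma \ref{lem2.19} one replaces $K$ by $\bfE j_{X^\an_\infty!!}K$ on $\tl{X}$, by Lemma \ref{lem2.20} the algebraic t-structure there agrees with the analytic one on $\BEC_{\CC\mbox{\scriptsize -}c}(\CC_{\tl{X}}^\sub)$, then the \emph{analytic} Propositions of Section 2.1 (Proposition \ref{prop2.9} for the proper morphism $\tl{f}^\an$, and its non-proper companion for $\bfE f^{-1}$, $\bfE f^!$) give the bound $\leq d$ (resp.\ $\geq -d$), and one returns via the isomorphism $\bfE j_{Y^\an_\infty!!}\circ\bfE f^\an_{\infty!!}\simeq\bfE\tl{f}^\an_{!!}\circ\bfE j_{X^\an_\infty!!}$, the preservation of algebraic $\CC$-constructibility by $\bfE\tl{f}^\an_{!!}$, and Lemmas \ref{lem2.20}, \ref{lem2.19} again. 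What each approach buys: yours is a three-line diagram chase \emph{provided} the ingredients you flag are really on record --- the bounds with the fibre-dimension shift $d$ in the bordered algebraic ind-sheaf setting are indeed in \cite{Ito21} (this section is modelled on that paper item by item), and the commutation of $I^\rmE$ with $\bfE f_{\infty!!}$, $\bfE f_\infty^{-1}$, $\bfE f_\infty^{!}$ is in \cite[Prop.\:3.16\,(2)]{Ito24a}; the only genuinely missing citation is the $\bfE f_{\infty\ast}$ case, which, as you say, must be extracted by adjunction on the constructible subcategories (where all operations preserve constructibility and $I^\rmE$, $J^\rmE$ are quasi-inverse by \cite[Thm.\:5.10]{Ito24b}), so you should spell that adjunction argument out rather than leave it as ``should likewise be in \cite{Ito24a}''. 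The paper's compactification route avoids that issue, since after reduction to $\tl{f}^\an$ only $\bfE f_{!!}$, $\bfE f^{-1}$, $\bfE f^!$ of the analytic statements are needed, at the price of Lemma \ref{lem2.20} and the constructibility bookkeeping; your dismissed ``alternative'' is in fact precisely the paper's proof.
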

\newpage

\begin{proof}
Since the proofs of these assertions in the proposition are similar,
we shall only prove the assertion (1).

Let $K$ be an object of ${}^p\bfE_{\CC\mbox{\scriptsize -}c}^{\leq0}(\CC_{X_\infty}^\sub)$.
Then we have $\bfE j_{X^\an_\infty!!}(K)\in{}^p\bfE_{\CC\mbox{\scriptsize -}c}^{\leq0}(\CC_{\tl{X}}^\sub)$
by Lemma \ref{lem2.19} (1) and hence
$$\bfE j_{X^\an_\infty!!}K\in {}^p\bfE_{\CC\mbox{\scriptsize -}c}^{\leq 0}(\CC_{\tl{X}^\an}^\sub)\cap
\BEC_{\CC\mbox{\scriptsize -}c}(\CC_{\tl{X}}^\sub)$$
by Lemma \ref{lem2.20} (1).
Let us denote by $\tl{f}\colon \tl{X}\to \tl{Y}$ a morphism of complete algebraic varieties such that $\tl{f}|_X = f$
and denote by $\tl{f}^\an\colon \tl{X}^\an\to \tl{Y}^\an$ the morphism of complex manifolds which is induced by $\tl{f}$.
Then we have
$$\bfE \tl{f}^\an_{!!}\(\bfE j_{X^\an_\infty!!}(K)\)\in
{}^p\bfE_{\CC\mbox{\scriptsize -}c}^{\leq d}(\CC_{\tl{Y}^\an}^\sub)$$
by Proposition \ref{prop2.9} (1).
Since the functor $\bfE \tl{f}^\an_{!!}\colon\BEC(\CC_{\tl{X}^\an}^\sub)\to \BEC(\CC_{\tl{Y}^\an}^\sub)$
preserves the algebraic $\CC$-constructibility,
we have $$\bfE \tl{f}^\an_{!!}\(\bfE j_{X^\an_\infty!!}(K)\)\in
\BEC_{\CC\mbox{\scriptsize -}c}(\CC_{\tl{Y}}^\sub).$$
Hence by the fact that there exists an isomorphism of functors:
$$\bfE j_{Y^\an_\infty!!}\circ \bfE f^\an_{\infty !!}\simeq \bfE \tl{f}^\an_{!!}\circ \bfE j_{X^\an_\infty!!},$$
we have 
$$\bfE j_{Y^\an_\infty!!}\(\bfE f^\an_{\infty !!}(K)\)\simeq \bfE \tl{f}^\an_{!!}\(\bfE j_{X^\an_\infty!!}(K)\)\in
{}^p\bfE_{\CC\mbox{\scriptsize -}c}^{\leq d}(\CC_{\tl{Y}^\an}^\sub)\cap
\BEC_{\CC\mbox{\scriptsize -}c}(\CC_{\tl{Y}}^\sub)$$
and hence
$$\bfE j_{Y^\an_\infty!!}\(\bfE f^\an_{\infty !!}(K)\)\in
{}^p\bfE_{\CC\mbox{\scriptsize -}c}^{\leq d}(\CC_{\tl{Y}}^\sub)$$
by Lemma \ref{lem2.20} (1).
Therefore, we have $$\bfE f^\an_{\infty!!} K\in{}^p\bfE_{\CC\mbox{\scriptsize -}c}^{\leq d}(\CC_{Y_\infty}^\sub)$$
by Lemma \ref{lem2.19} (1).
\end{proof}

\begin{corollary}\label{cor2.22}
Let $Z$ be a locally closed smooth subvariety of $X$.
We denote by $i_{Z^\an_\infty}\colon Z^\an_\infty\to X^\an_\infty$
the morphism of bordered spaces induced by the natural embedding $Z\hookrightarrow X$.
\begin{itemize}
\item[\rm(1)]
For any $K\in{}^p\bfE_{\CC\mbox{\scriptsize -}c}^{\leq0}(\CC_{Z_\infty}^\sub)$
we have $\bfE i_{Z^\an_\infty!!} K\in{}^p\bfE_{\CC\mbox{\scriptsize -}c}^{\leq 0}(\CC_{X_\infty}^\sub)$.
\item[\rm(2)]
For any $K\in{}^p\bfE_{\CC\mbox{\scriptsize -}c}^{\geq0}(\CC_{Z_\infty}^\sub)$
we have $\bfE i_{Z^\an_\infty\ast} K\in{}^p\bfE_{\CC\mbox{\scriptsize -}c}^{\geq 0}(\CC_{X_\infty}^\sub)$.
\item[\rm(3)]
For any $L\in{}^p\bfE_{\CC\mbox{\scriptsize -}c}^{\leq0}(\CC_{X_\infty}^\sub)$
we have $\bfE i_{Z^\an_\infty}^{-1} L\in{}^p\bfE_{\CC\mbox{\scriptsize -}c}^{\leq 0}(\CC_{Z_\infty}^\sub)$.
\item[\rm(4)]
For any $L\in{}^p\bfE_{\CC\mbox{\scriptsize -}c}^{\geq0}(\CC_{X_\infty}^\sub)$
we have $\bfE i_{Z^\an_\infty}^! L\in{}^p\bfE_{\CC\mbox{\scriptsize -}c}^{\geq 0}(\CC_{Z_\infty}^\sub)$.
\end{itemize}

In particular, if $Z$ is open $($resp.\ closed$)$,
then the functor 
$$\bfE i_{Z^\an_\infty}^{-1}\simeq\bfE i_{Z^\an_\infty}^{!}\hspace{10pt}
(\mbox{resp.}\ \bfE i_{Z^\an_\infty!!}\simeq \bfE i_{Z^\an_\infty\ast})$$
is t-exact with respect to the perverse t-structures.
\end{corollary}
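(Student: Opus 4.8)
The plan is to obtain this corollary as a direct specialization of the preceding proposition to the case where $f$ is the locally closed embedding $i_Z\colon Z\hookrightarrow X$. First I would note that $i_Z$ is a morphism of smooth algebraic varieties ($Z$ is smooth by hypothesis, $X$ is smooth throughout this section), and that for every $x\in X$ the fibre $i_Z^{-1}(x)$ is either a single point (if $x\in Z$) or empty (if $x\notin Z$); in particular $\dim i_Z^{-1}(x)\leq 0$. Hence the hypothesis of the preceding proposition is satisfied with $d=0$, taking its source to be $Z$, its target to be $X$, and its morphism to be $i_Z$, so that the induced bordered-space morphism $i_{Z^\an_\infty}\colon Z^\an_\infty\to X^\an_\infty$ coincides with $(i_Z)^\an_\infty$.

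With that substitution, the four assertions of the corollary are exactly assertions (1)--(4) of the preceding proposition read off with $d=0$: assertion (1) says that $\bfE i_{Z^\an_\infty!!}$ carries ${}^p\bfE^{\leq0}_{\CC\mbox{\scriptsize -}c}(\CC_{Z_\infty}^\sub)$ into ${}^p\bfE^{\leq0}_{\CC\mbox{\scriptsize -}c}(\CC_{X_\infty}^\sub)$, assertion (2) says that $\bfE i_{Z^\an_\infty\ast}$ carries ${}^p\bfE^{\geq0}_{\CC\mbox{\scriptsize -}c}(\CC_{Z_\infty}^\sub)$ into ${}^p\bfE^{\geq0}_{\CC\mbox{\scriptsize -}c}(\CC_{X_\infty}^\sub)$, and (3), (4) are the analogous statements for $\bfE i_{Z^\an_\infty}^{-1}$ and $\bfE i_{Z^\an_\infty}^!$. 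Note that the target memberships asserted by the proposition already include algebraic $\CC$-constructibility, so no separate verification is needed on that point.

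For the concluding ``in particular'' statement I would invoke the standard identifications of the operations for embeddings in the enhanced subanalytic setting: $\bfE j^{-1}\simeq\bfE j^!$ when $j$ is an open embedding, and $\bfE i_{!!}\simeq\bfE i_\ast$ when $i$ is a closed embedding (the enhanced analogues of the familiar facts; see \cite{Ito24a}). If $Z$ is open, then combining (3) and (4) with $\bfE i_{Z^\an_\infty}^{-1}\simeq\bfE i_{Z^\an_\infty}^!$ shows that this single functor preserves both ${}^p\bfE^{\leq0}_{\CC\mbox{\scriptsize -}c}$ and ${}^p\bfE^{\geq0}_{\CC\mbox{\scriptsize -}c}$, hence is t-exact; if $Z$ is closed, then combining (1) and (2) with $\bfE i_{Z^\an_\infty!!}\simeq\bfE i_{Z^\an_\infty\ast}$ shows that this single functor preserves both halves of the perverse t-structure, hence is t-exact.

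I do not anticipate a genuine obstacle here: the argument is essentially a direct reading of the preceding proposition. The only points requiring a moment's attention are the verification of the fibre-dimension bound for an arbitrary locally closed immersion (immediate, since the fibres are points or empty) and the choice of the correct references for the two functor isomorphisms used in the open and closed cases; both are routine.
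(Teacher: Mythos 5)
Your proposal is correct and matches the argument the paper intends: the corollary is exactly the preceding proposition applied to $f=i_Z$ with $d=0$ (fibres are points or empty), and the ``in particular'' part follows by combining the four assertions with the standard isomorphisms $\bfE i^{-1}\simeq\bfE i^{!}$ for open embeddings and $\bfE i_{!!}\simeq\bfE i_{\ast}$ for closed embeddings. The paper states the corollary without a separate proof, so your specialization argument is precisely the omitted one.
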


\begin{remark}\label{rem2.23}
Let $Z$ be a locally closed smooth subvariety of $X$
and assume that the natural embedding $i_Z\colon Z\hookrightarrow X$ is affine.
Then, functors $\bfE i_{Z^\an_\infty\ast}$ and $\bfE i_{Z^\an_\infty!!}$ induce exact functors
form $\Perv(\CC_{Z_\infty}^\sub)$ to $\Perv(\CC_{X_\infty}^\sub)$:
\[\bfE i_{Z^\an_\infty\ast}, \bfE i_{Z^\an_\infty!!} \colon \Perv(\CC_{Z_\infty}^\sub)\to\Perv(\CC_{X_\infty}^\sub).\]
These follow from  Theorem \ref{main-thm-3}, \cite[Prop.\:3.16 (2)(ii), (3)(i), (4)(ii)]{Ito24a},
 and \cite[Rem.\:3.28]{Ito21}.
\end{remark}

\begin{notation}
For a functor $\mathscr{F}\colon\BEC_{\CC\mbox{\scriptsize -}c}(\I\CC_{X_\infty})\to\BEC_{\CC\mbox{\scriptsize -}c}(\I\CC_{Y_\infty})$,
we set $${}^p\!\mathscr{F} := {}^p\SH^0\circ \mathscr{F} \colon\Perv(\I\CC_{X_\infty})\to\Perv(\I\CC_{Y_\infty}),$$
where ${}^p\SH^0$ is the $0$-th cohomology functor with respect to the perverse t-structure.
For example, for a morphism $f\colon X\to Y$ of smooth algebraic varieties,
we set
\begin{align*}
{}^p\bfE (f^\an_{\infty})^{-1} &:= {}^p\SH^0\circ\bfE (f^\an_{\infty})^{-1}\colon
\Perv(\CC_{Y_\infty}^\sub)\to\Perv(\CC_{X_\infty}^\sub),\\
{}^p\bfE (f^\an_{\infty})^{!} &:= {}^p\SH^0\circ\bfE (f^\an_{\infty})^{!}\colon
\Perv(\CC_{Y_\infty}^\sub)\to\Perv(\CC_{X_\infty}^\sub),\\
{}^p\bfE f^\an_{\infty\ast}&:= {}^p\SH^0\circ\bfE f^\an_{\infty\ast}\colon
\Perv(\CC_{X_\infty}^\sub)\to\Perv(\CC_{Y_\infty}^\sub),\\
{}^p\bfE f^\an_{\infty!!}&:= {}^p\SH^0\circ\bfE f^\an_{\infty!!}\colon
\Perv(\CC_{X_\infty}^\sub)\to\Perv(\CC_{Y_\infty}^\sub).
\end{align*} 
\end{notation}

In this paper, for an object $K\in\BEC(\CC_{X_\infty^\an}^\sub)$,
let us define the support of $K$
by the complement of the union of open subsets $U^\an$ of $X^\an$ such that
$K|_{U_\infty^\an} := \bfE i_{U^\an_\infty}^{-1}K\simeq0$
and denote it by $\supp(K)$.
Namely, we set
$$\supp(K) := \left(\bigcup_{U^\an\underset{\mbox{\tiny open}}{\subset} X^\an,\ K|_{U^\an_\infty} = 0}U^\an\right)^c \hspace{7pt}
\subset X^\an.$$
Note that we have 
\[\bigcup_{U^\an\underset{\mbox{\tiny open}}{\subset} X^\an,\ K|_{U^\an_\infty} = 0}U^\an
 \hspace{7pt} = \hspace{7pt}
 \bigcup_{V^\an\underset{\mbox{\tiny open}}{\subset} X^\an,\ K|_{V^\an} = 0}V^\an.\]
Moreover, for a locally closed smooth subvariety $Z$ of $X$, we set 
$$\Perv_{Z}(\CC_{X_\infty}^\sub) :=  \{K\in\Perv(\CC_{X_\infty}^\sub)\ |\ \supp(K)\subset Z^\an\}.$$

\begin{proposition}\label{prop2.25}
Let $Z$ be a \underline{closed smooth subvariety} of $X$.
Then we have an equivalence of abelian categories$:$
\[\xymatrix@C=75pt{
\Perv_{Z}(\CC_{X_\infty}^\sub)\ar@<0.7ex>@{->}[r]^-{{}^p\bfE i_{Z^\an_\infty}^{-1}}\ar@{}[r]|-{\sim}
&
\Perv(\CC_{Z_\infty}^\sub)
\ar@<0.7ex>@{->}[l]^-{\bfE i_{Z^\an_\infty!!}}.
}\]
Furthermore for any $K\in\Perv_Z(\CC_{X_\infty}^\sub)$
there exists an isomorphism in $\Perv(\CC_{Z_\infty}^\sub)$
$${}^p\bfE i_{Z^\an_\infty}^{-1}K\simeq{}^p\bfE i_{Z^\an_\infty}^{!}K.$$
\end{proposition}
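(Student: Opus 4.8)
The plan is to identify $\Perv_Z(\CC_{X_\infty}^\sub)$ with the full subcategory of perverse enhanced subanalytic sheaves that restrict to zero on the open complement of $Z$, and then run the standard recollement argument --- the enhanced-subanalytic incarnation of Kashiwara's equivalence. Put $U:=X\setminus Z$ and let $i_{U^\an_\infty}\colon U^\an_\infty\to X^\an_\infty$ be the (open) immersion complementary to $i_{Z^\an_\infty}$. First I would record that, for $K\in\BEC(\CC_{X_\infty^\an}^\sub)$, one has $\supp(K)\subset Z^\an$ if and only if $\bfE i_{U^\an_\infty}^{-1}K\simeq0$: this is immediate from the definition of $\supp$ together with the fact that the restriction of $K$ to the complement of $\supp(K)$ (the union of the opens on which $K$ vanishes) is again zero. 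Because $i_{U^\an_\infty}$ is an open immersion we also have $\bfE i_{U^\an_\infty}^{-1}\simeq\bfE i_{U^\an_\infty}^{!}$, so both $\bfE i_{U^\an_\infty}^{-1}K$ and $\bfE i_{U^\an_\infty}^{!}K$ vanish whenever $K\in\Perv_Z(\CC_{X_\infty}^\sub)$.

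Next I would feed this vanishing into the two gluing triangles for enhanced subanalytic sheaves attached to the partition $X=Z\sqcup U$,
\[
\bfE i_{U^\an_\infty!!}\bfE i_{U^\an_\infty}^{-1}K\to K\to \bfE i_{Z^\an_\infty!!}\bfE i_{Z^\an_\infty}^{-1}K\xrightarrow{+1},\qquad
\bfE i_{Z^\an_\infty!!}\bfE i_{Z^\an_\infty}^{!}K\to K\to \bfE i_{U^\an_\infty\ast}\bfE i_{U^\an_\infty}^{!}K\xrightarrow{+1},
\]
in which I have used the isomorphism $\bfE i_{Z^\an_\infty\ast}\simeq\bfE i_{Z^\an_\infty!!}$ of Corollary \ref{cor2.22} (valid since $Z$ is closed). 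For $K\in\Perv_Z(\CC_{X_\infty}^\sub)$ the outer terms of both triangles vanish, hence $K\simeq\bfE i_{Z^\an_\infty!!}\bfE i_{Z^\an_\infty}^{-1}K$ and $\bfE i_{Z^\an_\infty!!}\bfE i_{Z^\an_\infty}^{!}K\simeq K$ in $\BEC(\CC_{X_\infty^\an}^\sub)$; as $\bfE i_{Z^\an_\infty!!}$ is fully faithful for the closed immersion $i_{Z^\an_\infty}$, this already yields $\bfE i_{Z^\an_\infty}^{-1}K\simeq\bfE i_{Z^\an_\infty}^{!}K$ in $\BEC(\CC_{Z_\infty^\an}^\sub)$. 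Moreover, by Corollary \ref{cor2.22}\,(3) the left side lies in ${}^p\bfE^{\leq0}_{\CC\mbox{\scriptsize -}c}(\CC_{Z_\infty}^\sub)$ and by Corollary \ref{cor2.22}\,(4) the right side lies in ${}^p\bfE^{\geq0}_{\CC\mbox{\scriptsize -}c}(\CC_{Z_\infty}^\sub)$, so both equal one and the same object of $\Perv(\CC_{Z_\infty}^\sub)$. In particular ${}^p\bfE i_{Z^\an_\infty}^{-1}K=\bfE i_{Z^\an_\infty}^{-1}K$ --- no perverse truncation is needed --- and ${}^p\bfE i_{Z^\an_\infty}^{-1}K\simeq{}^p\bfE i_{Z^\an_\infty}^{!}K$, which is the last assertion of the proposition.

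It then remains to assemble the equivalence. The functor $\bfE i_{Z^\an_\infty!!}$ sends $\Perv(\CC_{Z_\infty}^\sub)$ into $\Perv(\CC_{X_\infty}^\sub)$ by Remark \ref{rem2.23} (a closed immersion is affine), and into $\Perv_Z(\CC_{X_\infty}^\sub)$ since $\bfE i_{U^\an_\infty}^{-1}\bfE i_{Z^\an_\infty!!}\simeq0$. Using the adjunction isomorphism $\bfE i_{Z^\an_\infty}^{-1}\bfE i_{Z^\an_\infty!!}\simeq\id$ for a closed immersion and the fact that $\bfE i_{Z^\an_\infty!!}L$ is already perverse, one gets ${}^p\bfE i_{Z^\an_\infty}^{-1}\circ\bfE i_{Z^\an_\infty!!}\simeq\id_{\Perv(\CC_{Z_\infty}^\sub)}$; and the isomorphism $K\simeq\bfE i_{Z^\an_\infty!!}\bfE i_{Z^\an_\infty}^{-1}K$ obtained above, combined with ${}^p\bfE i_{Z^\an_\infty}^{-1}K=\bfE i_{Z^\an_\infty}^{-1}K$, gives $\bfE i_{Z^\an_\infty!!}\circ{}^p\bfE i_{Z^\an_\infty}^{-1}\simeq\id_{\Perv_Z(\CC_{X_\infty}^\sub)}$. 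I expect the main obstacle to be making available, in the enhanced-subanalytic setting, the full recollement package used above --- the two gluing triangles, $\bfE j^{-1}\simeq\bfE j^{!}$ for an open immersion, $\bfE i^{-1}\bfE i_{!!}\simeq\id$ and $\bfE i_{U^\an_\infty}^{-1}\bfE i_{Z^\an_\infty!!}\simeq0$ --- which I would extract from the six-operation formalism for enhanced subanalytic sheaves developed in \cite{Ito24a} (cf.\ the compatibilities in \cite[Prop.\:3.16]{Ito24a}). Alternatively, the whole proposition can be deduced from its enhanced ind-sheaf counterpart by transporting along the t-exact equivalence $I_{X_\infty^\an}^\rmE$ of Theorem \ref{main-thm-3}, which intertwines $\bfE i_{Z^\an_\infty!!}$, $\bfE i_{Z^\an_\infty}^{-1}$ and $\bfE i_{Z^\an_\infty}^{!}$ with their ind-sheaf analogues and preserves supports.
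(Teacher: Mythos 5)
Your proposal is correct in substance, but your main argument takes a genuinely different route from the paper, while the one-sentence ``alternative'' at the end of your write-up is in fact the paper's entire proof: the paper deduces the proposition from its enhanced ind-sheaf counterpart \cite[Prop.\:3.30]{Ito21} by transporting along the equivalence $I^\rmE_{X^\an_\infty}$, which is t-exact for the perverse t-structures (Theorem \ref{main-thm-3}, via Proposition \ref{prop2.12}) and commutes with $\bfE i^{-1}_{Z^\an_\infty}$, $\bfE i^!_{Z^\an_\infty}$, $\bfE i_{Z^\an_\infty!!}$ by \cite[Prop.\:3.16 (2)]{Ito24a}, hence also identifies $\Perv_Z(\CC_{X_\infty}^\sub)$ with $\Perv_Z(\I\CC_{X_\infty})$. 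Your primary argument instead re-runs the Kashiwara-equivalence/recollement proof directly in the enhanced subanalytic setting: vanishing on the open complement, the two gluing triangles, $\bfE i_{Z^\an_\infty}^{-1}\bfE i_{Z^\an_\infty!!}\simeq\id$, full faithfulness of $\bfE i_{Z^\an_\infty!!}$, and the t-exactness bounds of Corollary \ref{cor2.22} to see that no perverse truncation occurs. This buys a self-contained proof that does not lean on \cite[Prop.\:3.30]{Ito21}, and it makes transparent why ${}^p\bfE i_{Z^\an_\infty}^{-1}K\simeq{}^p\bfE i_{Z^\an_\infty}^{!}K$; but it rests on a ``recollement package'' for enhanced subanalytic sheaves on bordered spaces (the two distinguished triangles, $\bfE i_{U^\an_\infty}^{-1}\bfE i_{Z^\an_\infty!!}\simeq0$, full faithfulness of $\bfE i_{Z^\an_\infty!!}$) that is not established in the present paper and which you only gesture at extracting from \cite{Ito24a}; likewise your first step, that $\supp(K)\subset Z^\an$ forces $\bfE i_{U^\an_\infty}^{-1}K\simeq0$, uses the local nature of vanishing (vanishing on each member of an open cover implies vanishing on the union), which you assert without proof. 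Both points are standard and can themselves be obtained by transporting along $I^\rmE$ --- at which point one may as well transport the whole statement, which is exactly what the paper does --- so these are gaps of bookkeeping rather than of mathematics, and your concluding alternative closes them.
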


\begin{proof}
This proposition follows from Theorem \ref{main-thm-3}, \cite[Prop.\:3.16 (2)]{Ito24a},
 and \cite[Prop.\:3.30]{Ito21}
\end{proof}

\subsection{Minimal Extensions}\label{sec-minimal}
In this subsection we shall consider simple enhanced perverse subanalytic sheaves on a smooth algebraic variety,
and a counterpart of minimal extensions of algebraic holonomic $\D$-modules.
See e.g., \cite[\S 3.4]{HTT} for the details of 
the definition and properties of minimal extensions of algebraic holonomic $\D$-modules.

Let $X$ be a smooth algebraic variety over $\CC$.
We shall define a notion of simplicity for algebraic enhanced perverse subanalytic sheaves as below.

\begin{definition}\label{main-def-3}
A non-zero algebraic enhanced perverse subanalytic sheaf $K\in\Perv(\CC_{X_\infty}^\sub)$ is called simple
if it contains no subobjects in $\Perv(\CC_{X_\infty}^\sub)$ other than $K$ or $0$.
\end{definition}

Let us recall that
a non-zero algebraic enhanced perverse ind-sheaf $K\in\Perv(\I\CC_{X_\infty})$ is called simple
if it contains no subobjects in $\Perv(\I\CC_{X_\infty})$ other than $K$ or $0$.
See \cite[\S 3.4]{Ito21} for the details.

\begin{proposition}\label{prop2.24}
We have the following:
\begin{itemize}
\item[{\rm (1)}]
For any $K\in\Perv(\CC_{X_\infty}^\sub)$,
$K$ is simple if and only if $I_{X_\infty^\an}^\rmE(K)\in\Perv(\I\CC_{X_\infty})$ is simple.

\item[{\rm (2)}]
For any $K\in\Perv(\I\CC_{X_\infty})$,
$K$ is simple if and only if $J_{X_\infty^\an}^\rmE(K)\in\Perv(\CC_{X_\infty}^\sub)$ is simple.
\end{itemize}
\end{proposition}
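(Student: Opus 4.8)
The plan is to reduce everything to the already-established correspondence for enhanced perverse ind-sheaves (Theorem \ref{main-thm-3}) via the equivalence $I_{X_\infty^\an}^{\rmE}\colon \Perv(\CC_{X_\infty}^\sub)\simto \Perv(\I\CC_{X_\infty})$ and its quasi-inverse $J_{X_\infty^\an}^{\rmE}$. The key point is purely categorical: if $F\colon \mathcal{A}\simto\mathcal{B}$ is an equivalence of abelian categories, then $A\in\mathcal{A}$ is simple if and only if $F(A)\in\mathcal{B}$ is simple, because $F$ induces an order-preserving bijection between the subobject lattices of $A$ and of $F(A)$ (it sends monomorphisms to monomorphisms and reflects them, and it is essentially surjective so every subobject of $F(A)$ comes from one of $A$). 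Since Theorem \ref{main-thm-3} already tells us that $I_{X_\infty^\an}^{\rmE}$ restricts to an equivalence of the hearts $\Perv(\CC_{X_\infty}^\sub)$ and $\Perv(\I\CC_{X_\infty})$, this is all we need.

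First I would state the categorical lemma above, or simply invoke it, applied to $\mathcal{A} = \Perv(\CC_{X_\infty}^\sub)$, $\mathcal{B} = \Perv(\I\CC_{X_\infty})$, $F = I_{X_\infty^\an}^{\rmE}$. For part (1), given $K\in\Perv(\CC_{X_\infty}^\sub)$, the subobjects of $K$ in $\Perv(\CC_{X_\infty}^\sub)$ correspond bijectively, via $I_{X_\infty^\an}^{\rmE}$, to the subobjects of $I_{X_\infty^\an}^{\rmE}(K)$ in $\Perv(\I\CC_{X_\infty})$, and under this bijection $0\mapsto 0$ and $K\mapsto I_{X_\infty^\an}^{\rmE}(K)$; moreover $K\neq 0$ iff $I_{X_\infty^\an}^{\rmE}(K)\neq 0$ since $I_{X_\infty^\an}^{\rmE}$ is an equivalence. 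Hence $K$ has no subobjects other than $0$ and $K$ exactly when $I_{X_\infty^\an}^{\rmE}(K)$ has none other than $0$ and $I_{X_\infty^\an}^{\rmE}(K)$, which is the assertion. For part (2), given $K\in\Perv(\I\CC_{X_\infty})$, we have $J_{X_\infty^\an}^{\rmE}(K)\in\Perv(\CC_{X_\infty}^\sub)$ and $I_{X_\infty^\an}^{\rmE}(J_{X_\infty^\an}^{\rmE}(K))\simeq K$ since $J_{X_\infty^\an}^{\rmE}$ is a quasi-inverse of $I_{X_\infty^\an}^{\rmE}$; applying part (1) to $J_{X_\infty^\an}^{\rmE}(K)$ gives that $J_{X_\infty^\an}^{\rmE}(K)$ is simple iff $I_{X_\infty^\an}^{\rmE}(J_{X_\infty^\an}^{\rmE}(K))\simeq K$ is simple.

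The only subtlety worth flagging is that one must know $J_{X_\infty^\an}^{\rmE}$ really does send $\Perv(\I\CC_{X_\infty})$ into $\Perv(\CC_{X_\infty}^\sub)$ and is quasi-inverse there to $I_{X_\infty^\an}^{\rmE}$; but this is exactly the content of Theorem \ref{main-thm-3} together with the equivalence $I_{X_\infty^\an}^{\rmE}\colon\BEC_{\CC\mbox{\scriptsize -}c}(\CC_{X_\infty}^\sub)\simto\BEC_{\CC\mbox{\scriptsize -}c}(\I\CC_{X_\infty})$ from \cite[Thm.\:5.10]{Ito24b}, whose quasi-inverse is $J_{X_\infty^\an}^{\rmE}$ (see \cite{Ito24a}). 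So there is essentially no obstacle; the proof is a one-line appeal to the fact that an equivalence of abelian categories preserves and reflects simplicity, applied to the equivalence of hearts already in hand. If desired, one can also remark that the whole statement is the ``$\Perv$'' analogue of Proposition \ref{prop2.2}/Proposition \ref{prop2.12}, now at the level of simple objects rather than t-structure truncations.
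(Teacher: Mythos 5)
Your proposal is correct and follows essentially the same route as the paper: the paper also deduces (2) from (1) via the equivalence of hearts in Theorem \ref{main-thm-3}, and proves (1) by transporting subobjects back and forth with $I_{X_\infty^\an}^{\rmE}$ and $J_{X_\infty^\an}^{\rmE}$, which is exactly the unwound form of your categorical observation that an equivalence of abelian categories preserves and reflects simplicity.
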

\begin{proof}
The assertion (2) follows from the assertion (1) and Theorem \ref{main-thm-3}.
We shall prove the assertion (1).

Let $K\in\Perv(\CC_{X_\infty}^\sub)$ be a simple algebraic enhanced subanalytic sheaf
and $L\in\Perv(\I\CC_{X_\infty})$ a subobject of $I_{X_\infty^\an}^\rmE(K)$
which is not isomorphic to $I_{X_\infty^\an}^\rmE(K)$.
Then $J_{X_\infty^\an}^\rmE(L)$ is a subobject of $J_{X_\infty^\an}^\rmE(I_{X_\infty^\an}^\rmE(K))\simeq K$
in $\Perv(\CC_{X_\infty}^\sub)$ 
which is not isomorphic to $K$
by Theorem \ref{main-thm-3}.
Since $K$ is simple, we have $J_{X_\infty^\an}^\rmE(L)\simeq 0$
and hence $L\simeq 0$ by Theorem \ref{main-thm-3}.
Therefore, $I_{X_\infty^\an}^\rmE(K)$ is simple.

Let $K\in\Perv(\CC_{X_\infty}^\sub)$ and assume that  $I_{X_\infty^\an}^\rmE(K)$ is simple.
We take a subobject $L$ of $K$ which is not isomorphic to $K$.
Since $I_{X_\infty^\an}^\rmE(L)$ is a subobject of $I_{X_\infty^\an}^\rmE(K)$
which is not isomorphic to $I_{X_\infty^\an}^\rmE(K)$ by Theorem \ref{main-thm-3}
and $I_{X_\infty^\an}^\rmE(K)$ is simple,
we have $I_{X_\infty^\an}^\rmE(L)\simeq 0$
and hence $L\simeq 0$ by by Theorem \ref{main-thm-3}.
Therefore, $K$ is simple.

The proof is completed.
\end{proof}

Let us recall that
a non-zero perverse sheaf $\F\in\Perv(\CC_X)$ is called simple
if it contains no subobjects in $\Perv(\CC_X)$ other than $\F$ or $0$.
See \cite{BBD} (also \cite[\S 8.2.2]{HTT}) for the details.

\begin{proposition}
For any simple algebraic perverse sheaf $\SF\in\Perv(\CC_X)$,
the algebraic enhanced perverse subanalytic sheaf $e_{X^\an_\infty}^\sub(\rho_{X_\infty^\an\ast}(\SF)) \in\Perv(\CC_{X_\infty}^\sub)$
is also simple.
\end{proposition}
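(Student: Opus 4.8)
The plan is to reduce the statement to a fact about holonomic $\D_X$-modules: all the relevant functors are (anti-)equivalences of abelian categories on the hearts in question, and an anti-equivalence of abelian categories preserves simple objects, so it suffices to track $\SF$ through the chain of equivalences back to a $\D_X$-module and forward again.

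First I would use the algebraic regular Riemann--Hilbert correspondence $\Sol_X(\cdot)[d_X]\colon \Modrh(\D_X)^{\op}\simto\Perv(\CC_X)$ to choose a regular holonomic $\D_X$-module $\M\in\Modrh(\D_X)$ with $\Sol_X(\M)[d_X]\simeq\SF$. An anti-equivalence of abelian categories preserves simple objects (simplicity may be phrased symmetrically as having no nonzero proper subobject, equivalently no nonzero proper quotient), so $\M$ is a simple object of $\Modrh(\D_X)$. Next I would invoke the standard fact that $\Modrh(\D_X)$ is a Serre subcategory of $\Modhol(\D_X)$; in particular it is a full abelian subcategory stable under taking $\D_X$-submodules, since any holonomic submodule of a regular holonomic module is again regular holonomic. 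Hence $\M$, having no nonzero proper submodule inside $\Modrh(\D_X)$, has no nonzero proper submodule inside $\Modhol(\D_X)$, i.e.\ $\M$ is simple as a holonomic $\D_X$-module.

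Then I would apply Theorem \ref{main-thm-4}: the functor $\Sol_{X_\infty}^{\rmE, \sub}(\cdot)[d_X]\colon\Modhol(\D_X)^{\op}\simto\Perv(\CC_{X_\infty}^\sub)$ is an anti-equivalence of abelian categories, hence $\Sol_{X_\infty}^{\rmE, \sub}(\M)[d_X]$ is a simple algebraic enhanced perverse subanalytic sheaf. Finally, the commutativity of the diagram in the preceding Proposition gives, for $\M\in\Modrh(\D_X)$,
\[\Sol_{X_\infty}^{\rmE, \sub}(\M)[d_X]\ \simeq\ e_{X_\infty^\an}^\sub\bigl(\rho_{X_\infty^\an\ast}(\Sol_X(\M)[d_X])\bigr)\ \simeq\ e_{X_\infty^\an}^\sub\bigl(\rho_{X_\infty^\an\ast}(\SF)\bigr),\]
so the object on the right is simple, which is the claim. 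I expect no genuine obstacle here: the only step that is not purely formal is the passage from simplicity in $\Modrh(\D_X)$ to simplicity in $\Modhol(\D_X)$, which rests on the stability of regular holonomicity under subobjects. Alternatively, one could avoid $\D$-modules entirely and argue on the sheaf side, combining Proposition \ref{prop2.24} with the corresponding statement for enhanced ind-sheaves (that $e_{X_\infty^\an}\circ\rho_{X_\infty^\an\ast}$ of a simple algebraic perverse sheaf is a simple algebraic enhanced perverse ind-sheaf, \cite{Ito21}) and the commutative diagram above; the two routes are equivalent.
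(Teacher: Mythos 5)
Your argument is correct, but it follows a genuinely different route from the paper's. The paper stays entirely on the topological side: it invokes Proposition \ref{prop2.24}\,(1) together with the compatibility isomorphism $I^\rmE_{X^\an_\infty}\circ e^\sub_{X^\an_\infty}\circ\rho_{X^\an_\infty\ast}\simeq e_{X^\an_\infty}\circ\iota_{X^\an_\infty}$ (from \cite[Props.\:3.4\,(4)(i), 3.21]{Ito24a}) to reduce the claim to the already-known statement for enhanced ind-sheaves \cite[Prop.\:3.35]{Ito21} --- i.e.\ exactly the ``alternative'' route you sketch in your last sentence. Your main argument instead passes through $\D$-modules: you lift $\SF$ to a simple $\M\in\Modrh(\D_X)$ via the algebraic regular Riemann--Hilbert correspondence, upgrade simplicity from $\Modrh(\D_X)$ to $\Modhol(\D_X)$ using that regular holonomicity is stable under submodules, and then push forward through the anti-equivalence of Theorem \ref{main-thm-4} and the commutative diagram relating $\Sol_{X_\infty}^{\rmE,\sub}(\cdot)[d_X]$ to $e_{X_\infty^\an}^\sub\circ\rho_{X_\infty^\an\ast}\circ\Sol_X(\cdot)[d_X]$. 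Both routes are sound; yours has the merit of using only results stated in this paper plus one classical fact about regular holonomic modules (Serre-subcategory stability, which is not quoted here and should be cited, e.g.\ from \cite{HTT} or \cite{Bor}), whereas the paper's proof is a purely formal transfer through $I^\rmE$ that avoids $\D$-module theory altogether and parallels how the neighbouring statements in Section 2.3 are proved.
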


\begin{proof}
This follows from Proposition \ref{prop2.24} (1), \cite[Props.\:3.4 (4)(i), 3.21]{Ito24a}
and \cite[Prop.\:3.35]{Ito21}.
\end{proof}

We denote by $\ZEC(\CC_{X_\infty^\an}^\sub)$ the heart
with respect to the standard t-structure on $\BEC(\CC_{X_\infty^\an}^\sub)$.
See \cite[Prop.\:3.12]{Ito24a} for the details.
In this paper, 
we shall say that $K\in\ZEC(\CC_{X_\infty^\an}^\sub)$ is an enhanced local system of subanalytic type on $X_\infty$
if for any $x\in X$ there exist an open neighborhood $U\subset X$ of $x$ and a non-negative integer $k$
such that $K|_{U_\infty^\an}\simeq(\CC_{U^\an_\infty}^{\rmE,\sub})^{\oplus k}.$
See \cite[The paragraph before Lem.\:3.17]{Ito24a} for the definition of $\CC_{U^\an_\infty}^{\rmE,\sub}$.
Note that for any enhanced local system $K$ of subanalytic type on $X_\infty$,
there exists an integrable connection $\SL$ on $X$
such that $K\simeq \Sol_{X_\infty}^{\rmE,\sub}(\SL)\in\BEC_{\CC\mbox{\scriptsize -}c}(\CC_{X_\infty}^\sub)$.
In particular $K[d_X]$ is an algebraic enhanced perverse subanalytic sheaf on $X_\infty$. 
Note also that  for any $K\in\ZEC(\CC_{X_\infty^\an}^\sub)$,
$K$ is an enhanced local system of subanalytic type
if and only if $I_{X_\infty^\an}^\rmE(K)$ is an enhanced local system which is defined in
\cite[The paragraph before Prop.\:3.36]{Ito21}.

\begin{proposition}\label{main-thm-5}
\begin{itemize}
\item[\rm(1)]
Let $Z$ be a locally closed smooth connected subvariety of $X$
and assume that the natural embedding $i_Z\colon Z\hookrightarrow X$ is affine.
Then for any simple algebraic enhanced perverse subanalytic sheaf $K$ on $X_\infty$,
the image of the canonical morphism $\bfE i_{Z^\an_\infty!!}K\to\bfE i_{Z^\an_\infty\ast}K$ is also simple,
and it is characterized as the unique simple submodule $($resp.\ unique simple quotient module$)$
of $\bfE i_{Z^\an_\infty\ast}K$ $($resp.\ $\bfE i_{Z^\an_\infty!!}K)$.

\item[\rm(2)]
For any simple algebraic enhanced perverse subanalytic sheaf $K$ on $X_\infty$,
there exist a locally closed smooth connected subvariety $Z$ of $X$ whose natural embedding is affine
and a simple enhanced local system $L$ of subanalytic type on $Z_\infty$ such that
$$K\simeq\Image\big(\bfE i_{Z^\an_\infty!!}L[d_Z]\to\bfE i_{Z^\an_\infty\ast}L[d_Z]\big).$$

\item[\rm(3)]
Let $(Z, L)$ be as in $(1)$ and $(Z', L')$ be another such pair. 
Then we have
$$\Image\big(\bfE i_{Z^\an_\infty!!}L[d_Z]\to\bfE i_{Z^\an_\infty\ast}L[d_Z]\big)\simeq
\Image\big(\bfE i_{Z^\an_\infty!!}L'[d_{Z'}]\to\bfE i_{Z^\an_\infty\ast}L'[d_{Z'}]\big)$$
if and only if
$\var{Z} = \var{Z'}$ and there exists an open dense subset $U$ of $Z\cap Z'$ such that
$L|_{U^\an_\infty}\simeq L'|_{U^\an_\infty}$.
Here, the symbol $\overline{(\cdot)}$ is the closure in $X$.
\end{itemize}
\end{proposition}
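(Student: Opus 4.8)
The plan is to deduce all three assertions from their counterparts for enhanced perverse ind-sheaves established in \cite[\S 3.4]{Ito21}, by transporting everything along the equivalence of abelian categories $I_{X_\infty^\an}^\rmE\colon\Perv(\CC_{X_\infty}^\sub)\simto\Perv(\I\CC_{X_\infty})$ of Theorem \ref{main-thm-3} and its quasi-inverse $J_{X_\infty^\an}^\rmE$. First I would assemble the compatibilities used throughout: being an exact equivalence, $I_{X_\infty^\an}^\rmE$ commutes with kernels, cokernels and images of morphisms and induces a bijection of subobject lattices; by Proposition \ref{prop2.24} an object $K$ is simple if and only if $I_{X_\infty^\an}^\rmE(K)$ is simple; by \cite[Prop.\:3.16, Lem.\:3.19]{Ito24a} the functor $I^\rmE$ commutes, up to canonical isomorphism and compatibly with shifts and with the adjunctions $(\bfE i_{Z^\an_\infty}^{-1},\bfE i_{Z^\an_\infty\ast})$ and $(\bfE i_{Z^\an_\infty!!},\bfE i_{Z^\an_\infty}^!)$, with $\bfE i_{Z^\an_\infty!!},\bfE i_{Z^\an_\infty\ast},\bfE i_{Z^\an_\infty}^{-1},\bfE i_{Z^\an_\infty}^!$; when $i_Z$ is affine the functors $\bfE i_{Z^\an_\infty!!}$ and $\bfE i_{Z^\an_\infty\ast}$ carry $\Perv(\CC_{Z_\infty}^\sub)$ into $\Perv(\CC_{X_\infty}^\sub)$ (Remark \ref{rem2.23}), so the images occurring in the statement are formed inside an abelian category; and an object of $\ZEC(\CC_{X_\infty^\an}^\sub)$ is an enhanced local system of subanalytic type if and only if its image under $I^\rmE$ is an enhanced local system in the sense of \cite[\S 3.4]{Ito21}, simplicity being preserved by Proposition \ref{prop2.24}.

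For (1), take $K$ simple in $\Perv(\CC_{Z_\infty}^\sub)$; by Remark \ref{rem2.23} the canonical morphism $\bfE i_{Z^\an_\infty!!}K\to\bfE i_{Z^\an_\infty\ast}K$ is a morphism of $\Perv(\CC_{X_\infty}^\sub)$, and applying $I_{X_\infty^\an}^\rmE$ with the compatibilities above identifies it with the canonical morphism $\bfE i_{Z^\an_\infty!!}I_{Z_\infty^\an}^\rmE(K)\to\bfE i_{Z^\an_\infty\ast}I_{Z_\infty^\an}^\rmE(K)$ in $\Perv(\I\CC_{X_\infty})$. As $I_{Z_\infty^\an}^\rmE(K)$ is simple, \cite[\S 3.4]{Ito21} gives that the image is simple and is the unique simple sub- (resp.\ quotient-) object of $\bfE i_{Z^\an_\infty\ast}I_{Z_\infty^\an}^\rmE(K)$ (resp.\ $\bfE i_{Z^\an_\infty!!}I_{Z_\infty^\an}^\rmE(K)$); since $I_{X_\infty^\an}^\rmE$ sends images to images and subobject lattices bijectively, applying $J_{X_\infty^\an}^\rmE$ returns the assertion for $K$. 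For (2), $I_{X_\infty^\an}^\rmE(K)$ is a simple enhanced perverse ind-sheaf, so \cite[\S 3.4]{Ito21} provides a locally closed smooth connected $Z\subset X$ with affine embedding and a simple enhanced local system $M$ on $Z_\infty$ with $I_{X_\infty^\an}^\rmE(K)\simeq\Image\big(\bfE i_{Z^\an_\infty!!}M[d_Z]\to\bfE i_{Z^\an_\infty\ast}M[d_Z]\big)$; set $L:=J_{Z_\infty^\an}^\rmE(M)$, which is a simple enhanced local system of subanalytic type on $Z_\infty$ with $I_{Z_\infty^\an}^\rmE(L)\simeq M$, rewrite the right-hand side as $I_{X_\infty^\an}^\rmE\big(\Image(\bfE i_{Z^\an_\infty!!}L[d_Z]\to\bfE i_{Z^\an_\infty\ast}L[d_Z])\big)$ via the compatibilities, and apply $J_{X_\infty^\an}^\rmE$. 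For (3), compatibility with restriction gives $I_{Z_\infty^\an}^\rmE(L)|_{U^\an_\infty}\simeq I_{U_\infty^\an}^\rmE(L|_{U^\an_\infty})$ and likewise for $L'$, so by faithfulness of $I_{U_\infty^\an}^\rmE$ one has $L|_{U^\an_\infty}\simeq L'|_{U^\an_\infty}$ if and only if $I_{Z_\infty^\an}^\rmE(L)|_{U^\an_\infty}\simeq I_{{Z'}_\infty^\an}^\rmE(L')|_{U^\an_\infty}$; combining this with the transport through $I^\rmE$ of the two minimal extensions, their isomorphism in $\Perv(\CC_{X_\infty}^\sub)$ is equivalent to the isomorphism of the corresponding minimal extensions in $\Perv(\I\CC_{X_\infty})$, which by \cite[\S 3.4]{Ito21} holds precisely when $\var{Z}=\var{Z'}$ and there is an open dense $U\subset Z\cap Z'$ over which the corresponding enhanced local systems agree.

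The main obstacle I anticipate is the bookkeeping around the canonical morphism: one must verify that $I_{X_\infty^\an}^\rmE$ carries the natural transformation $\bfE i_{Z^\an_\infty!!}(-)\to\bfE i_{Z^\an_\infty\ast}(-)$, and its $0$-th perverse cohomology, to the canonical morphism on the ind-sheaf side. This rests on the compatibility of $I^\rmE$ with the units and counits of the adjunctions $(\bfE i^{-1},\bfE i_\ast)$ and $(\bfE i_{!!},\bfE i^!)$, which has to be extracted carefully from \cite{Ito24a}, together with the t-exactness furnished by Remark \ref{rem2.23} and Corollary \ref{cor2.22}, which ensures that ``the image of the canonical morphism'' is taken inside the heart and transported faithfully. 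Once this compatibility is in place, the three statements follow formally from their enhanced perverse ind-sheaf counterparts in \cite[\S 3.4]{Ito21}.
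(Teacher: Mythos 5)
Your proposal is correct and follows essentially the same route as the paper, whose proof is precisely the reduction you describe: transport along the equivalence $I_{X_\infty^\an}^\rmE$ of Theorem \ref{main-thm-3}, using Proposition \ref{prop2.24} for simplicity, the compatibility of $I^\rmE$ with $\bfE i_{Z^\an_\infty!!}$, $\bfE i_{Z^\an_\infty\ast}$ and restriction from \cite[Prop.\:3.16]{Ito24a}, and the enhanced perverse ind-sheaf counterpart \cite[Prop.\:3.36]{Ito21}. Your extra care about the canonical morphism and the identification of enhanced local systems of subanalytic type under $I^\rmE$ simply makes explicit what the paper leaves as a citation.
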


\begin{proof}
This proposition follows from  Proposition \ref{prop2.24}, \cite[Prop.\:3.16 (2)(ii), (3)(i), (4)(ii)]{Ito24a},
\cite[Prop.\:3.4]{Ito24b} and \cite[Prop.\:3.36]{Ito21}
\end{proof}

From now on, we shall consider the image of the canonical morphism
\[{}^p\bfE i_{Z^\an_\infty!!}K\to{}^p\bfE i_{Z^\an_\infty\ast}K\]
for a locally closed smooth subvariety $Z$ of $X$
(not necessarily the natural embedding $i_Z\colon Z\hookrightarrow X$ is affine)
and $K\in\Perv(\CC_{Z_\infty}^\sub)$.
Remark that the canonical morphism $${}^p\bfE i_{Z^\an_\infty!!}K\to{}^p\bfE i_{Z^\an_\infty\ast}K$$
is induced by the canonical morphism of functors
$\Perv(\CC_{Z_\infty}^\sub)\to\Perv(\CC_{X_\infty}^\sub)$
\[{}^p\bfE i_{Z^\an_\infty!!}\to{}^p\bfE i_{Z^\an_\infty\ast}\]
and it is an isomorphism if $Z$ is closed. 
In this paper,
we shall define minimal extensions of algebraic enhanced perverse subanalytic sheaves as follows.
\begin{definition}\label{main-def-4}
For any $K\in\Perv(\CC_{Z_\infty}^\sub)$,
we call the image of the canonical morphism $${}^p\bfE i_{Z^\an_\infty!!}K\to{}^p\bfE i_{Z^\an_\infty\ast}K$$
the minimal extension of $K$ along $Z$,
and denote it by ${}^p\bfE i_{Z^\an_\infty!!\ast}K$.
\end{definition}

Since the category $\Perv(\CC_{X_\infty}^\sub)$ is abelian,
the minimal extension ${}^p\bfE i_{Z^\an_\infty!!\ast}K$ of $K\in\Perv(\CC_{Z_\infty}^\sub)$ along $Z$ is also
an algebraic enhanced perverse subanalytic sheaf on $X_\infty$.
Moreover we have a functor
\[{}^p\bfE i_{Z^\an_\infty!!\ast}\colon\Perv(\CC_{Z_\infty}^\sub)\to\Perv(\CC_{X_\infty}^\sub).\]

\begin{remark}\label{rem2.31}
\begin{itemize}
\item[(1)]
If $Z$ is open then we have
$
\bfE i_{Z^\an_\infty}^{-1}\big({}^p\bfE i_{Z^\an_\infty!!\ast}K\big)\simeq K$
by the definition of minimal extensions along $Z$.
\item[(2)]
If $Z$ is closed then we have ${}^p\bfE i_{Z^\an_\infty!!\ast}K
\simeq\bfE i_{Z^\an_\infty\ast}K\simeq\bfE i_{Z^\an_\infty!!}K\in\Perv_Z(\CC_{X_\infty}^\sub)$
by Corollary \ref{cor2.22} and Proposition \ref{prop2.25}.
Namely, in the case when $Z$ is closed,
minimal extensions along $Z$ can be characterized by Proposition \ref{prop2.25}.

\item[(3)]
If the natural embedding $i_Z\colon Z\hookrightarrow X$ is affine,
then we have $${}^p\bfE i_{Z^\an_\infty!!\ast}(\cdot) \simeq
 \Image\big(\bfE i_{Z^\an_\infty!!}(\cdot)\to\bfE i_{Z^\an_\infty\ast}(\cdot)\big)$$
by the fact that the functors $\bfE i_{Z^\an_\infty\ast}$ and $\bfE i_{Z^\an_\infty!!}$ are t-exact
with respect to the perverse t-structures, see Remark \ref{rem2.23} for the details.
\end{itemize}
\end{remark}

Let us recall that 
for any $K\in\Perv(\I\CC_{Z_\infty})$,
we call the image of the canonical morphism $${}^p\bfE i_{Z^\an_\infty!!}K\to{}^p\bfE i_{Z^\an_\infty\ast}K$$
the minimal extension of $K$ along $Z$,
and denote it by ${}^p\bfE i_{Z^\an_\infty!!\ast}K$.
See \cite[Def.\:3.37]{Ito21} for the details.
The following proposition means that
the minimal extension functor ${}^p\bfE i_{Z^\an_\infty!!\ast}$ commutes with functors $I^\rmE$ and $J^\rmE$.

\begin{proposition}\label{prop2.32}
The following diagrams are commutative:
\[\xymatrix@C=45pt@M=5pt{
\Perv(\I\CC_{Z_\infty})\ar@{->}[r]^-{{}^p\bfE i_{Z^\an_\infty!!\ast}}
 & \Perv(\I\CC_{X_\infty})\\
\Perv(\CC_{Z_\infty}^\sub)\ar@{->}[r]_-{{}^p\bfE i_{Z^\an_\infty!!\ast}}
\ar@{->}[u]^-{I^\rmE_{Z^\an_\infty}}
&\Perv(\CC_{X_\infty}^\sub)\ar@{->}[u]_-{I^\rmE_{X^\an_\infty}},}\hspace{33pt}
\xymatrix@C=45pt@M=5pt{
\Perv(\I\CC_{Z_\infty})\ar@{->}[r]^-{{}^p\bfE i_{Z^\an_\infty!!\ast}}
\ar@{->}[d]_-{J^\rmE_{Z^\an_\infty}}
 & \Perv(\I\CC_{X_\infty})\ar@{->}[d]^-{J^\rmE_{X^\an_\infty}}\\
\Perv(\CC_{Z_\infty}^\sub)\ar@{->}[r]_-{{}^p\bfE i_{Z^\an_\infty!!\ast}}
&\Perv(\CC_{X_\infty}^\sub).
}\]
Namely, for any $L\in\Perv(\CC_{Z_\infty}^\sub)$ there exists an isomorphism in $\Perv(\I\CC_{X_\infty})$:
$$I^\rmE_{X^\an_\infty}({}^p\bfE i_{Z^\an_\infty!!\ast}L)
\simeq {}^p\bfE i_{Z^\an_\infty!!\ast}(I^\rmE_{Z^\an_\infty}(L))$$
and for any $K\in\Perv(\I\CC_{Z_\infty})$ there exists an isomorphism in $\Perv(\CC_{X_\infty}^\sub)$:
$$J^\rmE_{X^\an_\infty}({}^p\bfE i_{Z^\an_\infty!!\ast}K)
\simeq {}^p\bfE i_{Z^\an_\infty!!\ast}(J^\rmE_{Z^\an_\infty}(K)).$$
\end{proposition}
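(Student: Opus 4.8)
The plan is to reduce the statement to the already-established compatibilities between the functors $I^\rmE$, $J^\rmE$ and the six operations, together with the perverse $t$-exactness facts from Corollary \ref{cor2.22} and the commutation $I^\rmE_{X^\an_\infty}\circ\bfE i_{Z^\an_\infty!!}\simeq\bfE i_{Z^\an_\infty!!}\circ I^\rmE_{Z^\an_\infty}$ (and similarly for $\ast$) coming from \cite[Prop.\:3.16]{Ito24a}. The key observation is that $I^\rmE_{X^\an_\infty}$ is an exact functor between the abelian categories $\Perv(\CC_{X_\infty}^\sub)$ and $\Perv(\I\CC_{X_\infty})$ --- this is precisely Theorem \ref{main-thm-3}, which says it is an equivalence of abelian categories (hence exact in both directions), and the same for $J^\rmE_{X^\an_\infty}$, which is its quasi-inverse.

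First I would establish the left-hand diagram. Fix $L\in\Perv(\CC_{Z_\infty}^\sub)$. By definition, ${}^p\bfE i_{Z^\an_\infty!!\ast}L$ is the image in the abelian category $\Perv(\CC_{X_\infty}^\sub)$ of the canonical morphism ${}^p\bfE i_{Z^\an_\infty!!}L\to{}^p\bfE i_{Z^\an_\infty\ast}L$, where ${}^p\bfE i_{Z^\an_\infty!!} = {}^p\SH^0\circ\bfE i_{Z^\an_\infty!!}$ and similarly for $\ast$. Applying the exact functor $I^\rmE_{X^\an_\infty}$ to the factorization $${}^p\bfE i_{Z^\an_\infty!!}L\twoheadrightarrow{}^p\bfE i_{Z^\an_\infty!!\ast}L\hookrightarrow{}^p\bfE i_{Z^\an_\infty\ast}L$$ preserves epimorphisms, monomorphisms and the composite, so $I^\rmE_{X^\an_\infty}({}^p\bfE i_{Z^\an_\infty!!\ast}L)$ is the image of $I^\rmE_{X^\an_\infty}({}^p\bfE i_{Z^\an_\infty!!}L)\to I^\rmE_{X^\an_\infty}({}^p\bfE i_{Z^\an_\infty\ast}L)$. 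It then remains to identify $I^\rmE_{X^\an_\infty}\circ{}^p\bfE i_{Z^\an_\infty!!}$ with ${}^p\bfE i_{Z^\an_\infty!!}\circ I^\rmE_{Z^\an_\infty}$ as functors $\Perv(\CC_{Z_\infty}^\sub)\to\Perv(\I\CC_{X_\infty})$: this follows from the isomorphism of (non-truncated) functors $I^\rmE_{X^\an_\infty}\circ\bfE i_{Z^\an_\infty!!}\simeq\bfE i_{Z^\an_\infty!!}\circ I^\rmE_{Z^\an_\infty}$ of \cite[Prop.\:3.16 (2)(ii)]{Ito24a} combined with the fact that $I^\rmE_{X^\an_\infty}$ is $t$-exact for the perverse $t$-structures (Proposition \ref{prop2.12}), so it commutes with ${}^p\SH^0$; the same argument with $\ast$ in place of $!!$ uses \cite[Prop.\:3.16 (3)(i)]{Ito24a} and yields the identification for the monomorphism side. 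Naturality of all these isomorphisms in $L$ ensures they are compatible with the canonical morphism ${}^p\bfE i_{Z^\an_\infty!!}\to{}^p\bfE i_{Z^\an_\infty\ast}$, giving the desired isomorphism $I^\rmE_{X^\an_\infty}({}^p\bfE i_{Z^\an_\infty!!\ast}L)\simeq{}^p\bfE i_{Z^\an_\infty!!\ast}(I^\rmE_{Z^\an_\infty}L)$.

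For the right-hand diagram I would not repeat the argument but instead deduce it formally: since by Theorem \ref{main-thm-3} the functor $I^\rmE_{X^\an_\infty}$ is an equivalence with quasi-inverse $J^\rmE_{X^\an_\infty}$ (and likewise on $Z_\infty$), applying $J^\rmE_{X^\an_\infty}$ to the left-hand commutative square and precomposing with $I^\rmE_{Z^\an_\infty}$ on the source immediately gives $J^\rmE_{X^\an_\infty}\circ{}^p\bfE i_{Z^\an_\infty!!\ast}\simeq{}^p\bfE i_{Z^\an_\infty!!\ast}\circ J^\rmE_{Z^\an_\infty}$ on the image of $I^\rmE_{Z^\an_\infty}$, which is all of $\Perv(\I\CC_{Z_\infty})$. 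The main obstacle, such as it is, is purely bookkeeping: one must check that the identification of images is \emph{natural}, i.e. that the isomorphisms $I^\rmE_{X^\an_\infty}\circ{}^p\bfE i_{Z^\an_\infty!!}\simeq{}^p\bfE i_{Z^\an_\infty!!}\circ I^\rmE_{Z^\an_\infty}$ and the corresponding one for $\ast$ fit into a commutative square with the canonical morphisms on both sides, so that the induced isomorphism on images is well-defined independently of choices. This is a routine consequence of the fact that the isomorphisms of \cite[Prop.\:3.16]{Ito24a} are isomorphisms of functors (hence natural), and that ${}^p\SH^0$ applied to a morphism of distinguished triangles is functorial; no genuinely new input beyond the cited results is needed.
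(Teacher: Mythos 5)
Your proposal is correct and follows essentially the same route as the paper: both use the perverse t-exactness of $I^\rmE$ (Proposition \ref{prop2.12}) to commute it with ${}^p\SH^0$ and with the formation of images, together with the commutation isomorphisms $I^\rmE\circ\bfE i_{Z^\an_\infty!!}\simeq\bfE i_{Z^\an_\infty!!}\circ I^\rmE$ (and the analogue for $\ast$) from \cite[Prop.\:3.16]{Ito24a}. Your formal deduction of the $J^\rmE$ square from the $I^\rmE$ square via the equivalence of Theorem \ref{main-thm-3} is a harmless variant of the paper's ``similar way'' remark.
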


\begin{proof}
Let $L\in\Perv(\CC_{Z_\infty}^\sub)$.
Then we have isomorphisms in $\Perv(\I\CC_{X_\infty})$
\begin{align*}
I^\rmE_{X^\an_\infty}({}^p\bfE i_{Z^\an_\infty!!}L)
&\simeq 
I^\rmE_{X^\an_\infty}({}^p\SH^0(\bfE i_{Z^\an_\infty!!}L))\\
&\simeq 
{}^p\SH^0(I^\rmE_{X^\an_\infty}(\bfE i_{Z^\an_\infty!!}L))\\
&\simeq 
{}^p\SH^0(\bfE i_{Z^\an_\infty!!}I^\rmE_{Z^\an_\infty}(L))\\
&\simeq 
{}^p\bfE i_{Z^\an_\infty!!}I^\rmE_{Z^\an_\infty}(L)
\end{align*}
where in the second isomorphism we used the fact that 
the functor $I^\rmE_{X^\an_\infty}$ is t-exact 
with respect to the perverse t-structures by Proposition \ref{prop2.12}
and in the third isomorphism we used \cite[Prop.\:3.16 (2)(ii)]{Ito24a}.
In the similar way, we have an isomorphism in $\Perv(\I\CC_{X_\infty})$:
$$I^\rmE_{X^\an_\infty}({}^p\bfE i_{Z^\an_\infty\ast}L)
\simeq
{}^p\bfE i_{Z^\an_\infty\ast}I^\rmE_{Z^\an_\infty}(L).$$
Therefore we obtain isomorphisms in $\Perv(\I\CC_{X_\infty}^\sub)$:
\begin{align*}
I^\rmE_{X^\an_\infty}({}^p\bfE i_{Z^\an_\infty!!\ast}L)
&\simeq
I^\rmE_{X^\an_\infty}\left(\Image\left({}^p\bfE i_{Z^\an_\infty!!}L\to{}^p\bfE i_{Z^\an_\infty\ast}L\right)\right)\\
&\simeq
\Image\left(I^\rmE_{X^\an_\infty}({}^p\bfE i_{Z^\an_\infty!!}L)\to
I^\rmE_{X^\an_\infty}({}^p\bfE i_{Z^\an_\infty\ast}L)\right)\\
&\simeq
\Image\left({}^p\bfE i_{Z^\an_\infty!!}(I^\rmE_{Z^\an_\infty}(L))\to
{}^p\bfE i_{Z^\an_\infty\ast}(I^\rmE_{Z^\an_\infty}(L))\right)\\
&\simeq
{}^p\bfE i_{Z^\an_\infty!!\ast}I^\rmE_{Z^\an_\infty}(L)
\end{align*}
where
in the second isomorphism we used fact that
the functor $I^\rmE_{X^\an_\infty}$ is t-exact 
with respect to the perverse t-structures by Proposition \ref{prop2.12},
see also \cite[Rem.\:10.1.15]{KS90} for the details.

The last assertion can be proved in the similar way.
We shall skip the proof.
\end{proof}

The following proposition means that 
the minimal extension functor ${}^p\bfE i_{Z^\an_\infty!!\ast}$ commutes
with the duality functor.

\begin{proposition}
The following diagram is commutative:
\[\xymatrix@C=60pt@M=5pt{
\Perv(\CC_{Z_\infty}^\sub)^\op\ar@{->}[r]^-{{}^p\bfE i_{Z^\an_\infty!!\ast}}
\ar@{->}[d]_-{\rmD^\rmE_{Z^\an_\infty}}^-\wr
 & \Perv(\CC_{X_\infty}^\sub)^\op\ar@{->}[d]^-{\rmD^\rmE_{Z^\an_\infty}}_-\wr\\
\Perv(\CC_{Z_\infty}^\sub)\ar@{->}[r]_-{{}^p\bfE i_{Z^\an_\infty!!\ast}}
&\Perv(\CC_{X_\infty}^\sub).
}\]
Namely, for any $K\in\Perv(\CC_{Z_\infty}^\sub)$,
there exists an isomorphism in $\Perv(\CC_{X_\infty}^\sub)$: 
\[\rmD_{X^\an_\infty}^\rmE({}^p\bfE i_{Z^\an_\infty!!\ast}K)
\simeq
{}^p\bfE i_{Z^\an_\infty!!\ast}(\rmD_{Z^\an_\infty}^\rmE K).\]
\end{proposition}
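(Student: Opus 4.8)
The plan is to deduce the statement from the corresponding fact for algebraic enhanced perverse ind-sheaves, which is already available in \cite[\S 3.4]{Ito21}, by transporting it through the equivalence $I^\rmE_{X^\an_\infty}\colon\Perv(\CC_{X_\infty}^\sub)\simto\Perv(\I\CC_{X_\infty})$ of Theorem \ref{main-thm-3}. The two ingredients that make this work are Proposition \ref{prop2.32} (the minimal extension functor commutes with $I^\rmE$) and the compatibility of the two duality functors, namely $I^\rmE_{X^\an_\infty}\circ\rmD^{\rmE,\sub}_{X^\an_\infty}\simeq\rmD^{\rmE}_{X^\an_\infty}\circ I^\rmE_{X^\an_\infty}$ on $\CC$-constructible objects; the latter is the bordered-space analogue of the isomorphism used inside the proof of Lemma \ref{lem2.7}, obtained from \cite[Prop.\:3.25(1)]{Ito24a} together with \cite[Thm.\:5.10]{Ito24b}.

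Concretely, given $K\in\Perv(\CC_{Z_\infty}^\sub)$, I would apply $I^\rmE_{X^\an_\infty}$ to both sides. By Proposition \ref{prop2.12}, $I^\rmE_{Z^\an_\infty}K$ lies in $\Perv(\I\CC_{Z_\infty})$, so ${}^p\bfE i_{Z^\an_\infty!!\ast}(I^\rmE_{Z^\an_\infty}K)$ is defined and $\CC$-constructible, and ${}^p\bfE i_{Z^\an_\infty!!\ast}K$ is $\CC$-constructible as well. Then, using successively the duality compatibility above, Proposition \ref{prop2.32}, the enhanced ind-sheaf identity $\rmD^{\rmE}_{X^\an_\infty}({}^p\bfE i_{Z^\an_\infty!!\ast}M)\simeq{}^p\bfE i_{Z^\an_\infty!!\ast}(\rmD^{\rmE}_{Z^\an_\infty}M)$ from \cite[\S 3.4]{Ito21}, and then once more the duality compatibility and Proposition \ref{prop2.32} (with $Z$ in place of $X$), one obtains
\[I^\rmE_{X^\an_\infty}\big(\rmD^{\rmE,\sub}_{X^\an_\infty}({}^p\bfE i_{Z^\an_\infty!!\ast}K)\big)\simeq I^\rmE_{X^\an_\infty}\big({}^p\bfE i_{Z^\an_\infty!!\ast}(\rmD^{\rmE,\sub}_{Z^\an_\infty}K)\big).\]
Since $I^\rmE_{X^\an_\infty}$ is fully faithful, this yields the desired isomorphism in $\Perv(\CC_{X_\infty}^\sub)$, and the naturality of all the intermediate isomorphisms gives the commutativity of the square.

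A direct argument is also possible: $\rmD^{\rmE,\sub}_{X^\an_\infty}$ is an exact contravariant auto-equivalence of $\Perv(\CC_{X_\infty}^\sub)$ (the proposition preceding Lemma \ref{lem2.17}), hence it carries the image of a morphism to the image of the dual morphism; combining this with the exchange isomorphisms $\rmD^{\rmE,\sub}_{X^\an_\infty}\circ\bfE i_{Z^\an_\infty!!}\simeq\bfE i_{Z^\an_\infty\ast}\circ\rmD^{\rmE,\sub}_{Z^\an_\infty}$ and $\rmD^{\rmE,\sub}_{X^\an_\infty}\circ\bfE i_{Z^\an_\infty\ast}\simeq\bfE i_{Z^\an_\infty!!}\circ\rmD^{\rmE,\sub}_{Z^\an_\infty}$, and with the fact that ${}^p\SH^0$ intertwines with $\rmD^{\rmE,\sub}$ (since $\rmD^{\rmE,\sub}$ exchanges ${}^p\bfE^{\leq0}$ and ${}^p\bfE^{\geq0}$, Lemma \ref{lem2.17}), one reads off the claim directly from Definition \ref{main-def-4}. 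In either route, the one step that is not purely formal is checking that the canonical natural transformation $\bfE i_{Z^\an_\infty!!}\to\bfE i_{Z^\an_\infty\ast}$ is exchanged with itself under duality, i.e.\ that the dual of the canonical morphism is again the canonical morphism for the dualized object; I expect this to be the main point to pin down. It can be verified from the definitions of the adjunction morphisms for enhanced subanalytic sheaves on bordered spaces in \cite{Ito24a}, or, consistently with the rest of the section, imported from the corresponding statement for enhanced ind-sheaves via $I^\rmE$ and $J^\rmE$ using \cite[Prop.\:3.16(2)]{Ito24a} and \cite[Prop.\:3.25]{Ito24a}. Once that is settled, the remaining verifications (t-exactness of $I^\rmE_{X^\an_\infty}$ by Proposition \ref{prop2.12}, exactness of $\rmD^{\rmE,\sub}$ on the heart, and preservation of images by exact contravariant functors) are routine.
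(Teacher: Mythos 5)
Your primary route is precisely the paper's proof: reduce to the corresponding enhanced ind-sheaf statement \cite[Prop.\:3.39]{Ito21} by transporting through the equivalence $I^\rmE_{X^\an_\infty}$ of Theorem \ref{main-thm-3}, using Proposition \ref{prop2.32} for the minimal extensions and \cite[Prop.\:3.25]{Ito24a} (together with the $\CC$-constructibility results of \cite{Ito24b}) for the compatibility of the two dualities, then conclude by full faithfulness. The point you flag as non-formal --- that the canonical morphism $\bfE i_{Z^\an_\infty!!}\to\bfE i_{Z^\an_\infty\ast}$ is exchanged with itself under duality --- is only needed for your alternative direct argument; in the route you (and the paper) actually take, it is already absorbed into the cited ind-sheaf result.
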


\begin{proof}
This proposition follows from Theorem \ref{main-thm-3}, Proposition \ref{prop2.32},
\cite[Prop.\:3.25 (1), (2)]{Ito24a}, \cite[Thm.\:4.19]{Ito24b} and \cite[Prop.\:3.39]{Ito21}.
\end{proof}

Let us recall that the minimal extension ${}^p\bfR i_{Z!\ast}\F$ of a perverse sheaf $F$ along $Z$ is defined by
the image of the canonical morphism \[{}^p\bfR i_{Z^\an!}\F\to{}^p\bfR i_{Z^\an\ast}\F,\]
where ${}^p\bfR i_{Z^\an!} := {}^p\SH^0\circ\bfR i_{Z^\an!}$,
${}^p\bfR i_{Z^\an\ast} := {}^p\SH^0\circ\bfR i_{Z^\an\ast}$
and ${}^p\SH^0$ is the $0$-th cohomology functor with respect to the perverse t-structures.
See \cite{BBD} (also \cite[\S 8.2.2]{HTT}) for the details.
The following proposition means that the natural embedding functor
commutes with the minimal extension functor.

\begin{proposition}
The following diagram is commutative:
\[\xymatrix@C=60pt@M=5pt{
\Perv(\CC_{Z_\infty}^\sub)\ar@{->}[r]^-{{}^p\bfE i_{Z^\an_\infty!!\ast}}
\ar@{}[rd]|{\rotatebox[origin=c]{180}{$\circlearrowright$}}
 & \Perv(\CC_{X_\infty}^\sub)\\
\Perv(\CC_{Z})\ar@{^{(}->}[u]^-{e_{Z^\an_\infty}^\sub\circ\rho_{Z^\an_\infty\ast}}
\ar@{->}[r]_-{{}^p\bfR i_{Z^\an!\ast}}
&\Perv(\CC_{X})\ar@{^{(}->}[u]_-{e_{X^\an_\infty}^\sub\circ\rho_{X^\an_\infty\ast}}.
}\]
Namely, for any $\SF\in\Perv(\CC_{Z})$,
there exists an isomorphism in $\Perv(\CC_{X_\infty}^\sub):$
\[e_{X^\an_\infty}^\sub(\rho_{X^\an_\infty\ast}({}^p\bfR i_{Z^\an!\ast}\SF))
\simeq
{}^p\bfE i_{Z^\an!!\ast}(e_{Z^\an_\infty}^\sub(\rho_{Z^\an_\infty\ast}(\SF))).\]
\end{proposition}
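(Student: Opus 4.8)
The plan is to transport the statement through the fully faithful functor $I^\rmE_{X^\an_\infty}$ and reduce it to the corresponding compatibility for algebraic enhanced perverse ind-sheaves proved in \cite{Ito21}. The two ingredients I would use are: first, the commutation of the minimal extension functor with $I^\rmE$ established in Proposition \ref{prop2.32}; and second, the identification $I^\rmE_{X^\an_\infty}\circ e_{X^\an_\infty}^\sub\circ\rho_{X^\an_\infty\ast}\simeq e_{X^\an_\infty}\circ\iota_{X^\an_\infty}$ (and likewise over $Z$), which follows from \cite[Props.\:3.4 (4)(i), 3.21]{Ito24a} exactly as in the proof of the proposition on simple perverse sheaves above.

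Concretely, for $\SF\in\Perv(\CC_Z)$ I would assemble the chain of isomorphisms
\begin{align*}
I^\rmE_{X^\an_\infty}\big(e_{X^\an_\infty}^\sub(\rho_{X^\an_\infty\ast}({}^p\bfR i_{Z^\an!\ast}\SF))\big)
&\simeq e_{X^\an_\infty}\big(\iota_{X^\an_\infty}({}^p\bfR i_{Z^\an!\ast}\SF)\big)\\
&\simeq {}^p\bfE i_{Z^\an_\infty!!\ast}\big(e_{Z^\an_\infty}(\iota_{Z^\an_\infty}(\SF))\big)\\
&\simeq {}^p\bfE i_{Z^\an_\infty!!\ast}\big(I^\rmE_{Z^\an_\infty}(e_{Z^\an_\infty}^\sub(\rho_{Z^\an_\infty\ast}(\SF)))\big)\\
&\simeq I^\rmE_{X^\an_\infty}\big({}^p\bfE i_{Z^\an_\infty!!\ast}(e_{Z^\an_\infty}^\sub(\rho_{Z^\an_\infty\ast}(\SF)))\big),
\end{align*}
where the first and third isomorphisms use \cite[Props.\:3.4 (4)(i), 3.21]{Ito24a}, the second is the analogous statement for algebraic enhanced perverse ind-sheaves in \cite{Ito21}, and the last one is Proposition \ref{prop2.32}. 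Since $I^\rmE_{X^\an_\infty}$ is an equivalence of categories by Theorem \ref{main-thm-3}, this yields the desired isomorphism in $\Perv(\CC_{X_\infty}^\sub)$; one should also record that each of these isomorphisms is natural in $\SF$, so that the square in the statement commutes, which is routine once the corresponding naturality statements are in place.

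The main obstacle I expect lies in the second step: one must make sure that the ind-sheaf counterpart invoked there — that $e_{X^\an_\infty}\circ\iota_{X^\an_\infty}$ commutes with ${}^p\bfE i_{Z^\an_\infty!!\ast}$ — is available in the cited form in \cite{Ito21}. If it is recorded there only for the operations $\bfE i_{Z^\an_\infty!!}$ and $\bfE i_{Z^\an_\infty\ast}$ separately, together with the canonical morphism between them, one has to first pass to ${}^p\SH^0$ using the perverse t-exactness of $e_{X^\an_\infty}\circ\iota_{X^\an_\infty}$ and of ${}^p\bfR i_{Z^\an!}$, ${}^p\bfR i_{Z^\an\ast}$, and then use that a t-exact functor, being exact on the hearts, commutes with the formation of images of morphisms. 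Alternatively one can bypass \cite{Ito21} and argue directly in $\BEC(\CC_{X_\infty^\an}^\sub)$: the embedding $e_{X^\an_\infty}^\sub\circ\rho_{X^\an_\infty\ast}$ is t-exact for the perverse t-structures by the earlier proposition of this subsection, it intertwines $\bfR i_{Z^\an!}$ with $\bfE i_{Z^\an_\infty!!}$ and $\bfR i_{Z^\an\ast}$ with $\bfE i_{Z^\an_\infty\ast}$ compatibly with the canonical morphisms (via the functorial identities for $\rho_{X^\an_\infty\ast}$ and $e_{X^\an_\infty}^\sub$ in \cite{Ito24a}), and an exact functor between abelian categories commutes with images; combining these facts gives the claim directly.
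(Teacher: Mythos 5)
Your proposal is correct and follows essentially the same route as the paper: the paper's proof likewise reduces to the enhanced ind-sheaf statement via the equivalence $I^\rmE_{X^\an_\infty}$ of Theorem \ref{main-thm-3}, combining Proposition \ref{prop2.32} with \cite[Prop.\:3.21]{Ito24a} and the ind-sheaf counterpart \cite[Prop.\:3.40]{Ito21}, which is precisely the statement you worried might not be available in the cited form. Your chain of isomorphisms is exactly the intended argument, so the fallback and the alternative direct argument you sketch are not needed.
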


\begin{proof}
This proposition follows from Theorem \ref{main-thm-3}, Proposition \ref{prop2.32},
 \cite[Prop.\:3.21]{Ito24a} and \cite[Prop.\:3.40]{Ito21}.
\end{proof}

Recall that in the case when $Z$ is a closed smooth subvariety of $X$,
minimal extensions along $Z$ can be characterized
by Proposition \ref{prop2.25}, see also Remark \ref{rem2.31} (2). 
On the other hand, in the case when $Z$ is open whose complement is a smooth subvariety,
the minimal extensions along $Z$ can be characterized as follows.
Let $U$ be such an open subset of $X$ and set $W := X\setminus U$.
Namely $U$ is an open subset of $X$ and $W := X\setminus U$ is a closed smooth subvariety of $X$. 

\begin{proposition}
In the situation as above,
the minimal extension ${}^p\bfE i_{U^\an_\infty!!\ast}K$ of $K\in\Perv(\CC_{U_\infty}^\sub)$ along $U$
is characterized as the unique algebraic enhanced perverse subanalytic sheaf $L$ on $X_\infty$ satisfying the conditions
\begin{itemize}
\setlength{\itemsep}{-2pt}
\item[\rm(1)]
$\bfE i_{U^\an_\infty}^{-1}L\simeq K$,
\item[\rm(2)]
$\bfE i_{W^\an_\infty}^{-1}L\in\bfE^{\leq-1}_{\CC\mbox{\scriptsize -}c}(\CC_{W_\infty}^\sub)$,
\item[\rm(3)]
$\bfE i_{W^\an_\infty}^{!}L\in\bfE^{\geq1}_{\CC\mbox{\scriptsize -}c}(\CC_{W_\infty}^\sub)$.
\end{itemize}
\end{proposition}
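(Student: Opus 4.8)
The plan is to mimic the classical characterization of the intermediate extension due to Beilinson--Bernstein--Deligne (see \cite{BBD}, also \cite[\S 8.2.2]{HTT}), carried out inside the abelian category $\Perv(\CC_{X_\infty}^\sub)$ by means of the six operations for enhanced subanalytic sheaves on bordered spaces (\cite[\S\S 2.4, 3.3]{Ito24a}) together with Corollary \ref{cor2.22} and Proposition \ref{prop2.25}. Write $j := i_{U^\an_\infty}\colon U^\an_\infty\to X^\an_\infty$ for the open embedding and $i := i_{W^\an_\infty}\colon W^\an_\infty\to X^\an_\infty$ for the closed one. I shall use freely that $\bfE j^{-1}(\simeq\bfE j^{!})$ is t-exact, that $\bfE i_{!!}\simeq\bfE i_{\ast}$ is t-exact and that it identifies $\Perv(\CC_{W_\infty}^\sub)$ with $\Perv_W(\CC_{X_\infty}^\sub)$ (Corollary \ref{cor2.22}, Proposition \ref{prop2.25}), that $\bfE j_{!!}$ is right t-exact and $\bfE j_{\ast}$ is left t-exact (as one-sided adjoints of the t-exact functor $\bfE j^{-1}$), that $\bfE i^{-1}$ and $\bfE i^{!}$ are respectively right and left t-exact for the perverse t-structures (Propositions \ref{prop2.9}, \ref{prop2.10} with $d=0$), the localization triangles $\bfE j_{!!}\bfE j^{-1}\to\id\to\bfE i_{\ast}\bfE i^{-1}\overset{+1}{\to}$ and $\bfE i_{\ast}\bfE i^{!}\to\id\to\bfE j_{\ast}\bfE j^{-1}\overset{+1}{\to}$, and the vanishings $\bfE i^{-1}\bfE j_{!!}\simeq 0$ and $\bfE i^{!}\bfE j_{\ast}\simeq 0$. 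Here the conditions (2) and (3) are read with respect to the perverse t-structure on $\BEC_{\CC\mbox{\scriptsize -}c}(\CC_{W_\infty}^\sub)$.

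\emph{Step 1: the minimal extension satisfies (1)--(3).} Put $L_0 := {}^p\bfE i_{U^\an_\infty!!\ast}K = \Image\big({}^p\bfE j_{!!}K\to{}^p\bfE j_{\ast}K\big)$. Since $\bfE j^{-1}$ is exact and commutes with ${}^p\SH^{0}$, one has $\bfE j^{-1}\,{}^p\bfE j_{!!}K\simeq K\simeq\bfE j^{-1}\,{}^p\bfE j_{\ast}K$, with $\bfE j^{-1}$ of the canonical morphism equal to $\id_K$; applying the exact functor $\bfE j^{-1}$ to $L_0$ then gives (1). For (2): from $\bfE j_{!!}K\in{}^p\bfE^{\leq0}_{\CC\mbox{\scriptsize -}c}(\CC_{X_\infty}^\sub)$, $\bfE i^{-1}\bfE j_{!!}K\simeq 0$ and the triangle ${}^p\tau^{\leq-1}\bfE j_{!!}K\to\bfE j_{!!}K\to{}^p\bfE j_{!!}K\overset{+1}{\to}$, right t-exactness of $\bfE i^{-1}$ yields $\bfE i^{-1}\,{}^p\bfE j_{!!}K\in{}^p\bfE^{\leq-2}_{\CC\mbox{\scriptsize -}c}(\CC_{W_\infty}^\sub)$; as $N := \Ker\big({}^p\bfE j_{!!}K\to{}^p\bfE j_{\ast}K\big)$ lies in $\Perv_W(\CC_{X_\infty}^\sub)$, so that $\bfE i^{-1}N$ is perverse, the perverse cohomology long exact sequence of $\bfE i^{-1}N\to\bfE i^{-1}\,{}^p\bfE j_{!!}K\to\bfE i^{-1}L_0\overset{+1}{\to}$ forces ${}^p\SH^{k}(\bfE i^{-1}L_0)=0$ for all $k\geq 0$, which is (2). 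Condition (3) is the Verdier-dual statement, deduced the same way from $\bfE j_{\ast}K\in{}^p\bfE^{\geq0}_{\CC\mbox{\scriptsize -}c}(\CC_{X_\infty}^\sub)$, $\bfE i^{!}\bfE j_{\ast}K\simeq 0$, left t-exactness of $\bfE i^{!}$, and the cokernel of $L_0\hookrightarrow{}^p\bfE j_{\ast}K$ lying in $\Perv_W(\CC_{X_\infty}^\sub)$.

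\emph{Step 2: uniqueness.} Let $L\in\Perv(\CC_{X_\infty}^\sub)$ satisfy (1)--(3). First, $L$ has no nonzero subobject in $\Perv_W(\CC_{X_\infty}^\sub)$: if $M\hookrightarrow L$ with $M\simeq\bfE i_{!!}\bfE i^{-1}M$, then $\Hom(M,L)\simeq\Hom_{W^\an_\infty}(\bfE i^{-1}M,\bfE i^{!}L)=0$ by the adjunction $\bfE i_{!!}\dashv\bfE i^{!}$, since $\bfE i^{-1}M$ is perverse and $\bfE i^{!}L\in{}^p\bfE^{\geq1}_{\CC\mbox{\scriptsize -}c}(\CC_{W_\infty}^\sub)$ by (3); dually, using (2) and $\bfE i^{-1}\dashv\bfE i_{\ast}$, $L$ has no nonzero quotient in $\Perv_W(\CC_{X_\infty}^\sub)$. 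Next, from (1) and the adjunctions $\bfE j_{!!}\dashv\bfE j^{-1}\dashv\bfE j_{\ast}$ we obtain canonical morphisms $\bfE j_{!!}K\to L\to\bfE j_{\ast}K$ whose pullback by $\bfE j^{-1}$ is $\id_K$; since $L$ is perverse while $\bfE j_{!!}K\in{}^p\bfE^{\leq0}_{\CC\mbox{\scriptsize -}c}$ and $\bfE j_{\ast}K\in{}^p\bfE^{\geq0}_{\CC\mbox{\scriptsize -}c}$, these factor through ${}^p\SH^{0}$ to give $\bar{\varphi}\colon{}^p\bfE j_{!!}K\to L$ and $\psi\colon L\to{}^p\bfE j_{\ast}K$ in $\Perv(\CC_{X_\infty}^\sub)$ whose composite is the canonical morphism ${}^p\bfE j_{!!}K\to{}^p\bfE j_{\ast}K$. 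Then $\Coker(\bar{\varphi})$ is a quotient of $L$ annihilated by $\bfE j^{-1}$, hence lies in $\Perv_W$, hence vanishes, so $\bar{\varphi}$ is an epimorphism; likewise $\Ker(\psi)$ lies in $\Perv_W$ and vanishes, so $\psi$ is a monomorphism. Therefore $L\simeq\Image(\psi\circ\bar{\varphi})=L_0$, and the naturality of this construction gives the asserted uniqueness; compare \cite{BBD} and \cite[\S 8.2.2]{HTT}.

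\emph{Main obstacle.} Step 2 is purely formal once the six operations and their t-exactness are in hand, so the substance is Step 1, and within it the input I expect to need the most care is setting up the recollement in the bordered-space setting for enhanced subanalytic sheaves --- the two localization triangles, the vanishings $\bfE i^{-1}\bfE j_{!!}\simeq 0$ and $\bfE i^{!}\bfE j_{\ast}\simeq 0$, and the one-sided t-exactness of $\bfE i^{-1}$ and $\bfE i^{!}$ along a closed smooth subvariety --- which must be assembled from \cite[Prop.\:3.16]{Ito24a} and the localization formalism for bordered spaces. A shortcut that avoids some of this is to transport the whole assertion through the t-exact equivalence $I_{X^\an_\infty}^{\rmE}$ of Theorem \ref{main-thm-3}: since $I^{\rmE}$ commutes with $\bfE j^{-1}$, $\bfE i^{-1}$, $\bfE i^{!}$ and with the minimal extension functor ${}^p\bfE i_{U^\an_\infty!!\ast}$ (Proposition \ref{prop2.32} together with \cite[Lem.\:3.19]{Ito24a}), each of the conditions (1)--(3) and the conclusion is equivalent to its enhanced perverse ind-sheaf counterpart, so the proposition follows from the corresponding characterization of minimal extensions for enhanced perverse ind-sheaves in \cite{Ito21}.
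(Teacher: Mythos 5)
Your argument is correct, but your primary route is genuinely different from the paper's: you run the classical Beilinson--Bernstein--Deligne recollement argument directly inside $\Perv(\CC_{X_\infty}^\sub)$ (truncation of $\bfE j_{!!}K$ and $\bfE j_{\ast}K$, support-on-$W$ kernels/cokernels, and the adjunction vanishings in Step 2), whereas the paper's proof is precisely the ``shortcut'' you mention at the end --- it transports the whole statement through the t-exact equivalence $I^{\rmE}_{X^\an_\infty}$ (Theorem \ref{main-thm-3}, Proposition \ref{prop2.12}), uses the compatibilities of $I^{\rmE}$ with $\bfE i^{-1}$, $\bfE i^{!}$ (\cite[Prop.\:3.16 (2)(i), (iii)]{Ito24a}) and with minimal extensions (Proposition \ref{prop2.32}), and then invokes the corresponding characterization for enhanced perverse ind-sheaves \cite[Prop.\:3.42]{Ito21}. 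The trade-off is exactly the one you identify: your direct proof is self-contained at the level of the abelian category and mirrors the sheaf-theoretic original, but it presupposes the recollement package for enhanced subanalytic sheaves on bordered spaces (the two localization triangles, $\bfE i^{-1}\bfE j_{!!}\simeq 0$, $\bfE i^{!}\bfE j_{\ast}\simeq 0$, and the one-sided t-exactness statements), which is not assembled in this paper or cited as such --- the t-exactness inputs are available here (Corollary \ref{cor2.22} with $d=0$; note your citation of a ``Proposition 2.10'' has no counterpart label), but the triangles and vanishings would either have to be proved in the subanalytic setting or themselves transported through $I^{\rmE}$ via \cite[Prop.\:3.16]{Ito24a}, at which point the argument collapses into the paper's one-line reduction. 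The paper's route buys brevity and avoids re-proving recollement; yours buys an intrinsic proof in the subanalytic framework and makes the mechanism behind conditions (2) and (3) visible.
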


\begin{proof}
This proposition follows from Theorem \ref{main-thm-3}, Propositions \ref{prop2.12}, \ref{prop2.32},
 \cite[Prop.\:3.16 (2)(i), (iii)]{Ito24a} and \cite[Prop.\:3.42]{Ito21}.
\end{proof}

Furthermore the minimal extensions along $U$ have following properties.
\begin{proposition}\label{prop2.36}
In the situation as above, for any $K\in \Perv(\CC_{U_\infty}^\sub)$
\begin{itemize}
\item[\rm(1)]
${}^p\bfE i_{U^\an_\infty\ast}K\in\Perv(\CC_{X_\infty}^\sub)$
has no non-trivial subobject in $\Perv(\CC_{X_\infty}^\sub)$ whose support is contained in $W^\an$.
\item[\rm(2)]
${}^p\bfE i_{U^\an_\infty!!}K\in\Perv(\CC_{X_\infty}^\sub)$
has no non-trivial quotient object in $\Perv(\CC_{X_\infty}^\sub)$ whose support is contained in $W^\an$.
\end{itemize}
\end{proposition}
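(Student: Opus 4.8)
The plan is to prove (1) by a $\Hom$-vanishing argument and to deduce (2) from (1) by applying the duality functor. Since $W = X\setminus U$ is a closed smooth subvariety, Corollary \ref{cor2.22} shows that $\bfE i_{W^\an_\infty\ast}\simeq\bfE i_{W^\an_\infty!!}$ is t-exact, and Proposition \ref{prop2.25} identifies $\Perv_W(\CC_{X_\infty}^\sub)$ with the essential image of $\bfE i_{W^\an_\infty\ast}\colon\Perv(\CC_{W_\infty}^\sub)\to\Perv(\CC_{X_\infty}^\sub)$; hence a non-trivial subobject of ${}^p\bfE i_{U^\an_\infty\ast}K$ supported in $W^\an$ is the same thing as a non-zero morphism $\bfE i_{W^\an_\infty\ast}M\to{}^p\bfE i_{U^\an_\infty\ast}K$ in $\Perv(\CC_{X_\infty}^\sub)$ with $M\in\Perv(\CC_{W_\infty}^\sub)$, $M\ne 0$. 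Moreover, since $i_{U^\an_\infty}$ is an open embedding, $\bfE i_{U^\an_\infty}^{-1}\simeq\bfE i_{U^\an_\infty}^{!}$ is t-exact (Corollary \ref{cor2.22}), so its right adjoint $\bfE i_{U^\an_\infty\ast}$ is left t-exact; thus $\bfE i_{U^\an_\infty\ast}K\in{}^p\bfE^{\geq0}_{\CC\mbox{\scriptsize -}c}(\CC_{X_\infty}^\sub)$ and ${}^p\bfE i_{U^\an_\infty\ast}K={}^p\SH^0(\bfE i_{U^\an_\infty\ast}K)={}^p\tau^{\leq0}(\bfE i_{U^\an_\infty\ast}K)$.

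The key step is the computation
$$\Hom_{\Perv(\CC_{X_\infty}^\sub)}\big(\bfE i_{W^\an_\infty\ast}M,\,{}^p\bfE i_{U^\an_\infty\ast}K\big)
\ \simeq\ \Hom_{\BEC(\CC_{X_\infty^\an}^\sub)}\big(\bfE i_{W^\an_\infty\ast}M,\,\bfE i_{U^\an_\infty\ast}K\big)
\ \simeq\ \Hom_{\BEC(\CC_{U_\infty^\an}^\sub)}\big(\bfE i_{U^\an_\infty}^{-1}\bfE i_{W^\an_\infty\ast}M,\,K\big).$$
Here the first isomorphism combines the identity $\Hom_{\Perv}=\Hom_{\BEC}$ on objects of the heart with the t-structure fact that $\Hom(P,{}^p\tau^{\leq0}C)\simeq\Hom(P,C)$ whenever $P\in{}^p\bfE^{\leq0}$ (applied to $P=\bfE i_{W^\an_\infty\ast}M\in{}^p\bfE^{\leq0}$ and $C=\bfE i_{U^\an_\infty\ast}K$), and the second is the $(\bfE i_{U^\an_\infty}^{-1},\bfE i_{U^\an_\infty\ast})$-adjunction. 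Since $U^\an\cap W^\an=\emptyset$, base change for $\bfE i_{W^\an_\infty\ast}\simeq\bfE i_{W^\an_\infty!!}$ along $i_{U^\an_\infty}$ (the relevant fiber product is empty) gives $\bfE i_{U^\an_\infty}^{-1}\bfE i_{W^\an_\infty\ast}M\simeq 0$; hence the $\Hom$-group vanishes and (1) follows.

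For (2), recall that the duality functor $\rmD_{X_\infty^\an}^{\rmE,\sub}$ restricts to an exact anti-autoequivalence of $\Perv(\CC_{X_\infty}^\sub)$ (Lemma \ref{lem2.17} and the proposition preceding it): it interchanges $\bfE i_{U^\an_\infty\ast}$ with $\bfE i_{U^\an_\infty!!}$ (hence ${}^p\bfE i_{U^\an_\infty\ast}$ with ${}^p\bfE i_{U^\an_\infty!!}$), interchanges subobjects with quotient objects, and preserves the support condition $\supp(\cdot)\subset W^\an$ (it commutes with $\bfE i_{U^\an_\infty}^{-1}\simeq\bfE i_{U^\an_\infty}^{!}$); so (2) is exactly the image of (1). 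As an alternative route, one can transport both (1) and (2) along the t-exact equivalence $I_{X_\infty^\an}^{\rmE}\colon\Perv(\CC_{X_\infty}^\sub)\simto\Perv(\I\CC_{X_\infty})$ of Theorem \ref{main-thm-3}—which commutes with the functors in play by (the proof of) Proposition \ref{prop2.32} together with \cite[Prop.\:3.16]{Ito24a}, and respects supports—to the corresponding statement for algebraic enhanced perverse ind-sheaves in \cite{Ito21}. In either approach the one point needing care is the same: checking that passing to the perverse truncation ${}^p\SH^0$ does not affect the $\Hom$-group in the key step (equivalently, for the transport approach, that $I_{X_\infty^\an}^{\rmE}$ matches up ``subobject/quotient with support in $W^\an$'' on the two sides); the disjoint-support vanishing $\bfE i_{U^\an_\infty}^{-1}\bfE i_{W^\an_\infty\ast}(\cdot)\simeq 0$ is routine.
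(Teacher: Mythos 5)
Your proposal is correct, but its primary route is genuinely different from the paper's. The paper disposes of this proposition in one line: it transports the statement through the t-exact equivalence $I_{X_\infty^\an}^{\rmE}\colon\Perv(\CC_{X_\infty}^\sub)\simto\Perv(\I\CC_{X_\infty})$ of Theorem \ref{main-thm-3}, using the compatibility of the relevant functors with $I^\rmE/J^\rmE$ (Proposition \ref{prop2.32} together with \cite[Prop.\:3.16]{Ito24a}) to reduce to the corresponding statement for enhanced perverse ind-sheaves, \cite[Prop.\:3.43]{Ito21} --- exactly the route you list as your ``alternative.'' Your main argument instead reproves the statement directly in the enhanced subanalytic framework: identify a subobject supported in $W^\an$ with $\bfE i_{W^\an_\infty\ast}M$ via Proposition \ref{prop2.25} (using $\bfE i_{W^\an_\infty!!}\simeq\bfE i_{W^\an_\infty\ast}$ for closed $W$), note $\bfE i_{U^\an_\infty\ast}K\in{}^p\bfE^{\geq0}$ (this is just Corollary \ref{cor2.22}\,(2) for $Z=U$, so you need not rederive it from adjunction), and kill the $\Hom$-group by adjunction plus the disjoint-support vanishing $\bfE i_{U^\an_\infty}^{-1}\bfE i_{W^\an_\infty\ast}M\simeq0$; then deduce (2) by duality. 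The details check out: the truncation step $\Hom(P,{}^p\tau^{\leq0}C)\simeq\Hom(P,C)$ for $P$ in ${}^p\bfE^{\leq0}$ is standard, and the image of a morphism from an object supported in $W^\an$ is again supported in $W^\an$ by t-exactness of open restriction, so existence of a non-trivial subobject supported in $W^\an$ is indeed equivalent to non-vanishing of that $\Hom$. What your route buys is a self-contained proof that does not invoke \cite[Prop.\:3.43]{Ito21} (whose proof is essentially this same argument on the ind-sheaf side); what it costs is that you must have the adjunction, proper base change, and, for part (2), the commutation $\rmD^{\rmE,\sub}\circ\bfE i_{U^\an_\infty!!}\simeq\bfE i_{U^\an_\infty\ast}\circ\rmD^{\rmE,\sub}$ available for enhanced subanalytic sheaves in the constructible setting --- the last of these is the one place where you are a bit breezy and should cite \cite[Props.\:3.16, 3.25]{Ito24a} or fall back on transport by $I^\rmE$, as you yourself note. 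Either way the argument stands.
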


\begin{proof}
This proposition follows from Theorem \ref{main-thm-3}, Propositions \ref{prop2.32} and \cite[Prop.\:3.43]{Ito21}.
\end{proof}

\begin{corollary}
In the situation as above,
the minimal extension ${}^p\bfE i_{U^\an_\infty!!\ast}K$ of $K\in\Perv(\CC_{U_\infty}^\sub)$ along $U$
has neither non-trivial subobject nor non-trivial quotient object in $\Perv(\CC_{X_\infty}^\sub)$
whose support is contained in $W^\an$.
\end{corollary}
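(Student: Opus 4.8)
The plan is to deduce the corollary directly from Proposition \ref{prop2.36}, using nothing more than the definition of the minimal extension and formal properties of the abelian category $\Perv(\CC_{X_\infty}^\sub)$. Set $L := {}^p\bfE i_{U^\an_\infty!!\ast}K$. By Definition \ref{main-def-4}, $L$ is the image in $\Perv(\CC_{X_\infty}^\sub)$ of the canonical morphism ${}^p\bfE i_{U^\an_\infty!!}K\to{}^p\bfE i_{U^\an_\infty\ast}K$; hence, by the construction of the image in an abelian category, there is a canonical epimorphism ${}^p\bfE i_{U^\an_\infty!!}K\twoheadrightarrow L$ and a canonical monomorphism $L\hookrightarrow{}^p\bfE i_{U^\an_\infty\ast}K$.

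First I would handle subobjects. Let $M\in\Perv(\CC_{X_\infty}^\sub)$ be a subobject of $L$ with $\supp(M)\subset W^\an$. Composing the monomorphism $M\hookrightarrow L$ with $L\hookrightarrow{}^p\bfE i_{U^\an_\infty\ast}K$ exhibits $M$ as a subobject of ${}^p\bfE i_{U^\an_\infty\ast}K$, still with support contained in $W^\an$, since the support is an intrinsic property of the object $M$. Proposition \ref{prop2.36} (1) then forces $M\simeq0$. Dually, let $N$ be a quotient object of $L$ with $\supp(N)\subset W^\an$; composing ${}^p\bfE i_{U^\an_\infty!!}K\twoheadrightarrow L$ with $L\twoheadrightarrow N$ exhibits $N$ as a quotient object of ${}^p\bfE i_{U^\an_\infty!!}K$ with support contained in $W^\an$, and Proposition \ref{prop2.36} (2) forces $N\simeq0$. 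This establishes both halves of the statement.

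There is essentially no obstacle here: the argument is purely formal once Proposition \ref{prop2.36} is available, the only facts used being that monomorphisms (resp.\ epimorphisms) compose in $\Perv(\CC_{X_\infty}^\sub)$ and that the support of an object is unaffected by regarding it, through a composite monomorphism or epimorphism, as a subobject or quotient of a larger perverse subanalytic sheaf. If one prefers, the same conclusion can be transported through the equivalence $I^\rmE_{X^\an_\infty}\colon\Perv(\CC_{X_\infty}^\sub)\simto\Perv(\I\CC_{X_\infty})$ of Theorem \ref{main-thm-3}, which is t-exact and compatible with minimal extensions by Proposition \ref{prop2.32}, and then appealing to the corresponding property for enhanced perverse ind-sheaves; but the direct route above is shorter and self-contained within this section.
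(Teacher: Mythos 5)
Your proposal is correct and follows the same route as the paper: the paper's proof likewise combines Proposition \ref{prop2.36} with the canonical factorization ${}^p\bfE i_{U^\an_\infty!!}K\twoheadrightarrow{}^p\bfE i_{U^\an_\infty!!\ast}K\hookrightarrow{}^p\bfE i_{U^\an_\infty\ast}K$, and your argument merely spells out the formal composition of monomorphisms and epimorphisms that the paper leaves implicit.
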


\begin{proof}
This corollary follows from Proposition \ref{prop2.36} and fact that
there exist the canonical morphisms in $\Perv(\CC_{X_\infty}^\sub)$:
\[{}^p\bfE i_{U^\an_\infty!!}K\twoheadrightarrow
{}^p\bfE i_{U^\an_\infty!!\ast}K\hookrightarrow
{}^p\bfE i_{U^\an_\infty\ast}K.\]
\end{proof}

Let $Z$ be a locally closed smooth subvariety of $X$
(not necessarily the natural embedding $i_Z\colon Z\hookrightarrow X$ is affine) again.

\begin{proposition}\label{main-thm-6}
We assume that there exist an open subset $U\subset X$ and a closed subvariety $W\subset X$
such that
$Z = U\cap W$ and the complement $X\setminus U$ of $U$ is smooth.
Then we have:

\begin{itemize}
\item[\rm(1)]
%
\begin{itemize}
\item[\rm(i)]
For an exact sequence $0\to K\to L$ in $\Perv(\CC_{Z_\infty}^\sub)$,
the associated sequence $0\to {}^p\bfE i_{Z^\an_\infty!!\ast}K\to {}^p\bfE i_{Z^\an_\infty!!\ast}L$ 
in $\Perv(\CC_{X_\infty}^\sub)$
is also exact.
\item[\rm(ii)]
For an exact sequence $K\to L\to 0$ in $\Perv(\CC_{Z_\infty}^\sub)$,
the associated sequence ${}^p\bfE i_{Z^\an_\infty!!\ast}K\to {}^p\bfE i_{Z^\an_\infty!!\ast}L\to0$
in $\Perv(\CC_{X_\infty}^\sub)$
is also exact.
\end{itemize}

\item[\rm(2)]
For any simple object in $\Perv(\CC_{Z_\infty}^\sub)$,
its minimal extension along $Z$ is also a simple object in $\Perv(\CC_{X_\infty}^\sub)$.
\end{itemize}
\end{proposition}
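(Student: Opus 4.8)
The plan is to deduce Proposition \ref{main-thm-6} from the counterparts of these statements for algebraic enhanced perverse ind-sheaves proved in \cite{Ito21}, transported through the equivalences of abelian categories $I^\rmE_{Z^\an_\infty}\colon\Perv(\CC_{Z_\infty}^\sub)\simto\Perv(\I\CC_{Z_\infty})$ and $I^\rmE_{X^\an_\infty}\colon\Perv(\CC_{X_\infty}^\sub)\simto\Perv(\I\CC_{X_\infty})$ of Theorem \ref{main-thm-3}. The device that makes the transport work is Proposition \ref{prop2.32}: it provides a natural isomorphism $I^\rmE_{X^\an_\infty}\circ{}^p\bfE i_{Z^\an_\infty!!\ast}\simeq{}^p\bfE i_{Z^\an_\infty!!\ast}\circ I^\rmE_{Z^\an_\infty}$ of functors $\Perv(\CC_{Z_\infty}^\sub)\to\Perv(\I\CC_{X_\infty})$, so that minimal extension on the subanalytic side is intertwined with minimal extension on the ind-sheaf side.

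For assertion (1), I would first apply the exact equivalence $I^\rmE_{Z^\an_\infty}$ to the given exact sequence in $\Perv(\CC_{Z_\infty}^\sub)$, obtaining an exact sequence $0\to I^\rmE_{Z^\an_\infty}(K)\to I^\rmE_{Z^\an_\infty}(L)$ in $\Perv(\I\CC_{Z_\infty})$ in case (i), and symmetrically $I^\rmE_{Z^\an_\infty}(K)\to I^\rmE_{Z^\an_\infty}(L)\to0$ in case (ii). Next I would invoke the counterpart of the present proposition for algebraic enhanced perverse ind-sheaves in \cite{Ito21}, which under the hypothesis $Z=U\cap W$ with $X\setminus U$ smooth asserts that ${}^p\bfE i_{Z^\an_\infty!!\ast}$ on $\Perv(\I\CC_{Z_\infty})$ carries $0\to\cdot\to\cdot$ (resp.\ $\cdot\to\cdot\to0$) to an exact sequence in $\Perv(\I\CC_{X_\infty})$. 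By the naturality in Proposition \ref{prop2.32}, the resulting sequence is identified with $0\to I^\rmE_{X^\an_\infty}({}^p\bfE i_{Z^\an_\infty!!\ast}K)\to I^\rmE_{X^\an_\infty}({}^p\bfE i_{Z^\an_\infty!!\ast}L)$, i.e.\ with $I^\rmE_{X^\an_\infty}$ applied to ${}^p\bfE i_{Z^\an_\infty!!\ast}K\to{}^p\bfE i_{Z^\an_\infty!!\ast}L$; since $I^\rmE_{X^\an_\infty}$ is an equivalence of abelian categories it reflects monomorphisms, so the original sequence in $\Perv(\CC_{X_\infty}^\sub)$ is exact. Case (ii) is identical, replacing "monomorphism'' by "epimorphism''.

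For assertion (2), given a simple object $S\in\Perv(\CC_{Z_\infty}^\sub)$, Proposition \ref{prop2.24} makes $I^\rmE_{Z^\an_\infty}(S)$ a simple object of $\Perv(\I\CC_{Z_\infty})$; the corresponding simplicity statement for minimal extensions of algebraic enhanced perverse ind-sheaves in \cite{Ito21} then shows that ${}^p\bfE i_{Z^\an_\infty!!\ast}I^\rmE_{Z^\an_\infty}(S)$ is simple in $\Perv(\I\CC_{X_\infty})$. By Proposition \ref{prop2.32} this object is isomorphic to $I^\rmE_{X^\an_\infty}({}^p\bfE i_{Z^\an_\infty!!\ast}S)$, and applying Proposition \ref{prop2.24} once more we conclude that ${}^p\bfE i_{Z^\an_\infty!!\ast}S$ is simple in $\Perv(\CC_{X_\infty}^\sub)$.

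The argument is essentially formal once Proposition \ref{prop2.32} is available, so the only genuine point of caution is to make sure that the cited statement of \cite{Ito21} is proved under precisely the hypothesis imposed here, namely $Z = U\cap W$ with $X\setminus U$ smooth. Should the direct citation not suffice, the fallback is a Beilinson--Bernstein--Deligne-type argument carried out directly for enhanced subanalytic sheaves: factor $i_{Z^\an_\infty}$ as an open immersion into $U^\an_\infty$ followed by the closed immersion $W^\an_\infty\hookrightarrow X^\an_\infty$, use Remark \ref{rem2.31} (2) to dispose of the closed factor, and for the open factor use the $t$-exactness, adjunction, and no-subobject/no-quotient-object facts recorded in Corollary \ref{cor2.22}, Remark \ref{rem2.23} and Proposition \ref{prop2.36} to see that minimal extension along an open immersion with smooth complement preserves monomorphisms and epimorphisms, hence simple objects. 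I expect this second route to be the main obstacle in case the reduction is not directly applicable, but in view of Proposition \ref{prop2.32} it should not actually be needed.
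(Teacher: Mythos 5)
Your proposal is correct and follows essentially the same route as the paper: the paper's proof likewise deduces the statement from the ind-sheaf counterpart \cite[Thm.\:3.47]{Ito21} by transporting through the equivalences of Theorem \ref{main-thm-3} (together with Proposition \ref{prop2.12}) and the compatibility of minimal extensions with $I^\rmE$ from Proposition \ref{prop2.32}. Your additional use of Proposition \ref{prop2.24} for the simplicity transfer is just an explicit form of the same reduction, and the fallback BBD-style argument is not needed.
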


\begin{proof}
This proposition follows from Theorem \ref{main-thm-3}, Propositions \ref{prop2.12}, \ref{prop2.32}
and \cite[Thm.\:3.47]{Ito21}.
\end{proof}

\end{document}